\tikzset{>=stealth}
\theoremstyle{plain}
\newtheorem{lemma}{Lemma}
\newtheorem{theorem}{Theorem}
\theoremstyle{definition}
\newtheorem{definition}{Definition}
\newtheorem{example}{Example}
\newtheorem{problem}{Problem}
\theoremstyle{remark}
\newcommand{\reals}{\mathbb{R}}
\newcommand{\ints}{\mathbb{Z}}
\newcommand{\nats}{\mathbb{N}}
\newcommand{\vect}[1]{\bm{#1}}
\DeclareMathOperator{\conv}{conv}
\DeclareMathOperator{\LB}{LB}
\DeclareMathOperator{\BQP}{BQP}
\DeclareMathOperator{\cost}{cost}
\newcommand{\QP}{\mathit{BQP}}
\let\leq\leqslant
\let\geq\geqslant
\let\eps\varepsilon
\title[Convex hulls for graphs of quadratic functions]{Convex hulls for graphs of quadratic functions with unit coefficients: even wheels and complete
  split graphs}
\author[Mitchell Harris]{Mitchell Harris$^{1,2}$}
\address{$^1$School of Mathematics and Physics, University of Queensland, Brisbane, QLD 4072, Australia}
\author[Thomas Kalinowski]{Thomas Kalinowski$^{2}$}
\address{$^2$School of Science and Technology, University of New England, Armidale, NSW 2351, Australia}
\email{m.g.harris@uq.edu.au}
\email{tkalinow@une.edu.au}
\date{}
\keywords{extended formulation, convex hull, bilinear, quadratic}
\subjclass[2010]{90C57, 90C26, 52B12}
\begin{document}

\begin{abstract}
  We study the convex hull of the graph of a quadratic function $f(\vect x)=\sum_{ij\in E}x_ix_j$,
  where the sum is over the edge set of a graph $G$ with vertex set $\{1,\dots,n\}$. Using an
  approach proposed by Gupte et al. (Discrete Optimization \textbf{36}, 2020, 100569), we
  investigate minimal extended formulations using additional variables $y_{ij}$, $1\leq i<j\leq n$,
  representing the products $x_ix_j$. The basic idea is to identify a set of facets of the
  Boolean Quadric Polytope which is sufficient for characterizing the convex hull for the given
  graph. Our main results are extended formulations for the cases that the underlying graph $G$ is
  either an even wheel or a complete split graph.
\end{abstract}

\maketitle

\section{Introduction}\label{sec:intro}
In global optimization, the construction of convex envelopes for nonconvex functions is a crucial
ingredient in state-of-the-art solvers. As a consequence, there has been a lot of interest in
characterizing the convex hulls of graphs of nonlinear functions; see~\cite{Locatelli2013} for a
comprehensive treatment. The convex hull might be a polytope with a prohibitively large number of
facets, and in \cite{Gupte2020} it has been proposed to use extended formulations to obtain more
compact representations of such convex hulls. The basic idea is to represent the polytope as a
projection of a higher dimensional polytope with significantly fewer facets, an approach that has
been highly successful in combinatorial optimization~\cite{Conforti2010}. In this paper, we focus on
functions $f : [0,1]^n \to \reals$ of the form
\[f(\vect{x}) = \sum_{ij\in E} x_ix_j\]
where $G=(V,E)$ is a graph with vertex set
$V=[n]:=\{1,2,\dots,n\}$. The convex hull of the graph of $f$ is the set
\[X(f) := \conv \{ (\vect{x},z) \in [0,1]^n \times \reals : \ z = f(\vect{x}) \}.\]
The set $X(f)$ is a polytope, as it is the convex hull of the $2^n$ points $(\vect x,z)$ with $\vect
x\in\{0,1\}^n$ (see \cite{Rikun1997,Sherali1997}). A natural
setting for an extended formulation is to introduce additional variables $y_{ij}$ representing the
products $x_{i}x_{j}$ of two original variables. For $x_i,x_j\in\{0,1\}$, the equation
$y_{ij}=x_ix_j$ is equivalent to the classical McCormick inequalities~\cite{McCormick1976}:
$y_{ij}\geq 0$, $y_{ij}\leq x_i$, $y_{ij} \leq x_j$, and $x_i+x_j-y_{ij}\leq 1$. The \emph{Boolean
  quadric polytope}
\begin{multline*}
  \QP_n = \conv \{ (\vect{x}, \vect{y}) \in \{ 0, 1 \}^{n(n+1)/2}\,:\\
  y_{ij}\geq 0,\,y_{ij}\leq x_i,\,y_{ij}\leq x_j,\,x_i+x_j-y_{ij}\leq 1 \text{ for all } 1 \leq i < j \leq n
   \}
\end{multline*}
was introduced by Padberg~\cite{Padberg1989} in the context of quadratic $0$-$1$-programming, and
its structure has been extensively studied~\cite{Burer2009,Boros1992,Boros1993,Yajima1998}. In our
setting, $\QP_n$ provides an extended formulation for $X(f)$ in the sense that $X(f)=\pi[f](\QP_n)$,
where the map $\pi[f]:[0,1]^{n(n+1)/2}\to[0,1]^n\times\reals$ is defined by
\[\pi[f](\vect{x},\vect{y})=\left(\vect{x},\sum_{ij\in E}y_{ij}\right).\]
As the number of facets of $\QP_n$ is exponential in $n$, the complete list of facets is
known only for small values of $n$, and some of the known facets are NP-hard to
separate~\cite{Padberg1989,Deza1997,Letchford2014,Barahona1986}, the question arises if there exists
a relaxation $P$ of $\BQP_n$, obtained by selecting certain families of facets, such that $\pi[f](P)=X(f)$.
Let $M_n$ be the relaxation obtained by just keeping the McCormick inequalities: 
\[M_n = \left \{ (\vect{x}, \vect{y}) \in [0, 1]^{n(n+1)/2}\,:\,y_{ij}\geq 0,\,y_{ij}\leq
    x_i,\,y_{ij}\leq x_j,\,x_i+x_j-y_{ij}\leq 1 \text{ for all } 1 \leq i < j \leq n \right \}.\]
Then $\pi[f](M_n)\supseteq X(f)$ with equality if and only if $G$ is
bipartite~\cite{Guenluek2012,Luedtke2012}, which has been generalized to functions $\sum_{ij}a_{ij}x_ix_j$ with
arbitrary coefficients $a_{ij}\in\reals$ in~\cite{Misener2015,Boland2017}. Padberg also studied variants of these
polytopes where the variables $y_{ij}$ are only introduced for $ij\in E$:
\begin{align*}
    \QP_n(G) &= \conv \{ (\vect{x}, \vect{y}) \in \{ 0, 1 \}^{n+\lvert E\rvert}\,:\,y_{ij}\geq 0,\,y_{ij}\leq x_i,\,y_{ij}\leq x_j,\,x_i+x_j-y_{ij}\leq 1 \text{ for all
                                                                                       } ij\in
                                                                                       E\},\\
  M_n(G) &= \{ (\vect{x}, \vect{y}) \in [0, 1]^{n+\lvert E\rvert}\,:\,y_{ij}\geq
    0,\,y_{ij}\leq x_i,\,y_{ij}\leq x_j,\,x_i+x_j-y_{ij}\leq 1 \text{ for all } ij\in E\},
\end{align*}
for which we also have $\pi[f](M_n(G))\supseteq\pi[f](\QP_n(G))=X(f)$. In general, the McCormick
relaxation can be quite weak~\cite{Boland2017}, and in~\cite{Gupte2020} the problem was raised to
find relaxations $P$ with $\pi[f](P)=X(f)$. This can be viewed as a weak version of the problem of
characterizing the facets of $\QP_n(G)$ for certain classes of graphs $G$ which has been studied
extensively~\cite{Padberg1989,Simone1990,Michini2018,Boros1992,Bonami2018}. As $\pi[f](P)=X(f)$ is a
weaker condition than $P=\QP_n(G)$, we hope to need fewer facets to achieve this. It is easy to
check (and has been explicitly proved in~\cite{Meyer2005}) that for every $\vect x\in[0,1]^n$,
\begin{multline*}
  \max\{z\,:\,(\vect x,z)\in X(f)\}=\sum_{ij\in E}\min\{x_i,x_j\}\\
  =\max\left\{\sum_{ij\in
      E}y_{ij}\,:\,y_{ij}\leq x_i,\,y_{ij}\leq x_j\text{ for all }1\leq i<j\leq n\right\}.
\end{multline*}
In other words, the McCormick inequalities are sufficient for characterizing the upper boundary of
$X(f)$, and we can focus on the lower boundary. For a polytope $P\subseteq[0,1]^{n+\lvert E\rvert}$
or $P\subseteq[0,1]^{n(n+1)/2}$, let
\[\LB_P(\vect x)=\min\left\{\sum_{ij\in E}y_{ij}\,:\,(\vect x,\,\vect y)\in P\right\}.\]
Then $\pi[f](P)=X(f)$ if and only if, for every $\vect x\in[0,1]$,
\[\{z\,:\,(\vect x,z)\in X(f)\}=\left[\LB_P(\vect x),\,\sum_{ij\in E}\min\{x_i,x_j\}\right].\]
Combining this observation with a technique from~\cite{Zuckerberg2016}, a characterization of
polytopes $P$ with $\pi[f](P)=X(f)$ has been proved in~\cite{Gupte2020}, which is based on certain
subsets of the half-open unit interval $[0,1)$. To state this criterion, we define $\mathcal L$ to
be the collection of unions of finitely many half-open intervals and $\mu$ the Lebesgue measure
(restricted to $\mathcal L$), that is,
  \begin{align*}
    \mathcal L &= \left\{[a_1,b_1)\cup\dots\cup[a_k,b_k)\ :\ 0\leq a_1<b_1<a_2<b_2<\dotsb<a_{k}<b_k\leq 1,\
        k\in\nats\right\},\\
    \mu(X) &= (b_1-a_1)+\dots+(b_k-a_k)\qquad\qquad \text{for }X=[a_1,b_1)\cup\dotsb\cup[a_k,b_k)\in\mathcal L.
  \end{align*} 
\begin{theorem}[\cite{Gupte2020}]\label{thm:characterisation}
  Let $P\subseteq[0,1]^{n+\lvert E\rvert}$ or $P\subseteq[0,1]^{n(n+1)/2}$ be a polytope satisfying
  the following two conditions:
  \begin{enumerate}
  \item For all $(\vect x,\vect y)\in P$ and all $ij\in E$, $y_{ij}\leq\min\{x_i,x_i\}$.
  \item For all $\vect x\in[0,1]^n$, there exists $(\vect x,\vect y)\in P$ with
    $y_{ij}=\min\{x_i,x_j\}$ for all $ij\in E$.    
  \end{enumerate}
  Then $\pi[f](P)=X(f)$ if and only if, for every $\vect x\in[0,1]^n$
  there exist sets $X_1,\dots,X_n\in\mathcal L$ such that $\mu(X_i)=x_i$ for all $i\in[n]$, and
  $\LB_P(\vect x)\geq\sum_{ij\in E}\mu(X_i\cap X_j)$.
\end{theorem}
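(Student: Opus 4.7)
The plan is to reduce the statement to a pointwise comparison between $\LB_P(\vect x)$ and the lower envelope of $f$ on $[0,1]^n$, and then unpack that envelope via the measure-theoretic representation from~\cite{Zuckerberg2016}.

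First, I would use conditions~(1) and~(2), together with the convexity of $P$, to identify the fibres of $\pi[f](P)$. For fixed $\vect x$, the set $\{\vect y:(\vect x,\vect y)\in P\}$ is a (nonempty, by~(2)) polytope, and its image under the linear functional $\sum_{ij\in E}y_{ij}$ is a closed interval. Condition~(1) supplies the upper bound $\sum_{ij\in E}\min\{x_i,x_j\}$, condition~(2) says this bound is attained, and $\LB_P(\vect x)$ is by definition the lower endpoint. Combined with the identity $\{z:(\vect x,z)\in X(f)\}=[\vex f(\vect x),\sum_{ij\in E}\min\{x_i,x_j\}]$ recorded in the introduction, the equality $\pi[f](P)=X(f)$ is equivalent to $\LB_P(\vect x)=\vex f(\vect x)$ for every $\vect x\in[0,1]^n$. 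Applying~(2) at binary points shows that $P$ contains the vertex set of $\BQP_n(G)$, so $\pi[f](P)\supseteq X(f)$ and hence $\LB_P(\vect x)\leq\vex f(\vect x)$ is automatic; the content of the theorem is therefore the reverse inequality.

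Second, the key lemma is the Zuckerberg identity
\[\vex f(\vect x)=\min\left\{\sum_{ij\in E}\mu(X_i\cap X_j)\,:\,X_1,\dots,X_n\in\mathcal L,\ \mu(X_i)=x_i\right\}.\]
This connects convex combinations $\vect x=\sum_k\lambda_k\vect v^k$ with $\vect v^k\in\{0,1\}^n$ to interval systems in $[0,1)$. Given $X_1,\dots,X_n\in\mathcal L$ with $\mu(X_i)=x_i$, refine the endpoints of all constituent half-open intervals to partition $[0,1)$ into pieces $I_k$ on each of which every indicator $\mathbf 1_{X_i}$ is constant, say equal to $v^k_i\in\{0,1\}$; setting $\lambda_k=\mu(I_k)$ yields $\vect x=\sum_k\lambda_k\vect v^k$ and $\sum_{ij\in E}\mu(X_i\cap X_j)=\sum_k\lambda_k f(\vect v^k)\geq\vex f(\vect x)$. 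Conversely, from a convex representation $\vect x=\sum_k\lambda_k\vect v^k$, place consecutive intervals $I_k$ of length $\lambda_k$ on $[0,1)$ and set $X_i=\bigcup_{k:\,v^k_i=1}I_k\in\mathcal L$; then $\mu(X_i)=x_i$ and $\sum_{ij\in E}\mu(X_i\cap X_j)=\sum_k\lambda_k f(\vect v^k)$. Taking the minimum on each side yields the identity.

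Combining the two paragraphs, $\LB_P(\vect x)\geq\vex f(\vect x)$ holds if and only if there exist sets $X_1,\dots,X_n\in\mathcal L$ with $\mu(X_i)=x_i$ and $\sum_{ij\in E}\mu(X_i\cap X_j)\leq\LB_P(\vect x)$, which is exactly the criterion in the theorem. The main obstacle is the Zuckerberg identity: the other steps are routine once one recognises the fibres of $\pi[f]$ as closed intervals and uses condition~(2) to embed all binary points of $X(f)$ inside $\pi[f](P)$. Since the identity is the core tool in~\cite{Gupte2020}, the proof essentially amounts to a careful rephrasing of that construction in the terminology used here.
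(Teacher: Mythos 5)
The paper does not prove Theorem~\ref{thm:characterisation} at all --- it is quoted from \cite{Gupte2020} and only illustrated by Example~\ref{ex:characterisation_theorem} --- so there is no in-paper argument to compare against; judged on its own, your reconstruction is correct and follows exactly the route the surrounding text signals (reduce $\pi[f](P)=X(f)$ to the pointwise identity $\LB_P(\vect x)=\vex f(\vect x)$ using conditions (1)--(2) and convexity, then convert convex combinations of binary points into systems of sets in $\mathcal L$ via the Zuckerberg correspondence). This matches the approach of the cited proof, so nothing further is needed.
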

We illustrate Theorem~\ref{thm:characterisation} in the following example.
\begin{example}\label{ex:characterisation_theorem}
  For $n=3$ and $f(x_1,x_2,x_3)=x_1x_2+x_2x_3+x_1x_3$, the graph $G$ is a triangle. Suppose $P=M_3$
  is the polytope defined by the McCormick inequalities. Then $(\vect x,\vect y)\in P$ where
  $\vect x=(1/2,\,1/2,\,1/2)$ and $\vect y=\vect 0$, hence $\LB_P(\vect x)=0$. By
  Theorem~\ref{thm:characterisation}, if $\pi[f](P)= X(f)$, then there are three pairwise disjoint
  subsets of $[0,1)$, each of them having measure $1/2$. This is impossible, and we conclude that
  $P$ is too weak. To obtain a polytope $P$ with $\pi[f](P)=X(f)$ it is sufficient to add the triangle
  inequality $y_{12}+y_{23}+y_{13}\geq x_1+x_2+x_3-1$. This can be easily shown using the
  characterization in Theorem~\ref{thm:characterisation}: pick $\vect x\in[0,1)^3$ and write down
  corresponding sets $X_1$, $X_2$ and $X_3$. Without loss of generality, $x_1\geq x_2\geq x_3$.
  \begin{itemize}
  \item If $x_1+x_2+x_3\leq 1$, then
    \begin{align*}
      X_1&=[0,x_1), & X_2 &= [x_1,x_1+x_2), & X_3 &= [x_1+x_2,x_1+x_2+x_3),
    \end{align*}
    and for every $\vect y$ with $(\vect x,\vect y)\in P$,
    \[\mu(X_1\cap X_2)+\mu(X_2\cap X_3)+\mu(X_1\cap X_3)=0\leq y_{12}+y_{23}+y_{13}.\]
  \item If $x_1+x_2\leq 1<x_1+x_2+x_3$ then
    \begin{align*}
      X_1 &= [0,x_1), & X_2 &= [x_1,x_1+x_2), & X_3 &= [x_1+x_2,1)\cup[0,x_1+x_2+x_3-1),
    \end{align*}
    and for every $\vect y$ with $(\vect x,\vect y)\in P$,
    \[\mu(X_1\cap X_2)+\mu(X_2\cap X_3)+\mu(X_1\cap X_3)=x_1+x_2+x_3-1\leq y_{12}+y_{23}+y_{13}\]
    due to the triangle inequality.
  \item If $x_1+x_2>1$ and $x_1+x_2+x_3\leq 2$ then
    \begin{align*}
      X_1 &= [0,x_1), & X_2 &= [x_1,1)\cup[0,x_1+x_2-1), & X_3 &= [x_1+x_2-1,x_1+x_2+x_3-1),
    \end{align*}
    and for every $\vect y$ with $(\vect x,\vect y)\in P$,
    \[\mu(X_1\cap X_2)+\mu(X_2\cap X_3)+\mu(X_1\cap X_3)=x_1+x_2+x_3-1\leq y_{12}+y_{23}+y_{13}\]
    due to the triangle inequality.
  \item If $x_1+x_2+x_3>2$ then
    \begin{align*}
      X_1 &= [0,x_1), & X_2 &= [x_1,1)\cup[0,x_1+x_2-1), & X_3 &= [x_1+x_2-1,1)\cup[0,x_1+x_2+x_3-2), 
    \end{align*}
    and for every $\vect y$ with $(\vect x,\vect y)\in P$,
    \begin{multline*}
      \mu(X_1\cap X_2)+\mu(X_2\cap X_3)+\mu(X_1\cap X_3)=1+2(x_1+x_2+x_3-2)\\
      =(x_1+x_2-1)+(x_2+x_3-1)+(x_1+x_3-1)\leq y_{12}+y_{23}+y_{13}
    \end{multline*}
    due to the McCormick inequalities.
  \end{itemize}
\end{example}

One of the classes of facets for $\QP_n$ discussed in~\cite{Padberg1989} is described by the \emph{triangle inequalities}
\begin{align*}
  x_i + x_j + x_k - y_{ij} - y_{ik} - y_{jk} & \leq 1, \\
  - x_i + y_{ij} + y_{ik} - y_{jk} & \leq 0, \\
  - x_j + y_{ij} - y_{ik} + y_{jk} & \leq 0,  \\
  - x_k - y_{ij} + y_{ik} + y_{jk} & \leq 0.
\end{align*}
The strength of the relaxation obtained from the McCormick inequalities together with the triangle
inequalities has been studied both theoretically~\cite{Boros1992} and
computationally~\cite{Bonami2018,Anstreicher2012}. We focus on the first of the triangle inequalities.
\begin{definition}\label{def:triangle_relaxation}
  For a graph $G=(V,E)$ let $T(G)\subseteq[0,1]^{\lvert V\rvert+\lvert E\rvert}$ be the polytope
  determined by the McCormick inequalities for each $ij\in E$ together with the inequalities
  $y_{ij}+y_{jk}+y_{jk}\geq x_i+x_j+x_k-1$ for every triangle $ijk$ in $G$.
\end{definition}
As mentioned above, the McCormick inequalities are sufficient for bipartite graphs, and it would be
interesting to characterize the graphs $G$ such that for the corresponding function $f$,
$\pi[f](T(G))=X(f)$. Our first result provides a first step in this direction by showing that even
wheels belong to this class. Let $W_{n-1}$ be the graph with vertex set $V=[n]$ and edge set
$E=E_0\cup E_1$ where
\begin{align*}
  E_0 &= \{ij\,:\,i\in[n-1],\,j\equiv i+1\pmod{n-1}\}, & E_1 &= \{in\,:\,i\in[n-1]\}.
\end{align*}
\begin{theorem}\label{thm:even_wheels}
  Suppose $n-1$ is even, and let $P=T(W_{n-1})\subseteq[0,1]^{3n-2}$. Then $\pi[f](P)=X(f)$. 
\end{theorem}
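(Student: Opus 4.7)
The plan is to apply Theorem~\ref{thm:characterisation}. Conditions (1) and (2) of that theorem are immediate: (1) is the McCormick upper bound built into the definition of $T(W_{n-1})$, and (2) follows by setting $y_{ij}=\min\{x_i,x_j\}$; the triangle inequality then reduces, after WLOG sorting $x_i\leq x_j\leq x_k$, to $2x_i+x_j\geq x_i+x_j+x_k-1$, i.e.\ $x_k\leq 1+x_i$, which holds because $x_k\leq 1$.

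For condition (3) I fix $X_n=[0,x_n)$ and decompose each rim set $X_i$ (for $i\in[n-1]$) as $X_i=T_i\cup B_i$ with $T_i\subseteq[0,x_n)$ of measure $a_i\in[\max\{0,x_i+x_n-1\},\min\{x_i,x_n\}]$ and $B_i\subseteq[x_n,1)$ of measure $x_i-a_i$. Because the rim cycle $C_{n-1}$ has even length, it is bipartite with color classes $A=\{1,3,\dots,n-2\}$ and $B=\{2,4,\dots,n-1\}$, and the bipartite tightness of the McCormick inequalities lets me, for any choice of $\vect a$, place the $T_i$ in $[0,x_n)$ and the $B_i$ in $[x_n,1)$ so that $\mu(T_i\cap T_j)=\max\{0,a_i+a_j-x_n\}$ and $\mu(B_i\cap B_j)=\max\{0,(x_i-a_i)+(x_j-a_j)-(1-x_n)\}$ pointwise on each cycle edge $ij\in E_0$. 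Under such a placement the total intersection sum equals
\[
F(\vect a)=\sum_{i=1}^{n-1} a_i+\sum_{ij\in E_0}\bigl[\max\{0,a_i+a_j-x_n\}+\max\{0,x_i+x_j-a_i-a_j-(1-x_n)\}\bigr],
\]
and the task reduces to showing $\min_{\vect a}F(\vect a)\leq\LB_P(\vect x)$.

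Introducing auxiliary variables $Y_{ij}$ bounded below by the two max-terms on each rim edge turns the minimization of $F$ into an LP in $(s_i):=(a_i)$ and $(Y_{ij})$ whose constraints are exactly those of the LP defining $\LB_P(\vect x)$ (identifying $s_i=y_{in}$ and $Y_{ij}=y_{ij}$) together with the extra rim-edge inequalities $Y_{ij}\geq s_i+s_j-x_n$. These extras are precisely Padberg's fourth triangle facets $y_{in}+y_{jn}\leq x_n+y_{ij}$, which do not belong to $T(W_{n-1})$. Hence $\min_{\vect a}F(\vect a)\geq\LB_P(\vect x)$ holds trivially, and the theorem follows once I prove the remaining claim: every optimal solution $\vect y^*$ of the LP for $\LB_P(\vect x)$ already satisfies $y^*_{ij}\geq y^*_{in}+y^*_{jn}-x_n$ for each rim edge $ij\in E_0$.

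The main obstacle, and the point at which the parity of $n-1$ is used, is the proof of this last claim. I plan to argue by contradiction using complementary slackness: a violation at some rim edge $(i,i+1)$ would force both $y^*_{in}$ and $y^*_{i+1,n}$ strictly above their McCormick lower bounds, so at each of the two spokes one of the incident wheel triangle inequalities must be tight. Propagating these tight triangles around the rim cycle, together with the McCormick rim bounds $y_{i,i+1}\geq x_i+x_{i+1}-1$, produces a system of equalities that can be closed consistently only when the rim cycle has even length, which yields the contradiction. This parity-sensitive step mirrors the well-known failure of the McCormick relaxation on odd cycles.
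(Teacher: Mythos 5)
Your setup is sound and in fact runs parallel to the paper's own construction: conditions (1) and (2) of Theorem~\ref{thm:characterisation} are verified correctly, fixing $X_n=[0,x_n)$ and placing the traces $T_i\subseteq[0,x_n)$ and $B_i\subseteq[x_n,1)$ left-aligned for one colour class of the even rim cycle and right-aligned for the other is exactly the family of configurations the paper uses in the ``$(ii)\Rightarrow(i)$'' direction of Lemma~\ref{lem:equivalence} (with $z_i=x_i-a_i$), and your identification of the linearised $F$-LP as the $\LB_P$-LP augmented by the inequalities $y_{ij}\geq y_{in}+y_{jn}-x_n$ on rim edges is correct. The problem is that this reduction does not decrease the difficulty at all: the ``remaining claim'' that some optimal solution of the $\LB_P$-LP satisfies these extra inequalities is \emph{equivalent} to the theorem (if $\pi[f](P)=X(f)$ then the $\QP_n(G)$-optimal point is a $P$-optimal point satisfying all Padberg inequalities, and conversely your argument shows the claim implies the theorem). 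Everything hard about Theorem~\ref{thm:even_wheels} is therefore concentrated in the one step for which you offer only a two-sentence plan.

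That plan, as stated, does not work. First, you assert the claim for \emph{every} optimal solution, which is stronger than needed and dubious: the LP typically has a face of optima (one can trade $\delta$ between a spoke variable $y_{in}$ and one adjacent rim variable whose triangle constraint is tight above its McCormick bound without changing the objective), so an arbitrary optimum is not a safe object to run complementary slackness on. Second, the propagation itself stalls immediately: a violation at rim edge $(i,i+1)$ does force $y^*_{in}$ and $y^*_{i+1,n}$ strictly above their McCormick lower bounds and hence forces, for each of the two spokes, one of its two incident triangles to be tight; but if the tight triangle at spoke $i$ is the one on edge $(i-1,i)$, this gives no information about $y^*_{i-1,n}$ (which may simply sit at its own lower bound), so there is nothing to propagate to the next spoke, let alone a chain of equalities that closes around the cycle and detects parity. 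Compare with what the paper actually has to do at exactly this point: it introduces the dual bounds $\phi(T)$ over alternating triangle covers $T$, reduces feasibility of the required configuration to the linear system (\ref{ineq:1})--(\ref{ineq:6}), and then applies Farkas' lemma and a negative-cycle analysis in an auxiliary network, with a full case analysis in Appendix~\ref{app:proof_details}. Your proposal needs a replacement for all of that; as written it is a correct reformulation of the theorem followed by an unsupported assertion.
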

Since the wheel graphs have tree-width 3, it is known that $\QP_n(W_{n-1})$ has an extended
formulation with $O(n)$ variables and constraints~\cite{Laurent2009}, but it is still interesting to
have an explicit construction in terms of the natural variables corresponding to vertices and edges.  

Our second result is a generalization of Theorem 1 from~\cite{Gupte2020}, which dealt with the case
that $G$ is a complete graph with one edge removed. We extend this result to the class of complete
split graphs. Let $n=n_1+n_2$, and let $G=(V,E)$ be the graph with vertex set
$V=[n]$, where $V_1=[n_1]$ is a clique, $V_2=V\setminus V_1$ is an independent set, and every vertex
of $V_1$ is adjacent to every vertex of $V_2$, that is,
\[E=\{ij\,:\,1\leq i<j\leq n_1\}\cup\{ij\,:\,1\leq i\leq n_1,\,n_1+1\leq j\leq n\}.\]

For convenience, we denote by $E^*(W)$ for a vertex set
$W\subseteq V$ the set of all pairs of vertices in $W$ (edges and non-edges), that is,
\[E^*(W)=\{ij\,:\,i,j\in W,\,i<j\}.\] We also use the notation $x(W)=\sum_{i\in W}x_i$ and
$y(F)=\sum_{ij\in F}y_{ij}$ for sets $W\subseteq V$ and $F\subseteq E^*(V)$. The clique inequalities
\[y(E^*(W))\geq\alpha x(W)-\binom{\alpha+1}{2}\]
were also introduced in\cite{Padberg1989}. They are valid inequalities for every $W\subseteq V$, $\lvert W\rvert\geq 2$ and every
$\alpha\in\{1,2,\dots,\vert W\rvert-1\}$, and facet-defining for $\lvert W\rvert\geq 3$ and
$1\leq\alpha\leq\lvert W\rvert-2$. For the complete split graphs we need a certain subset of these
split inequalities.
\begin{theorem}\label{thm:complete_split_graphs}
  If $f$ corresponds to a complete split graph with clique $V_1$ and independent set $V_2$, then
  $\pi[f](P)=X(f)$ where $P$ is the polytope described by
  \begin{align*}
  y_{ij} &\geq x_i+x_j-1 && i\in V_1,\,j\in V_2,\\
  y_{ij} &\leq \min\{x_i,\,x_j\} && 1\leq i<j\leq n,\\
y(E^*(V_1\cup S)) &\geq\alpha x(V_1\cup S)-\binom{\alpha+1}{2} && S\subseteq V_2,\,0\leq\lvert S\rvert\leq n_1-1,\,1\leq
                                                        \alpha\leq n_1-1.
  \end{align*}
\end{theorem}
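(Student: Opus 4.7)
\emph{Plan.} The proof applies Theorem~\ref{thm:characterisation}. Condition~(1), $y_{ij} \leq \min\{x_i, x_j\}$ for all $(\vect x,\vect y) \in P$ and $ij \in E$, is built into the definition of $P$. For condition~(2), setting $y_{ij} = \min\{x_i, x_j\}$ satisfies the bipartite McCormick bound $y_{ij} \geq x_i + x_j - 1$ trivially; each clique inequality for $W = V_1 \cup S$ follows from the identity $\sum_{i<j}\min\{x_{v_i},x_{v_j}\} = \sum_{j=1}^m (j-1)\, x_{v_j}$, obtained after sorting $W$ as $v_1, \dots, v_m$ with $x_{v_1} \geq \cdots \geq x_{v_m}$; the inequality then reduces to $\sum_j (j-1-\alpha)\, x_{v_j} \geq -\binom{\alpha+1}{2}$, whose LHS is minimized (over sorted $[0,1]^m$ vectors) at the $0/1$-vector with $\alpha$ leading ones.

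The core of the argument is condition~(3): for each $\vect x \in [0,1]^n$, construct $X_1, \dots, X_n \in \mathcal L$ with $\mu(X_i) = x_i$ and $\LB_P(\vect x) \geq \sum_{ij \in E} \mu(X_i \cap X_j)$. Sort $V_1$ so $x_1 \geq \cdots \geq x_{n_1}$ and $V_2$ so $x_{n_1+1} \geq \cdots \geq x_n$, and write $T = x(V_1)$, $\alpha_0 = \lfloor T \rfloor$. A first attempt is the familiar wrap-around on the clique: place half-open intervals $[s_{i-1}, s_i)$ with $s_i = \sum_{k \leq i} x_k$ on the circle $[0, 1)$, yielding $\sum_{ij \in E^*(V_1)} \mu(X_i \cap X_j) = \alpha_0 T - \binom{\alpha_0+1}{2}$ (matched by the pure clique inequality), together with a McCormick-matching placement of each $X_j$. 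This can, however, overshoot $\LB_P(\vect x)$: for $n_1 = 2$, $|V_2| = 2$ and $\vect x = (0.6, 0.6, 0.5, 0.5)$, direct LP computation gives $\LB_P = 0.9$, while any wrap-around/McCormick-matching placement forces the overlap sum to $1.2$.

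The source of the gap is that mixed clique inequalities with $S \neq \emptyset$ can strictly tighten the LP. The remedy is to \emph{inflate} the pairwise overlaps inside $V_1$ beyond the wrap-around value, enlarging the low-multiplicity region of $[0,1)$ so that the sets $X_j$ can be placed at lower cost. In the example above, $X_1 = [0, 0.6)$, $X_2 = [0.1, 0.7)$, $X_3 = X_4 = [0, 0.1) \cup [0.6, 1)$ gives overlap sum $0.9 = \LB_P(\vect x)$ and realizes the binding mixed clique inequalities $y_{12} + y_{1j} + y_{2j} \geq x_1 + x_2 + x_j - 1$ ($j = 3, 4$) with equality. In general, I plan to guide the construction by LP duality: an optimal dual of the LP defining $\LB_P(\vect x)$ will assign positive weight to certain mixed clique inequalities (indexed by subsets $S \subseteq V_2$ with parameters $\alpha$) and to certain McCormick bounds on $V_1 \times V_2$ edges, and this combinatorial data will dictate how to carve up $[0,1)$ and realise each tight constraint geometrically.

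The main obstacle is carrying out this construction uniformly. The optimal dual depends on $\vect x$ in a piecewise fashion, and because all mixed clique inequalities share the variables $y_{ij}$ for $i, j \in V_1$, adjusting the clique's internal overlap to make one constraint tight can disrupt the tightness of others. I therefore expect the proof to proceed via a case analysis on $\lfloor T \rfloor$ and on the number of $x_j$ ($j \in V_2$) exceeding various thresholds, possibly combined with an induction on $|V_2|$ that introduces the independent-set vertices one at a time and readjusts the placement of the clique sets at each step.
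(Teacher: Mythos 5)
Your setup is right and your diagnosis of the difficulty is accurate: conditions (1) and (2) of Theorem~\ref{thm:characterisation} are routine, the naive wrap-around on the clique followed by McCormick-matching placement of the $V_2$ sets overshoots $\LB_P(\vect x)$ (your $(0.6,0.6,0.5,0.5)$ example is correct), and the mixed clique inequalities with $S\neq\emptyset$ are exactly what closes the gap. But the proposal stops at the point where the proof actually begins. Everything after ``the remedy is to inflate the pairwise overlaps'' is a statement of intent --- ``I plan to guide the construction by LP duality,'' ``I expect the proof to proceed via a case analysis \dots possibly combined with an induction'' --- rather than a construction. The entire content of the theorem is the uniform construction of the sets $X_i$ together with an explicit dual certificate matching their overlap sum, and neither is supplied. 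In particular, the obstacle you name yourself (tightening one mixed clique inequality by enlarging the internal clique overlaps can break the tightness of another, since they all share the variables $y_{ij}$, $i,j\in V_1$) is left unresolved, and it is not clear that a case analysis on $\lfloor x(V_1)\rfloor$ or an induction on $\lvert V_2\rvert$ would resolve it.

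The paper's resolution is worth contrasting with your plan, because it sidesteps that obstacle entirely. First, the placement order is reversed: the sets $X_j=[0,x_j)$ for $j\in V_2$ are fixed as nested initial intervals, and then the clique sets $X_i$, $i\in V_1$, are placed one at a time, each chosen to minimize its total overlap with \emph{everything} already placed (both the $V_2$ sets and the earlier clique sets); this single greedy pass automatically produces the ``inflated'' clique overlaps you observed are necessary. Second, the certificate is not an optimal LP dual assembled from many tight constraints: the paper extracts from the greedy run a single subset $S\subseteq V_2$ with $\lvert S\rvert\leq n_1-1$ such that (a) the family $\{X_i: i\in V_1\cup S\}$ is balanced, i.e.\ every point of $[0,1)$ is covered either $\alpha$ or $\alpha+1$ times with $\alpha=\lfloor x(V_1\cup S)\rfloor$ --- which is precisely the equality case of one clique inequality~(\ref{eq:clique}) --- and (b) $\mu(X_i\cap X_j)=\max\{0,x_i+x_j-1\}$ for every $i\in V_1$ and $j\in V_2\setminus S$. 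The overlap sum is then bounded by $y(E)$ using that one clique inequality, the McCormick upper bounds on $E^*(S)$ to discard the non-edges inside $S$, and the McCormick lower bounds~(\ref{eq:McCormick_LB}) for the remaining edges; no case analysis on $\lfloor x(V_1)\rfloor$ and no induction on $\lvert V_2\rvert$ is needed. (There is also a preliminary reduction, Lemma~\ref{lem:interior_of_cube}, to $\vect x$ in the open cube, which your plan omits.) To complete your proof you would need to supply, at minimum, an explicit rule for the placement of all $n$ sets and a proof that some admissible combination of the constraints defining $P$ meets the resulting overlap sum exactly; as written, the proposal establishes neither.
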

The proofs of Theorems~\ref{thm:even_wheels} and~\ref{thm:complete_split_graphs} are contained in
Sections \ref{sec:wheels} and~\ref{sec:split_graphs}, respectively. In both cases, the the sets
$X_i$ are constructed greedily: The sets are specified one-by-one, and when fixing $X_i$ we aim to
minimize its intersection with the sets $X_j$, $ij\in E$, which are already fixed. This is a natural
strategy, but proving $\LB_P(\vect x)\geq\sum_{ij\in E}\mu(X_i\cap X_j)$ requires a surprising
amount of work. In Section~\ref{sec:conclusion}, we conclude the paper by stating a couple of open
problems.

\section{Even wheels: Proof of Theorem~\ref{thm:even_wheels}}\label{sec:wheels}
In this section, indices $i$ and $j$ are from the set $[n-1]$, and indices like $i+1$ or $j+1$ are
from $[n-1]$ as well, and have to be interpreted modulo $n-1$. Fix an arbitrary
$\vect x\in[0,1]^n$. In view of Theorem~\ref{thm:characterisation}, all we need to do is to find
sets $X_i\subseteq[0,1]$ with $\mu(X_i)=x_i$ and
\[\LB_P(\vect x)\geq\sum_{i=1}^{n-1}\left(\mu(X_i\cap X_{i+1})+\mu(X_i\cap X_n)\right).\]
As a first step we write down a lower bound for $\LB_P(\vect x)$. For a subset $T\subseteq[n-1]$, we
define
\begin{multline*}
  \phi(T)=\sum_{i\in[n-1]\setminus
    T}\max\{0,x_i+x_{i+1}-1\}+\sum_{i\in[n-1]\setminus(T\cup(T+1))}\max\{0,x_i+x_n-1\}\\ +\sum_{i\in T}\max\{0,x_i+x_{i+1}+x_n-1\},
\end{multline*}
and we set $\Phi^*=\max\left\{\phi(T)\,:\,T\subseteq[n-1],\,T\cap(T+1)=\emptyset\right\}$. Subsets
$T\subseteq[n-1]$ with $T\cap(T+1)=\emptyset$ can be identified with certain feasible solutions for
the dual to the linear program defining $\LB_P(\vect x)$, such that $\phi(T)$ is the dual objective
value. As a consequence, $\Phi^*$ is a lower bound for $\LB_P(\vect x)$, and this is the content of
the following lemma. 
\begin{lemma}\label{lem:alternating_triangles}
  $\LB_P(\vect x)\geq\Phi^*$.
\end{lemma}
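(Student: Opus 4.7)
The plan is to prove the bound via linear programming duality. For fixed $\vect x$, $\LB_P(\vect x)$ is the optimum of an LP in the variables $\vect y$, so by weak duality it dominates the objective of any feasible dual solution. I expect the subsets $T\subseteq[n-1]$ with $T\cap(T+1)=\emptyset$ to parametrise a family of dual-feasible solutions whose objective value is exactly $\phi(T)$.

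First I would write out the primal LP: minimise $\sum_{i\in[n-1]}y_{i,i+1}+\sum_{i\in[n-1]}y_{i,n}$ subject to $y_e\geq 0$ and $y_{ij}\geq x_i+x_j-1$ for each edge $e=ij$ (the lower McCormick constraints), together with $y_{i,i+1}+y_{i+1,n}+y_{i,n}\geq x_i+x_{i+1}+x_n-1$ for every triangle $\{i,i+1,n\}$. The upper McCormick bounds $y_{ij}\leq\min\{x_i,x_j\}$ can be dropped because they are never tight in a minimisation. Introducing dual multipliers $\alpha_e\geq 0$ for the lower bounds and $\beta_i\geq 0$ for the triangle inequalities, the dual constraints read $\alpha_{i,i+1}+\beta_i\leq 1$ on each rim edge $(i,i+1)$ and $\alpha_{j,n}+\beta_{j-1}+\beta_j\leq 1$ on each spoke edge $(j,n)$.

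Given $T$ with $T\cap(T+1)=\emptyset$, the natural candidate is to set $\beta_i=1$ iff $i\in T$; $\alpha_{i,i+1}=1$ iff $i\notin T$ and $x_i+x_{i+1}>1$; and $\alpha_{j,n}=1$ iff $j\notin T\cup(T+1)$ and $x_j+x_n>1$, with all other dual variables zero. The disjointness $T\cap(T+1)=\emptyset$ guarantees that on every spoke $(j,n)$ at most one of $\beta_{j-1},\beta_j$ is $1$, so the dual constraints hold; the $\max\{0,\cdot\}$ terms in $\phi(T)$ arise precisely because we set the corresponding $\alpha_e$ to zero whenever its objective coefficient $x_i+x_j-1$ would be non-positive. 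The dual objective then evaluates to $\phi(T)$, and weak duality yields $\LB_P(\vect x)\geq\phi(T)$; maximising over all admissible $T$ gives $\LB_P(\vect x)\geq\Phi^*$.

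The only real obstacle is the bookkeeping on the cycle: verifying dual feasibility edge by edge under the cyclic indexing, and checking that the chosen sign conventions for the $\alpha_e$ combine correctly with the $\beta_i$ to reproduce $\phi(T)$. Both issues are controlled entirely by the condition $T\cap(T+1)=\emptyset$, which is why it is built into the definition of $\Phi^*$.
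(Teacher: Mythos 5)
Your approach is essentially the paper's. The paper announces, in the sentence immediately preceding the lemma, that sets $T$ with $T\cap(T+1)=\emptyset$ are to be read as feasible solutions of the dual of the LP defining $\LB_P(\vect x)$ with objective value $\phi(T)$; its proof is just the primal reading of your certificate. It partitions $E$ into the triangles $\{i,i+1\},\{i,n\},\{i+1,n\}$ for $i\in T$, the remaining rim edges, and the spokes $\{i,n\}$ with $i\notin T\cup(T+1)$ --- a partition precisely because $T\cap(T+1)=\emptyset$ prevents a spoke from lying in two chosen triangles --- and lower-bounds each group by the corresponding $\max\{0,\cdot\}$ term using the triangle inequality, the McCormick bound $y_{ij}\geq x_i+x_j-1$, and $y_{ij}\geq 0$. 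Your dual constraints and the role of the disjointness condition (at most one of $\beta_{j-1},\beta_j$ equal to $1$ on each spoke) are exactly right.

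One slip: you truncate the $\alpha_e$ to zero when their objective coefficients $x_i+x_j-1$ are non-positive, but you do not do the same for the $\beta_i$. With $\beta_i=1$ for \emph{every} $i\in T$, the dual objective is $\sum_{i\in T}(x_i+x_{i+1}+x_n-1)+\dotsb$, which is strictly smaller than $\phi(T)$ whenever some $i\in T$ has $x_i+x_{i+1}+x_n<1$, so the claim that ``the dual objective evaluates to $\phi(T)$'' fails for such $T$. This is easily repaired and costs nothing in the conclusion: either set $\beta_i=1$ only when $i\in T$ \emph{and} $x_i+x_{i+1}+x_n>1$ (the exact analogue of your rule for the $\alpha_e$; the edges of the discarded triangles then contribute $0$, matching the $\max\{0,\cdot\}=0$ term in $\phi(T)$), or first observe that a maximizer of $\phi$ may be chosen with $x_i+x_{i+1}+x_n\geq 1$ for all $i\in T$, which the paper records as a separate lemma. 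As written, though, the identity you assert does not hold for every admissible $T$, so the final ``maximise over $T$'' step needs one of these two patches.
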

\begin{proof}
  Let $T\subseteq[n-1]$ with $\phi(T)=\Phi^*$ and $T\cap(T+1)=\emptyset$. Using the partition
  \begin{multline*}
    E=\{\{i,i+1\},\{i,n\},\{i+1,n\}\,:\,i\in T\}\ \cup\ \{\{i,i+1\}\,:\,i\in[n-1]\setminus
    T\}\\ \cup\{\{i,n\}\,:\,i\in[n-1]\setminus \left(T\cup(T+1)\right)\}
  \end{multline*} 
  we have that for every $\vect y$ with $(\vect x,\vect y)\in P$,
  \begin{multline*}
    \sum_{ij\in E}y_{ij}=\sum_{i\in
      T}\left(y_{i,i+1}+y_{i,n}+y_{i+1,n}\right)+\sum_{i\in[n-1]\setminus
      T}y_{i,i+1}+\sum_{i\in[n-1]\setminus \left(T\cup(T+1)\right)}y_{i,n}\\
    \geq\sum_{i\in T}\max\{0,x_i+x_{i+1}+x_n-1\}+\sum_{i\in[n-1]\setminus
      T}\max\{0,x_i+x_{i+1}-1\}\\
    +\sum_{i\in[n-1]\setminus \left(T\cup(T+1)\right)}\max\{0,x_i+x_n-1\}=\phi(T).\qedhere
  \end{multline*}
\end{proof}
In the next lemma, we state a property of optimal sets $T$ which will be useful in the
subsequent arguments.
\begin{lemma}\label{lem:necessary}
  Let $T\subseteq[n-1]$ with $T\cap(T+1)=\emptyset$ and $\phi(T)=\Phi^*$. Then $x_i+x_{i+1}+x_n\leq 2$ for all $i\in T$.
\end{lemma}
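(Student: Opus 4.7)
The plan is to prove the contrapositive by a local swap argument: assume $i\in T$ with $x_i+x_{i+1}+x_n>2$ and show that $T'=T\setminus\{i\}$ has $\phi(T')>\phi(T)$, contradicting $\phi(T)=\Phi^\ast$.

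First I would check that $T'$ is admissible, i.e.\ $T'\cap(T'+1)=\emptyset$, which is immediate since $T'\subseteq T$. Then I would carefully tabulate how the three sums defining $\phi$ change when passing from $T$ to $T'$. In the first sum, the index $i$ is moved from $T$ to $[n-1]\setminus T'$, adding a term $\max\{0,x_i+x_{i+1}-1\}$ to $\phi(T')$. In the third sum, the term $\max\{0,x_i+x_{i+1}+x_n-1\}$ present in $\phi(T)$ is dropped. For the second sum, I need the observation that, because $T\cap(T+1)=\emptyset$ and $i\in T$, neither $i$ nor $i+1$ lies in $T'\cup(T'+1)$: indeed $i\notin T'$ and $i+1\notin T'$ (as $i+1\notin T$), while $i\in T'+1$ would require $i-1\in T'\subseteq T$ contradicting $T\cap(T+1)=\emptyset$, and $i+1\in T'+1$ would require $i\in T'$. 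Hence both $\max\{0,x_i+x_n-1\}$ and $\max\{0,x_{i+1}+x_n-1\}$ are new terms in $\phi(T')$ that were absent from $\phi(T)$.

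Putting these changes together gives
\[
\phi(T)-\phi(T')=\max\{0,x_i+x_{i+1}+x_n-1\}-\max\{0,x_i+x_{i+1}-1\}-\max\{0,x_i+x_n-1\}-\max\{0,x_{i+1}+x_n-1\}.
\]
Now I would use the hypothesis $x_i+x_{i+1}+x_n>2$ together with $x_k\in[0,1]$ to conclude that each of the three pairwise sums $x_i+x_{i+1}$, $x_i+x_n$, $x_{i+1}+x_n$ is strictly greater than $1$ (each equals the triple sum minus a variable $\leq 1$). Consequently every $\max\{0,\cdot\}$ reduces to its argument, and a one-line simplification yields $\phi(T)-\phi(T')=2-(x_i+x_{i+1}+x_n)<0$, contradicting the optimality of $T$.

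The main obstacle, though essentially bookkeeping, is the correct tracking of the second sum: one has to verify that removing a single element $i$ from $T$ actually uncovers \emph{two} previously excluded indices ($i$ and $i+1$) in $[n-1]\setminus(T\cup(T+1))$, and this is precisely where the hypothesis $T\cap(T+1)=\emptyset$ is used. Once that bookkeeping is done, the algebraic contradiction is immediate.
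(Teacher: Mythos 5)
Your proposal is correct and follows essentially the same route as the paper: remove the offending index $i$ from $T$, note that $x_i+x_{i+1}+x_n>2$ forces all three pairwise sums to exceed $1$, and compute $\phi(T\setminus\{i\})-\phi(T)=x_i+x_{i+1}+x_n-2>0$, contradicting optimality. The only difference is that you spell out the bookkeeping for the second sum (that deleting $i$ frees both $i$ and $i+1$ from $T\cup(T+1)$, using $T\cap(T+1)=\emptyset$), which the paper's one-line computation leaves implicit.
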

\begin{proof}
  Suppose $i\in T$ with $x_i+x_{i+1}+x_n>2$. Then $\min\{x_i+x_{i+1},x_i+x_n,x_{i+1}+x_n\}>1$, and therefore
  \[\phi(T\setminus\{i\})-\phi(T)=\left(2x_1+2x_2+2x_n-3\right)-\left(x_1+x_2+x_n-1\right)=x_1+x_2+x_n-2>0,\]
  which contradicts $\phi(T)=\Phi^*$.
\end{proof}
It will be convenient to assume that $T$ is a subset $\{i\in[n-1]\,:\,1\leq x_i+x_{i+1}+x_n\leq
2\}$, and that it is a maximal subset subject to $T\cap(T+1)=\emptyset$. The next lemma says that
there exists such a $T$ which also maximizes $\phi(T)$.
\begin{lemma}
  There exists a set $T\subseteq[n-1]$ with $T\cap(T+1)=\emptyset$ and $\phi(T)=\Phi^*$ such
  that the following two conditions are satisfied:
  \begin{enumerate}[(i)]
  \item $x_i+x_{i+1}+x_n\geq 1$ for all $i\in T$, and
  \item $1\leq x_i+x_{i+1}+x_n\leq 2\implies \{i-1,i,i+1\}\cap T^*\neq\emptyset$ for all $i\in[n-1]$.
  \end{enumerate}
\end{lemma}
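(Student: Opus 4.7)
The plan is to start with an arbitrary maximizer $T_0$ of $\phi$ subject to $T_0\cap(T_0+1)=\emptyset$ and then modify it in two successive phases, each preserving feasibility and the optimal value $\phi(T)=\Phi^*$. In the \emph{removal phase} I would delete every $i\in T$ with $x_i+x_{i+1}+x_n<1$; in the subsequent \emph{addition phase} I would insert any $i\in[n-1]$ such that $\{i-1,i,i+1\}\cap T=\emptyset$ and $1\leq x_i+x_{i+1}+x_n\leq 2$. Since $T\subseteq[n-1]$ is finite, both phases terminate. The final $T$ inherits condition (i) from the removal phase (the addition phase only ever inserts indices satisfying the hypothesis in (i)), and condition (ii) from the termination criterion of the addition phase.

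The removal step is routine. For $i\in T$ with $x_i+x_{i+1}+x_n<1$, the term contributed by $i$ to the third sum of $\phi$ vanishes, and $T'=T\setminus\{i\}$ trivially satisfies $T'\cap(T'+1)=\emptyset$. Tracking where $i$ and $i+1$ re-enter the first and second sums yields
\[
\phi(T')-\phi(T)=\max\{0,x_i+x_{i+1}-1\}+\mathbf{1}[i-1\notin T]\max\{0,x_i+x_n-1\}+\max\{0,x_{i+1}+x_n-1\}\geq 0,
\]
and optimality of $T$ forces equality, so $T'$ is again a maximizer.

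The addition step requires more care, and is where I expect the main obstacle. Fix $i$ satisfying the hypothesis of the addition step and let $T''=T\cup\{i\}$. Since $i-1,i+1\notin T$, we have $T''\cap(T''+1)=\emptyset$. Bookkeeping the departure of $i$ from the first sum, the departure of $i$ and $i+1$ from the second sum, and the arrival of $i$ in the third sum gives
\[
\phi(T'')-\phi(T)=(x_i+x_{i+1}+x_n-1)-\max\{0,x_i+x_{i+1}-1\}-\max\{0,x_i+x_n-1\}-\max\{0,x_{i+1}+x_n-1\}.
\]
The core task is to prove that this quantity is non-negative, for which I would run a case analysis on how many of the pairwise sums $x_i+x_{i+1}$, $x_i+x_n$, $x_{i+1}+x_n$ exceed $1$. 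If none do, the right-hand side equals $x_i+x_{i+1}+x_n-1\geq 0$ by the lower hypothesis; if exactly one or two exceed $1$, the inequality reduces to $1-x_k\geq 0$ for an appropriate remaining variable $x_k\in[0,1]$; and if all three exceed $1$, the expression collapses to $2-(x_i+x_{i+1}+x_n)$, which is non-negative precisely because of the upper hypothesis $x_i+x_{i+1}+x_n\leq 2$. In every case $\phi$ is preserved, so iterating the addition until no admissible $i$ remains produces the required $T$.
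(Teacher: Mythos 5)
Your proposal is correct and follows essentially the same two-phase strategy as the paper: delete the elements $i\in T$ with $x_i+x_{i+1}+x_n<1$ (which leaves $\phi$ unchanged since every pairwise sum is then below $1$), and then greedily insert any admissible $i$ with $1\leq x_i+x_{i+1}+x_n\leq 2$ whose neighbourhood misses $T$ (which cannot decrease $\phi$). The only difference is that you spell out the case analysis verifying $\phi(T\cup\{i\})\geq\phi(T)$, which the paper asserts without detail; your computation is correct (up to the harmless slip that in the one-pair case the slack is $x_k$ rather than $1-x_k$).
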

\begin{proof}
  If $x_i+x_{i+1}+x_n<1$ then $\max\{x_i+x_{i+1},x_i+x_n,x_{i+1}+x_n\}<1$, and therefore
  $\phi(T\setminus\{i\})=\phi(T)$. If $1\leq x_i+x_{i+1}+x_n\leq 2$ and $\{i-1,i,i+1\}\cap T=\emptyset$,
  then $T\cup\{i\}$ is a feasible set with $\phi(T\cup\{i\})\geq\phi(T)$. Therefore we can satisfy
  the conditions in the lemma by removing the elements $i\in T$ which violate the first condition,
  and adding the elements $i\in[n-1]\setminus T$ which violate the second condition.
\end{proof}
From now on, we fix a set $T^*\subseteq[n-1]$ with
\begin{enumerate}
\item $T^*\cap(T^*+1)=\emptyset$, and 
\item $\phi(T^*)=\Phi^*$, and
\item $1\leq x_i+x_{i+1}+x_n\leq 2$ for all $i\in T$, and
\item $T^*\cap\{i-1,i,i+1\}\neq\emptyset$ for all $i\in[n-1]$ with $1\leq x_i+x_{i+1}+x_n\leq 2$.
\end{enumerate}
In view of Lemma~\ref{lem:alternating_triangles}, it is sufficient to find sets $X_i\in\mathcal L$
with $\mu(X_i)=x_i$ and
\begin{equation}\label{eq:target}
  \sum_{i=1}^{n-1}\left(\mu(X_i\cap X_{i+1})+\mu(X_i\cap X_n)\right)=\phi(T^*).
\end{equation}
\begin{lemma}\label{lem:squeeze_Xi}
  Let $X_i\subseteq[0,1)$, $i\in[n-1]$, with $\mu(X_i)=x_i$. Then (\ref{eq:target}) is true if and
  only if the following three conditions are satisfied:
  \begin{enumerate}[(i)]
  \item $\mu(X_i\cap X_{i+1})+\mu(X_i\cap X_{n})+\mu(X_{i+1}\cap X_{n})=\max\{0,x_i+x_{i+1}+x_n-1\}$ for every
    $i\in T^*$, 
  \item $\mu(X_i\cap X_{i+1})=\max\{0,x_i+x_{i+1}-1\}$ for all $i\in[n-1]\setminus T^*$, and
  \item $\mu(X_i\cap X_{n})=\max\{0,x_i+x_n-1\}$ for all $i\in[n-1]\setminus \left(T^*\cup(T^*+1)\right)$.
  \end{enumerate}
\end{lemma}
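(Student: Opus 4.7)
The plan is to rewrite the left-hand side of (\ref{eq:target}) by regrouping edges exactly as in the proof of Lemma~\ref{lem:alternating_triangles}, so that both sides decompose into sums indexed by the same three families: triangles $\{i,i+1,n\}$ for $i\in T^*$, edges $\{i,i+1\}$ for $i\in[n-1]\setminus T^*$, and edges $\{i,n\}$ for $i\in[n-1]\setminus(T^*\cup(T^*+1))$. Once the two sides are grouped this way, the lemma becomes an identity between two sums in which each summand of the left-hand side is naturally paired with the corresponding summand of $\phi(T^*)$.

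Next I would prove the termwise inequality. For a single edge pair $\{a,b\}$, the bound $\mu(X_a\cap X_b)\geq\max\{0,x_a+x_b-1\}$ follows immediately from $\mu(X_a\cup X_b)\leq 1$ together with nonnegativity of measure. For a triangle contribution $\mu(X_i\cap X_{i+1})+\mu(X_i\cap X_n)+\mu(X_{i+1}\cap X_n)$ with $i\in T^*$, I would invoke three-set inclusion-exclusion to rewrite this sum as
$$x_i+x_{i+1}+x_n+\mu(X_i\cap X_{i+1}\cap X_n)-\mu(X_i\cup X_{i+1}\cup X_n),$$
then drop the nonnegative triple intersection and use $\mu(X_i\cup X_{i+1}\cup X_n)\leq 1$ to obtain the bound $\max\{0,x_i+x_{i+1}+x_n-1\}$ (nonnegativity of the sum is trivial).

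Summing these termwise inequalities in fact re-derives Lemma~\ref{lem:alternating_triangles} at the measure-theoretic level. Since every summand on the regrouped left-hand side of (\ref{eq:target}) is bounded below by a fixed nonnegative quantity whose total over all groups equals $\phi(T^*)$, equality of the two sums is equivalent to equality in each individual bound; translating these equalities back is precisely conditions (i), (ii), (iii). There is no real obstacle here: the only slightly non-obvious ingredient is the three-set inclusion-exclusion step, and the remainder is bookkeeping.
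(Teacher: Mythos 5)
Your proposal is correct and follows essentially the same route as the paper: the same regrouping of edges into triangles indexed by $T^*$ and leftover rim/spoke edges, the same two-set and three-set inclusion--exclusion lower bounds on the pairwise-intersection terms, and the same observation that equality of the total sum with $\phi(T^*)$ forces equality in each termwise bound. Nothing to add.
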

\begin{proof}
  The left-hand side of~(\ref{eq:target}) can
  be expanded as follows:
  \begin{multline*}
    \sum_{i=1}^{n-1}\left(\mu(X_i\cap X_{i+1})+\mu(X_i\cap X_n)\right) = \sum_{i\in
      T}\left(\mu(X_i\cap X_{i+1})+\mu(X_i\cap X_n)+\mu(X_{i+1}\cap X_n)\right)\\
    +\sum_{i\in[n-1]\setminus T^*}\mu(X_i\cap X_{i+1})+\sum_{i\in[n-1]\setminus
      (T^*\cup(T^*+1))}\mu(X_i\cap X_{n}).
  \end{multline*}
  This implies immediately that the three conditions in the lemma are sufficient. To see that they
  are also necessary, we observe that by the inclusion-exclusion principle, for every
  $i\in[n-1]$ and every $j\in\{i+1,n\}$,
  \[\mu(X_i\cap X_j) = \mu(X_i)+\mu(X_j)-\mu(X_i\cup X_j)\geq x_i+x_j-1,\]
  and for every $i\in T^*$,
  \begin{multline*}
    \mu(X_i\cap X_{i+1})+\mu(X_i\cap X_n)+\mu(X_{i+1}\cap X_{n}) \\
    = \mu(X_i)+\mu(X_{i+1})+\mu(X_n)+\mu(X_i\cap
    X_{i+1}\cap X_n)-\mu(X_i\cup X_{i+1}\cup X_n)\geq x_i+x_{i+1}+x_n-1.
  \end{multline*}
  As a consequence,
  \begin{itemize}
  \item $\mu(X_i\cap X_{i+1})+\mu(X_i\cap X_{n})+\mu(X_{i+1}\cap X_{n})\geq\max\{0,x_i+x_{i+1}+x_n-1\}$ for every
    $i\in T^*$, 
  \item $\mu(X_i\cap X_{i+1})\geq\max\{0,x_i+x_{i+1}-1\}$ for all $i\in[n-1]\setminus T^*$, and
  \item $\mu(X_i\cap X_{n})\geq\max\{0,x_i+x_n-1\}$ for all $i\in[n-1]\setminus \left(T^*\cup(T^*+1)\right)$,
  \end{itemize}
  and if any of these inequalities is strict then the left-hand side of~(\ref{eq:target}) is
  strictly larger than the right-hand side.
\end{proof}
For the arguments in the proof of the next lemma it is sometimes convenient to use the following
equivalent statements for the conditions in Lemma~\ref{lem:squeeze_Xi}:
\begin{enumerate}[(i)]
  \item $X_i\cup X_{i+1}\cup X_n=[0,1)$ and $X_i\cap X_{i+1}\cap X_n=\emptyset$ for every
    $i\in T^*$, 
  \item $X_i\cap X_{i+1}=\emptyset$ or $X_i\cup X_{i+1}=[0,1)$ for all $i\in[n-1]\setminus T^*$, and
  \item $X_i\cap X_{n}=\emptyset$ or $X_i\cup X_{n}=[0,1)$ for all $i\in[n-1]\setminus \left(T^*\cup(T^*+1)\right)$.
  \end{enumerate}
In order to state another equivalent condition for the existence of the sets $X_i$, we introduce the notation
\begin{align*}
  m_i &= \max\{0,\,x_i-x_n\}, & M_i &= \min\{x_i,\,1-x_n\},\\
  m'_i &= \min\{1,\,x_i+x_{i+1}\}-x_n, & M'_i &= \max\{1,\,x_i+x_{i+1}\}-x_n,
\end{align*}
for $i\in[n-1]$.
\begin{lemma}\label{lem:equivalence}
  The following two statements are equivalent.
  \begin{enumerate}[(i)]
  \item There exist $X_i\subseteq[0,1]$ with $\mu(X_i)=x_i$ satisfying~(\ref{eq:target}).
  \item There exists a vector $\vect z=(z_1,\dots,z_{n-1})$ which satisfies the system
\begin{align}
  -z_i &\leq -m_i && i\in T^*\cup(T^*+1),\label{ineq:1}\\
  -z_i &\leq -M_i && i\in[n-1]\setminus\left(T^*\cup(T^*+1)\right),\label{ineq:2}\\
  z_i  &\leq M_i && i\in[n-1],\label{ineq:3}\\
  -z_i-z_{i+1} & \leq -M'_i && i\in T^*,\label{ineq:4}\\
  -z_i-z_{i+1} & \leq -m'_i && i\in [n-1]\setminus T^*,\label{ineq:5}\\
  z_i+z_{i+1} &\leq M'_i&& i\in [n-1]\setminus T^*.\label{ineq:6}
\end{align}
  \end{enumerate}
\end{lemma}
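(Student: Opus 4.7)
I would prove the two directions separately.

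For (i) $\Rightarrow$ (ii), set $z_i := \mu(X_i \setminus X_n)$ for $i \in [n-1]$. The box bounds (\ref{ineq:1}) and (\ref{ineq:3}) follow from $X_i \setminus X_n \subseteq [0,1) \setminus X_n$ together with $\mu(X_i) = x_i$. For $i \notin T^* \cup (T^*+1)$, the equivalent reformulation ``$X_i \cap X_n = \emptyset$ or $X_i \cup X_n = [0,1)$'' of Lemma~\ref{lem:squeeze_Xi}(iii) forces $z_i = M_i$, giving (\ref{ineq:2}). For $i \in T^*$, the equality $[0,1) \setminus X_n = (X_i \setminus X_n) \cup (X_{i+1} \setminus X_n)$ gives $z_i + z_{i+1} \geq 1 - x_n$, while the disjointness of $X_i \cap X_n$ and $X_{i+1} \cap X_n$ as subsets of $X_n$ gives $(x_i - z_i) + (x_{i+1} - z_{i+1}) \leq x_n$, that is, $z_i + z_{i+1} \geq x_i + x_{i+1} - x_n$; together they produce (\ref{ineq:4}). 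For $i \in [n-1] \setminus T^*$ the same two expressions appear, but the disjunction in Lemma~\ref{lem:squeeze_Xi}(ii) converts one into a lower and the other into an upper bound, yielding (\ref{ineq:5}) and (\ref{ineq:6}).

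For (ii) $\Rightarrow$ (i) I would construct the sets explicitly. Fix $X_n = [0, x_n)$ and write each $X_i$ as a disjoint union $B_i \sqcup A_i$ with $B_i \subseteq [0, x_n)$ of measure $x_i - z_i$ and $A_i \subseteq [x_n, 1)$ of measure $z_i$; the box constraints guarantee that these measures are nonnegative and fit in their respective tracks. The matching property $T^* \cap (T^* + 1) = \emptyset$ decomposes the cycle $[n-1]$ into ``triangle blocks'' $\{i, i+1\}$ for $i \in T^*$ together with singleton blocks from $[n-1] \setminus (T^* \cup (T^*+1))$. For a singleton block, $z_i = M_i$ is forced, so $X_i$ is a single interval lying inside $[x_n, 1)$ or covering $X_n$ according to the sign of $x_i + x_n - 1$. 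For a triangle block $\{i, i+1\}$, inequality (\ref{ineq:4}) allows placing $A_i$ and $A_{i+1}$ as the initial and terminal sub-intervals of $[x_n, 1)$ whose union covers the track, and $B_i, B_{i+1}$ as disjoint initial and terminal sub-intervals of $[0, x_n)$.

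The main obstacle is enforcing the non-triangle pair conditions from Lemma~\ref{lem:squeeze_Xi}(ii) across block boundaries: for $i \in [n-1] \setminus T^*$ we must have $X_i \cap X_{i+1} = \emptyset$ or $X_i \cup X_{i+1} = [0, 1)$, the correct branch being selected by the sign of $x_i + x_{i+1} - 1$. Inequalities (\ref{ineq:5})--(\ref{ineq:6}) give precisely the bounds on $z_i + z_{i+1}$ needed to realize that branch; the delicate step is to choose consistently which ``slot'' (initial vs.\ terminal) each $A_i, B_i$ occupies so that adjacent blocks mesh around the cycle. I would process the blocks in cyclic order with a fixed orientation on each track, using the free parameter $z_i$ within each triangle block to absorb slack at both of its boundaries. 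Property (iv) of $T^*$ eliminates the only genuinely awkward configuration, namely two consecutive singleton blocks with $1 \leq x_i + x_{i+1} + x_n \leq 2$; in the surviving cases, either $x_i + x_{i+1} + x_n < 1$ or $x_i + x_{i+1} + x_n > 2$, and the construction reduces to placing disjoint intervals or fully-overlapping intervals, which can be verified directly.
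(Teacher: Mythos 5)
Your proposal takes essentially the same route as the paper: the forward direction via $z_i=\mu(X_i\setminus X_n)$ is identical, and the reverse direction uses the same two-track construction with $X_n=[0,x_n)$ and $X_i$ split into a piece of measure $z_i$ in $[x_n,1)$ and a piece of measure $x_i-z_i$ in $[0,x_n)$. The one ``delicate step'' you defer — choosing initial versus terminal slots consistently around the cycle — is resolved in the paper simply by parity (odd $i$ anchored at the left ends of both tracks, even $i$ at the right ends), which is exactly where the evenness of $n-1$ enters; also, your appeal to property (iv) of $T^*$ is unnecessary here, since for two consecutive singleton blocks the forced values $z_i=M_i$, $z_{i+1}=M_{i+1}$ together with~(\ref{ineq:5})--(\ref{ineq:6}) already yield the required disjoint-or-covering condition for the edge $\{i,i+1\}$.
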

\begin{proof}\hfill
  \begin{description}
  \item[``$(i)\Rightarrow(ii)$''] For $i\in[n-1]$, set $z_i=\mu(X_i\setminus X_n)$. Then
    $z_i\geq\mu(X_i)-\mu(X_n)=x_i-x_n$, and together with $z_i\geq 0$, this
    implies~(\ref{ineq:1}). From $X_i\setminus X_n\subseteq X_i$ and
    $X_i\setminus X_n\subseteq[0,1)\setminus X_n$, we obtain $z_i\leq x_i$ and $z_i\leq 1-x_n$,
    hence~(\ref{ineq:3}). The remaining inequalities are obtained by expressing the conditions from
    Lemma~\ref{lem:squeeze_Xi} in terms of the variables $z_i$ in the following way:
    \begin{itemize}
    \item For $i\in T^*$, $X_i\cup X_{i+1}\cup X_n=[0,1)$ implies $(X_i\setminus X_{n-1})\cup
      (X_{i+1}\setminus X_n)\supset[0,1)\setminus X_n$, hence $z_i+z_{i+1}\geq 1-x_n$.
      On the other hand, $X_i\cap X_{i+1}\cap X_n=\emptyset$ implies $\mu(X_i\cap X_n)+\mu(X_{i+1}\cap X_n)\leq\mu(X_n)$, hence
      $(x_i-z_i)+(x_{i+1}-z_{i+1})\leq x_n$. Rearranging the latter inequality gives
      $z_i+z_{i+1}\geq x_i+x_{i+1}-x_n$. Combining these inequalities yields~(\ref{ineq:4}). 
    \item For $i\in[n-1]\setminus T^*$ with $x_i+x_{i+1}\geq 1$, $X_i\cup X_{i+1}=[0,1)$ implies
      \[z_i+z_{i+1}=\mu(X_i\setminus X_n)+\mu(X_{i+1}\setminus X_n)\geq\mu((X_i\cup
        X_{i+1})\setminus X_n)=\mu([0,1)\setminus X_n)=1-x_n,\]
      and 
      \[(x_i-z_i)+(x_{i-1}-z_{i-1})=\mu(X_i\cap X_n)+\mu(X_{i+1}\cap X_n)\geq\mu((X_i\cup
        X_{i+1})\cap X_n)=\mu(X_n)= x_n.\]
      Rearranging the latter inequality gives
      $z_i+z_{i+1}\leq x_i+x_{i+1}-x_n$, and combining the two inequalities, we obtain
      $1-x_n\leq z_i+z_{i+1}\leq x_i+x_{i+1}-x_n$, which is~(\ref{ineq:5}) and~(\ref{ineq:6}) for
      $i\in[n-1]\setminus T^*$ with $x_i+x_{i+1}\geq 1$.
    \item For $i\in[n-1]\setminus T^*$ with $x_i+x_{i+1}\leq 1$, $X_i\cap X_{i+1}=\emptyset$ implies
      \[z_i+z_{i+1}=\mu(X_i\setminus X_n)+\mu(X_{i+1}\setminus X_n)\leq\mu([0,1)\setminus X_n)=1-x_n,\]
      and
      \[(x_i-z_i)+(x_{i-1}-z_{i-1})=\mu(X_i\cap X_n)+\mu(X_{i+1}\cap X_n)\leq\mu(X_n)=x_n.\]
      Rearranging the latter inequality gives
      $z_i+z_{i+1}\geq x_i+x_{i+1}-x_n$, and combining the two inequalities, we obtain
      $x_i+x_{i+1}-x_n\leq z_i+z_{i+1}\leq 1-x_n$, which is~(\ref{ineq:5}) and~(\ref{ineq:6}) for
      $i\in[n-1]\setminus T^*$ with $x_i+x_{i+1}\leq 1$.
    \item For $i\in [n-1]\setminus\left(T^*\cup(T^*+1)\right)$, $X_i\cup X_n=[0,1)$ if $x_i+x_n\geq 1$ and $X_i\cap
      X_n=\emptyset$ if $x_i+x_n\leq 1$. This implies $z_i=\min\{x_i,1-x_n\}$, and in particular,~(\ref{ineq:2}).  
    \end{itemize}
  \item[``$(ii)\Rightarrow(i)$''] We set $X_n=[0,x_n)$, and then
  \[X_i =
    \begin{cases}
      [x_n,x_n+z_i)\cup[0,x_i-z_i) & \text{for odd }i\in[n-1],\\
      [1-z_i,1)\cup[x_n-x_i+z_i,x_n) & \text{for even }i\in[n-1].
    \end{cases}\qedhere
  \] 
  \end{description}
\end{proof}
We complete the proof of Theorem~\ref{thm:even_wheels} by showing that the
system~(\ref{ineq:1})--(\ref{ineq:6}) is feasible. Suppose it isn't. By Farkas' lemma this implies
the existence of non-negative numbers $\pi_i^-$, $\pi_i^+$ and $\sigma_i^-$ for $i\in[n-1]$, and
$\sigma_i^+$ for $i\in[n-1]\setminus T^*$, such that
\begin{align*}
  \pi_i^++\sigma_{i-1}^+ &= \pi_i^-+\sigma_i^-+\sigma_{i-1}^- && i\in T^*,\\
  \pi_i^++\sigma_{i}^+ &= \pi_i^-+\sigma_i^-+\sigma_{i-1}^- && i\in T^*+1,\\
  \pi_i^++\sigma_{i}^++\sigma_{i-1}^+ &= \pi_i^-+\sigma_i^-+\sigma_{i-1}^- && i\in[n-1]\setminus\left(T^*\cup(T^*+1)\right),
\end{align*}
and $h(\vect\pi^-,\vect\pi^+,\vect\sigma^-,\vect\sigma^+)<0$, where
\begin{multline*}
  h(\vect\pi^-,\vect\pi^+,\vect\sigma^-,\vect\sigma^+)=\sum_{i\in
    T^*}\left(-m_i\pi_i^-+M_i\pi_i^+-M_i'\sigma_i^-\right)+\sum_{i\in
    T^*+1}\left(-m_i\pi_i^-+M_i\pi_i^+-m'_i\sigma_i^-+M'_i\sigma_i^+\right)\\
  +\sum_{i\in[n-1]\setminus\left(T^*\cup(T^*+1)\right)}\left(M_i\left(\pi_i^+-\pi_i^-\right)-m'_i\sigma_i^-+M'_i\sigma_i^+\right).
\end{multline*}
This is a negative cost circulation in the network $N(\vect x,T^*)$ with node set $\{O\}\cup[n-1]$, and the arc set
described as follows:
\begin{itemize}
\item There are arcs $(O,i)$ and $(i,O)$ for every $i\in[n-1]$. For $i\in T^*\cup(T^*+1)$, the costs
  are
  \begin{align*}
    \cost(O,i) &=
                 \begin{cases}
                   M_i & \text{if }i\text{ is odd},\\
                   -m_i & \text{if }i\text{ is even},
                 \end{cases}& \cost(i,O) &=
                                           \begin{cases}
                                             -m_i & \text{if }i\text{ is odd},\\
                                             M_i & \text{if }i\text{ is even},
                                           \end{cases}
  \end{align*}
  and for $i\not\in T^*\cup(T^*+1)$,
  \begin{align*}
    \cost(O,i) &=
                 \begin{cases}
                   M_i & \text{if }i\text{ is odd},\\
                   -M_i & \text{if }i\text{ is even},
                 \end{cases}& \cost(i,O) &=
                                           \begin{cases}
                                             -M_i & \text{if }i\text{ is odd},\\
                                             M_i & \text{if }i\text{ is even}.
                                           \end{cases}
  \end{align*}
  The flows on these arcs correspond to the variables $\pi_i^+$ and $\pi_i^-$.
\item For every $i\in T^*$ there is an arc with cost $-M'_i$ corresponding to the variable
  $\sigma_i^-$. This arc is $(i,i+1)$ for odd $i$ and $(i+1,i)$ for even $i$.
\item For every $i\in[n-1]\setminus T^*$ there are two arcs with costs $-m'_i$ and $M'_i$,
  respectively, corresponding to the variables $\sigma_i^-$ and $\sigma^+_i$. For odd $i$, the arc
  with cost $-m'_i$ is $(i,i+1)$ and the arc with cost $M'_i$ is $(i+1,i)$, and for even $i$ it is
  the other way around.
\end{itemize}
The construction is illustrated in Figures~\ref{fig:flow_network_arcs_1}
to~\ref{fig:flow_network_arcs_3}.
\begin{figure}[htb]
  \begin{minipage}{.49\linewidth}
    \centering
    \begin{tikzpicture}
      \node[draw,fill,circle,outer sep=2pt,inner sep=1pt,label={left:{\small $O$}}] (O) at (0,0) {};
      \node[draw,fill,circle,outer sep=2pt,inner sep=1pt,label={right:{\small $i\in T^*$ even}}]
      (i1) at (4,0) {};
      \node[draw,fill,circle,outer sep=2pt,inner sep=1pt,label={right:{\small $i+1$}}] (i2) at (4,2) {};
      \node[draw,fill,circle,outer sep=2pt,inner sep=1pt,label={right:{\small $i-1$}}] (i3) at
      (4,-2) {};
      \draw[thick,->,bend left=10] (O) to node[above] {{\small $-m_i\,(\pi_i^-)$}} (i1);
      \draw[thick,<-,bend left=-10] (O) to node[below] {{\small $M_i\,(\pi_i^+)$}} (i1);
      \draw[thick,->,bend left=-0] (i2) to node[right] {{\small $-M'_i\,(\sigma_i^-)$}} (i1);
      \draw[thick,->,bend left=10] (i3) to node[left] {{\small $-m'_{i-1}\,(\sigma_{i-1}^-)$}} (i1);
      \draw[thick,<-,bend left=-10] (i3) to node[right] {{\small $M'_{i-1}\,(\sigma_{i-1}^+)$}} (i1);
    \end{tikzpicture}
  \end{minipage}\hfill
  \begin{minipage}{.49\linewidth}
    \centering
    \begin{tikzpicture}
      \node[draw,fill,circle,outer sep=2pt,inner sep=1pt,label={left:{\small $O$}}] (O) at (0,0) {};
      \node[draw,fill,circle,outer sep=2pt,inner sep=1pt,label={right:{\small $i\in T^*$ odd}}] (i1) at (4,0)
      {};
      \node[draw,fill,circle,outer sep=2pt,inner sep=1pt,label={right:{\small $i+1$}}] (i2) at (4,2)
      {};
      \node[draw,fill,circle,outer sep=2pt,inner sep=1pt,label={right:{\small $i-1$}}] (i3) at (4,-2)
      {};
      \draw[thick,->,bend left=10] (O) to node[above] {{\small $M_i\,(\pi_i^+)$}} (i1);
      \draw[thick,<-,bend left=-10] (O) to node[below] {{\small $-m_i\,(\pi_i^-)$}} (i1);
      \draw[thick,<-,bend left=-0] (i2) to node[right] {{\small $-M'_i\,(\sigma_i^-)$}} (i1);
      \draw[thick,->,bend left=10] (i3) to node[left] {{\small $M'_{i-1}\,(\sigma_{i-1}^+)$}} (i1);
      \draw[thick,<-,bend left=-10] (i3) to node[right] {{\small $-m'_{i-1}\,(\sigma_{i-1}^-)$}} (i1);
    \end{tikzpicture}
  \end{minipage}
  \caption{The arcs incident with a node $i\in T^*$.}
  \label{fig:flow_network_arcs_1}
\end{figure}
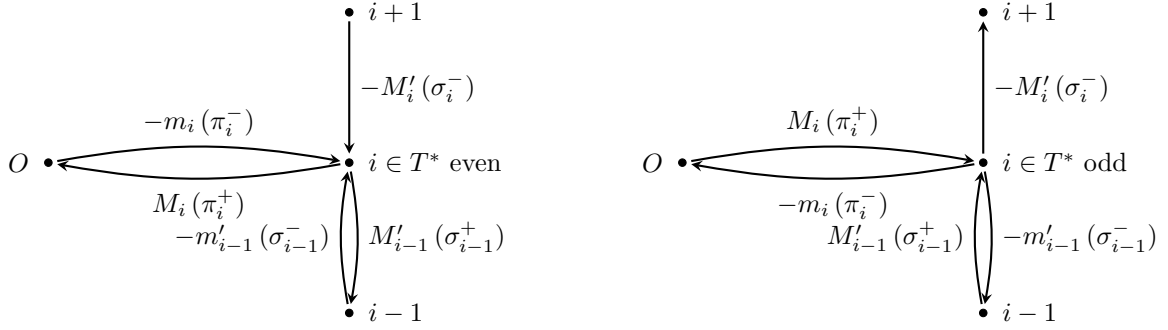
\begin{figure}[htb]
  \begin{minipage}{.49\linewidth}
    \centering
    \begin{tikzpicture}
      \node[draw,fill,circle,outer sep=2pt,inner sep=1pt,label={left:{\small $O$}}] (O) at (0,0) {};
      \node[draw,fill,circle,outer sep=2pt,inner sep=1pt,label={right:{\small $i\in T^*+1$ even}}] (i1) at (4,0)
      {};
      \node[draw,fill,circle,outer sep=2pt,inner sep=1pt,label={right:{\small $i+1$}}] (i2) at (4,2)
      {};
      \node[draw,fill,circle,outer sep=2pt,inner sep=1pt,label={right:{\small $i-1$}}] (i3) at (4,-2)
      {};
      \draw[thick,->,bend left=10] (O) to node[above] {{\small $-m_i\,(\pi_i^-)$}} (i1);
      \draw[thick,<-,bend left=-10] (O) to node[below] {{\small $M_i\,(\pi_i^+)$}} (i1);
      \draw[thick,->,bend left=-10] (i1) to node[right] {{\small $M'_i\,(\sigma_i^+)$}} (i2);
      \draw[thick,<-,bend left=10] (i1) to node[left] {{\small $-m'_i\,(\sigma_i^-)$}} (i2);
      \draw[thick,->,bend left=0] (i3) to node[right] {{\small $-M'_{i-1}\,(\sigma_{i-1}^-)$}} (i1);
    \end{tikzpicture}
  \end{minipage}\hfill
  \begin{minipage}{.49\linewidth}
    \centering
    \begin{tikzpicture}
      \node[draw,fill,circle,outer sep=2pt,inner sep=1pt,label={left:{\small $O$}}] (O) at (0,0) {};
      \node[draw,fill,circle,outer sep=2pt,inner sep=1pt,label={right:{\small $i\in T^*+1$ odd}}] (i1) at (4,0)
      {};
      \node[draw,fill,circle,outer sep=2pt,inner sep=1pt,label={right:{\small $i+1$}}] (i2) at (4,2)
      {};
      \node[draw,fill,circle,outer sep=2pt,inner sep=1pt,label={right:{\small $i-1$}}] (i3) at (4,-2)
      {};
      \draw[thick,->,bend left=10] (O) to node[above] {{\small $M_i\,(\pi_i^+)$}} (i1);
      \draw[thick,<-,bend left=-10] (O) to node[below] {{\small $-m_i\,(\pi_i^-)$}} (i1);
       \draw[thick,->,bend left=-10] (i1) to node[right] {{\small $-m'_i\,(\sigma_i^-)$}} (i2);
      \draw[thick,<-,bend left=10] (i1) to node[left] {{\small $M'_i\,(\sigma_i^+)$}} (i2);
      \draw[thick,<-,bend left=0] (i3) to node[right] {{\small $-M'_{i-1}\,(\sigma_{i-1}^-)$}} (i1);
    \end{tikzpicture}
  \end{minipage}
  \caption{The arcs incident with a node $i\in T^*+1$.}
  \label{fig:flow_network_arcs_2}
\end{figure}
\begin{figure}[htb]
  \begin{minipage}{.49\linewidth}
    \centering
    \begin{tikzpicture}
      \node[draw,fill,circle,outer sep=2pt,inner sep=1pt,label={left:{\small $O$}}] (O) at (0,0) {};
      \node[draw,fill,circle,outer sep=2pt,inner sep=1pt,label={right:{\small $i\not\in T^*\cup(T^*+1)$ even}}] (i1) at (4,0)
      {};
      \node[draw,fill,circle,outer sep=2pt,inner sep=1pt,label={right:{\small $i+1$}}] (i2) at (4,2)
      {};
      \node[draw,fill,circle,outer sep=2pt,inner sep=1pt,label={right:{\small $i-1$}}] (i3) at (4,-2)
      {};
      \draw[thick,->,bend left=10] (O) to node[above] {{\small $-M_i\,(\pi_i^-)$}} (i1);
      \draw[thick,<-,bend left=-10] (O) to node[below] {{\small $M_i\,(\pi_i^+)$}} (i1);
      \draw[thick,->,bend left=-10] (i1) to node[right] {{\small $M'_i\,(\sigma_i^+)$}} (i2);
      \draw[thick,<-,bend left=10] (i1) to node[left] {{\small $-m'_i\,(\sigma_i^-)$}} (i2);
      \draw[thick,->,bend left=0] (i3) to node[right] {{\small $-M'_{i-1}\,(\sigma_{i-1}^-)$}} (i1);
    \end{tikzpicture}
  \end{minipage}\hfill
  \begin{minipage}{.49\linewidth}
    \centering
    \begin{tikzpicture}
      \node[draw,fill,circle,outer sep=2pt,inner sep=1pt,label={left:{\small $O$}}] (O) at (0,0) {};
      \node[draw,fill,circle,outer sep=2pt,inner sep=1pt,label={right:{\small $i\not\in T^*\cup(T^*+1)$ odd}}] (i1) at (4,0)
      {};
      \node[draw,fill,circle,outer sep=2pt,inner sep=1pt,label={right:{\small $i+1$}}] (i2) at (4,2)
      {};
      \node[draw,fill,circle,outer sep=2pt,inner sep=1pt,label={right:{\small $i-1$}}] (i3) at (4,-2)
      {};
      \draw[thick,->,bend left=10] (O) to node[above] {{\small $M_i\,(\pi_i^+)$}} (i1);
      \draw[thick,<-,bend left=-10] (O) to node[below] {{\small $-M_i\,(\pi_i^-)$}} (i1);
       \draw[thick,->,bend left=-10] (i1) to node[right] {{\small $-m'_i\,(\sigma_i^-)$}} (i2);
      \draw[thick,<-,bend left=10] (i1) to node[left] {{\small $M'_i\,(\sigma_i^+)$}} (i2);
      \draw[thick,<-,bend left=0] (i3) to node[right] {{\small $-M'_{i-1}\,(\sigma_{i-1}^-)$}} (i1);
    \end{tikzpicture}
  \end{minipage}
  \caption{The arcs incident with a node $i\in[n-1]\setminus\left(T^*\cup(T^*+1)\right)$.}
  \label{fig:flow_network_arcs_3}
\end{figure}
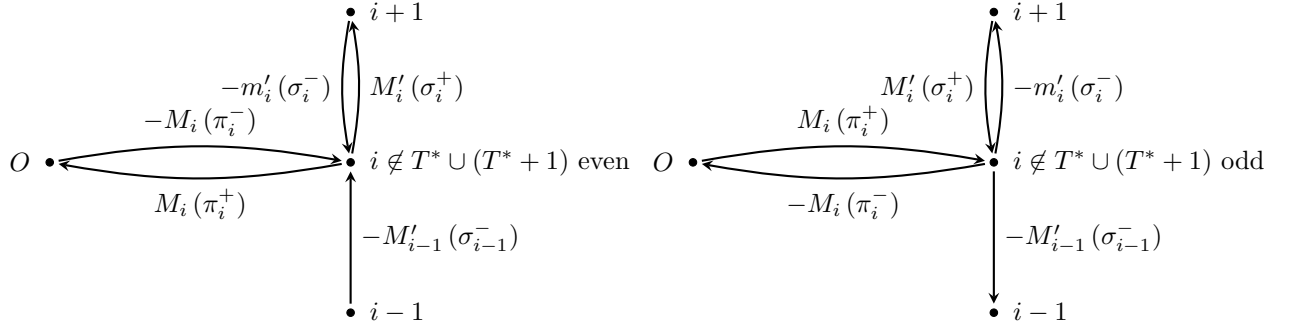

Figure~\ref{fig:flow_network_costs} shows the complete network for $n=13$ and $T^*=\{1,3,6,8,10\}$.
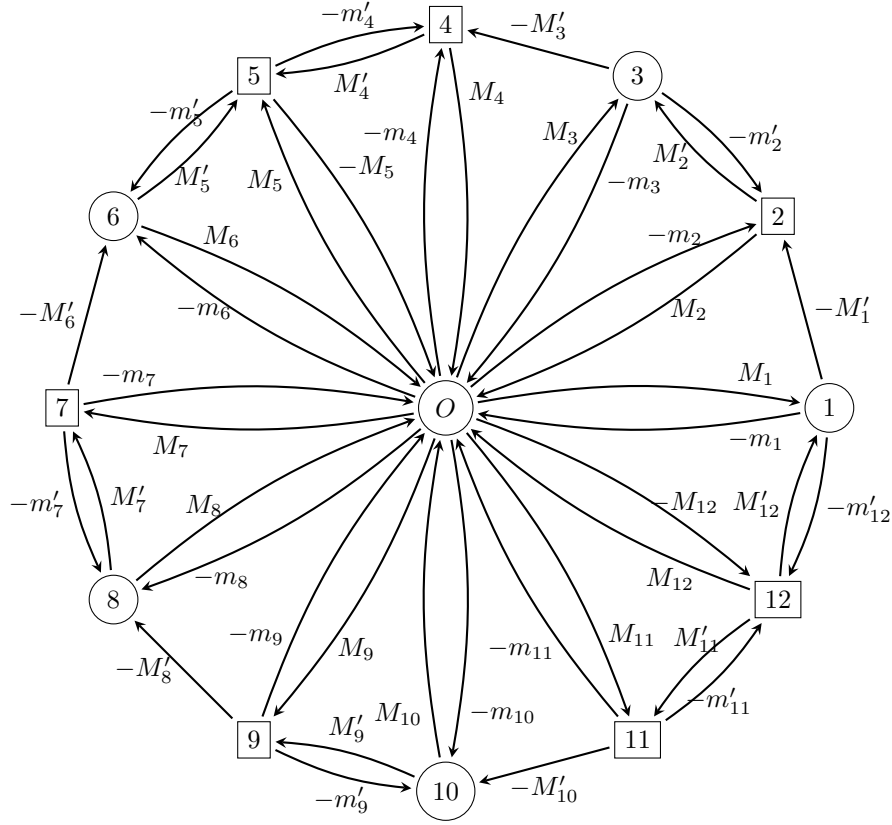
\begin{figure}[htb]
  \centering
  \begin{tikzpicture}[scale=1.7]
    \node[draw,circle,outer sep=2pt] (O) at (0,0) {{\small $O$}};
    \node[draw,circle,outer sep=2pt] (v1) at (0:3) {{\small $1$}};
    \node[draw,rectangle,outer sep=2pt] (v2) at (30:3) {{\small $2$}};
    \node[draw,circle,outer sep=2pt] (v3) at (60:3) {{\small $3$}};
    \node[draw,rectangle,outer sep=2pt] (v4) at (90:3) {{\small $4$}};
    \node[draw,rectangle,outer sep=2pt] (v5) at (120:3) {{\small $5$}};
    \node[draw,circle,outer sep=2pt] (v6) at (150:3) {{\small $6$}};
    \node[draw,rectangle,outer sep=2pt] (v7) at (180:3) {{\small $7$}};
    \node[draw,circle,outer sep=2pt] (v8) at (210:3) {{\small $8$}};
    \node[draw,rectangle,outer sep=2pt] (v9) at (240:3) {{\small $9$}};
    \node[draw,circle,outer sep=2pt] (v10) at (270:3) {{\small $10$}};
    \node[draw,rectangle,outer sep=2pt] (v11) at (300:3) {{\small $11$}};
    \node[draw,rectangle,outer sep=2pt] (v12) at (330:3) {{\small $12$}};
    \draw[thick,->,bend left=10] (O) to node[above,very near end] {{\small $M_1$}} (v1);
    \draw[thick,<-,bend left=-10] (O) to node[below,very near end] {{\small $-m_1$}} (v1);
    \draw[thick,->,bend left=10] (O) to node[above,near end] {{\small $-m_2$}} (v2);
    \draw[thick,<-,bend left=-10] (O) to node[below,near end] {{\small $M_2$}} (v2);
    \draw[thick,->,bend left=10] (O) to node[left,very near end] {{\small $M_3$}} (v3);
    \draw[thick,<-,bend left=-10] (O) to node[right,near end] {{\small $-m_3$}} (v3);
    \draw[thick,->,bend left=10] (O) to node[left,near end] {{\small $-m_4$}} (v4);
    \draw[thick,<-,bend left=-10] (O) to node[right,very near end] {{\small $M_4$}} (v4);
    \draw[thick,->,bend left=10] (O) to node[left,near end] {{\small $M_5$}} (v5);
    \draw[thick,<-,bend left=-10] (O) to node[right,near end] {{\small $-M_5$}} (v5);
    \draw[thick,->,bend left=10] (O) to node[below,near end] {{\small $-m_6$}} (v6);
    \draw[thick,<-,bend left=-10] (O) to node[above,near end] {{\small $M_6$}} (v6);
    \draw[thick,->,bend left=10] (O) to node[below,near end] {{\small $M_7$}} (v7);
    \draw[thick,<-,bend left=-10] (O) to node[above,very near end] {{\small $-m_7$}} (v7);
    \draw[thick,->,bend left=10] (O) to node[below,near end] {{\small $-m_8$}} (v8);
    \draw[thick,<-,bend left=-10] (O) to node[above,near end] {{\small $M_8$}} (v8);
    \draw[thick,->,bend left=10] (O) to node[right,near end] {{\small $M_9$}} (v9);
    \draw[thick,<-,bend left=-10] (O) to node[left,near end] {{\small $-m_9$}} (v9);
    \draw[thick,->,bend left=10] (O) to node[right,very near end] {{\small $-m_{10}$}} (v10);
    \draw[thick,<-,bend left=-10] (O) to node[left,very near end] {{\small $M_{10}$}} (v10);
    \draw[thick,->,bend left=10] (O) to node[right,near end] {{\small $M_{11}$}} (v11);
    \draw[thick,<-,bend left=-10] (O) to node[left,near end] {{\small $-m_{11}$}} (v11);
    \draw[thick,->,bend left=10] (O) to node[above,near end] {{\small $-M_{12}$}} (v12);
    \draw[thick,<-,bend left=-10] (O) to node[below,near end] {{\small $M_{12}$}} (v12);
    \draw[thick,<-] (v2) to node[right] {{\small $-M'_1$}} (v1); \draw[thick,<-] (v4) to node[above]
    {{\small $-M'_3$}} (v3); \draw[thick,<-] (v6) to node[left] {{\small $-M'_6$}} (v7);
    \draw[thick,<-] (v8) to node[left] {{\small $-M'_8$}} (v9); \draw[thick,<-] (v10) to node[below]
    {{\small $-M'_{10}$}} (v11); \draw[thick,->,bend left=10] (v3) to node[right] {{\small
        $-m'_{2}$}} (v2); \draw[thick,<-,bend left=-10] (v3) to node[left] {{\small $M'_{2}$}} (v2);
    \draw[thick,->,bend left=10] (v5) to node[above] {{\small $-m'_{4}$}} (v4); \draw[thick,<-,bend
    left=-10] (v5) to node[below] {{\small $M'_{4}$}} (v4); \draw[thick,<-,bend left=10] (v6) to
    node[above] {{\small $-m'_{5}$}} (v5); \draw[thick,->,bend left=-10] (v6) to node[below]
    {{\small $M'_{5}$}} (v5); \draw[thick,<-,bend left=10] (v8) to node[left] {{\small $-m'_{7}$}}
    (v7); \draw[thick,->,bend left=-10] (v8) to node[right] {{\small $M'_{7}$}} (v7);
    \draw[thick,<-,bend left=10] (v10) to node[below] {{\small $-m'_{9}$}} (v9); \draw[thick,->,bend
    left=-10] (v10) to node[above] {{\small $M'_{9}$}} (v9); \draw[thick,<-,bend left=10] (v12) to
    node[below] {{\small $-m'_{11}$}} (v11); \draw[thick,->,bend left=-10] (v12) to node[above]
    {{\small $M'_{11}$}} (v11); \draw[thick,->,bend left=10] (v1) to node[right] {{\small
        $-m'_{12}$}} (v12); \draw[thick,<-,bend left=-10] (v1) to node[left] {{\small $M'_{12}$}}
    (v12);
  \end{tikzpicture}
  \caption{The flow network for $n=13$ and $T^*=\{1,3,6,8,10\}$.}
  \label{fig:flow_network_costs}
\end{figure}

We conclude the proof of Theorem~\ref{thm:even_wheels} by proving the following lemma.
\begin{lemma}\label{lem:no_negative_cycles}
  The network $N(\vect x,T^*)$ does not contain a directed cycle of negative cost.
\end{lemma}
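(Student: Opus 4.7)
Our plan is to prove Lemma~\ref{lem:no_negative_cycles} by a case analysis on the structure of simple directed cycles in $N(\vect x, T^*)$. The arcs of the network split into those incident with the hub $O$ and those connecting two consecutive outer nodes. Since $n-1$ is even, the outer subgraph is a bipartite $(n-1)$-cycle with respect to the parity of the indices. By inspection of the construction, for each outer edge $i$ the arc from the odd-indexed endpoint of $\{i, i+1\}$ to the even-indexed endpoint exists with cost $-M'_i$ (if $i \in T^*$) or $-m'_i$ (if $i \notin T^*$), while the arc in the opposite direction exists only when $i \notin T^*$ and has cost $M'_i$. It follows that every simple directed cycle is either (I) entirely within the outer subgraph, in which case it must be the full $(n-1)$-cycle traversed in one of the two rotational directions (feasible only when $T^*$ lies entirely in one parity class), or (II) uses $O$ exactly once and has the form $O \to u_1 \to u_2 \to \cdots \to u_k \to O$, where $u_1, \ldots, u_k$ is a directed path along the outer cycle.

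For cycles of type~(I), we compute the total cost explicitly as a signed sum of the $M'_i$ and $m'_i$, with signs determined by the parity and $T^*$-membership of each edge. After rearrangement using the identities $m'_i + M'_i = 1 + x_i + x_{i+1} - 2x_n$ and $M'_i - m'_i = |1 - x_i - x_{i+1}|$, a negative value would translate into the claim that a suitably chosen subset $T'$ of the opposite parity class (with $T' \cap (T' + 1) = \emptyset$) satisfies $\phi(T') > \phi(T^*)$, contradicting property~2 that $\phi(T^*) = \Phi^*$.

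For cycles of type~(II), we decompose the cost as $\cost(O, u_1) + \sum_{j=1}^{k-1} \cost(u_j, u_{j+1}) + \cost(u_k, O)$ and telescope along the outer path. The endpoint terms contribute $\pm M_{u_1}, \pm m_{u_1}, \pm M_{u_k}, \pm m_{u_k}$ according to parity and membership in $T^* \cup (T^*+1)$, while each intermediate outer arc contributes some combination of $M'_i$ and $m'_i$. The crucial inequalities we establish are (a) $M_a + M_b \geq M'_i$ for every edge $i \in T^*$ with $\{a, b\} = \{i, i+1\}$, verified by case analysis on the positions of $x_a, x_b$ relative to $1 - x_n$ and using the bound $x_i + x_{i+1} + x_n \leq 2$ from property~3; and (b) $m'_k \leq M_k + M_{k+1} \leq M'_k$ for every interior edge $k$ satisfying $\{k-1, k, k+1\} \cap T^* = \emptyset$, which follows from property~4 (the maximality of $T^*$ forces $x_k + x_{k+1} + x_n \notin [1, 2]$, and either extreme gives the inequality directly). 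Applying these bounds node-by-node along the path, the intermediate contributions cancel against the endpoint terms to give a non-negative total cost.

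The main obstacle will be the case analysis for type~(II) cycles whose outer path spans multiple $T^*$-blocks or crosses repeatedly between $T^*$ and $T^*+1$ segments, since the signs of the endpoint $O$-arc costs and the intermediate outer-arc costs depend intricately on parity and $T^*$-membership, and the telescoping must correctly pair each intermediate contribution with a suitable bound. Properties~2,~3, and~4 of $T^*$ are each invoked at different points---property~2 handles type~(I), property~3 establishes the block inequality~(a), and property~4 controls the interior inequality~(b)---so the argument requires carefully tracking which property applies at each edge and vertex along the cycle.
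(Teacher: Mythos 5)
Your classification of the simple directed cycles, and your treatment of the cycles lying entirely in the outer ring (your type (I)), agree with the paper: there, too, a negative full outer cycle is converted into a parity-flipped alternating set $T$ with $\phi(T)>\phi(T^*)$, contradicting $\phi(T^*)=\Phi^*$. The gap is in your plan for the cycles through $O$ (type (II)). You propose to prove non-negativity of such a cycle by telescoping purely local inequalities derived from properties 3 and 4, explicitly reserving the optimality property 2 for type (I). This cannot work, because properties 1, 3 and 4 alone do not exclude negative type-(II) cycles. Concretely, take $n=7$, $\vect x=(0.1,\,0.8,\,0.8,\,0.1,\,0,\,0,\,0.1)$ and $T^*=\{1,3\}$. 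Then $T^*\cap(T^*+1)=\emptyset$, the triangle sums $x_i+x_{i+1}+x_7$ equal $1.0,\,1.7,\,1.0$ for $i=1,2,3$ and are below $1$ for $i=4,5,6$, so properties 1, 3 and 4 all hold. Yet the cycle $C=(O,1,2,3,4,O)$ has cost $M_1-M'_1+M'_2-M'_3+M_4=0.1-0.9+1.5-0.9+0.1=-0.1<0$. Your inequality (a) holds only with equality at both edges $1$ and $3$ (here $M_1+M_2=0.9=M'_1$ and $M_3+M_4=0.9=M'_3$), and your inequality (b) is vacuous for the interior edge $2$ because $\{1,2,3\}\cap T^*\neq\emptyset$; no combination of these local bounds can absorb the term $+M'_2=1.5$. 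What fails in this example is precisely property 2: $\phi(\{2\})=0.7>0.6=\phi(\{1,3\})$, i.e.\ $T^*$ is a maximal but non-optimal alternating set, and the negative cycle detects exactly this.

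Consequently any correct argument must invoke $\phi(T^*)=\Phi^*$ for the cycles through $O$ as well. This is what the paper does: from a minimal negative cycle $C=(O,i,\dots,j+1,O)$ it builds the set $T=\left(T^*\setminus[i,j]\right)\cup\{k\in[i,j]\,:\,k\text{ even and }1\leq x_k+x_{k+1}+x_n\leq 2\}$ and shows $\phi(T)-\phi(T^*)\geq-\cost(C)>0$, contradicting optimality. The local inequalities you describe do appear in the paper's appendix, but only in a supporting role: they restrict where a minimal negative cycle can enter and leave the ring (Lemmas~\ref{lem:case_3_possible_starts_1}--\ref{lem:case_3_possible_ends_2}), which reduces the verification of the identity relating $\cost(C)$ to $\phi(T)-\phi(T^*)$ to four endpoint configurations. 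You would need to replace your telescoping step for type (II) by this, or an equivalent global exchange argument. A small further omission: the outer ring also contains directed $2$-cycles $i\to i+1\to i$ for $i\notin T^*$; these have cost $M'_i-m'_i\geq 0$ and are harmless, but your type (I) case should account for them.
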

\begin{proof}
  Suppose there is a negative cost cycle and let $C$ be a negative cost cycle with the minimum
  number of arcs. We will argue that we can use $C$ to modify $T^*$ to obtain a set $T$ with
  $T\cap(T+1)=\emptyset$ and $\phi(T)>\phi(T^*)$, which is the required contradiction. If $C$ is a
  ``backward'' cycle, that is, $C=(1,n-1,n-2,\dots,2,1)$ or $C=(O,i+1,i,i-1,\dots,j,O)$, then in the
  network $N((x_2,x_3,\dots,x_{n-1},x_1,x_n),T^*-1)$, the cycle $C'=(1,2,\dots,n-1,1)$ or
  $C'=(O,i-1,i,\dots,j)$ is a cycle of negative cost. As a consequence, we can assume that $C$ is a
  forward cycle, that is, we are in one of the following two cases, where $[i,j]$ denotes the set
  $\{i,i+1,\dots,j\}$ and the elements of $[n-1]$ are arranged cyclically so that $n-1$ is followed
  by $1$.   
  \begin{description}
  \item[Case 1] $C=(1,2,\dots,n-1)$. Then
    \[T^*=\left\{i\in[n-1]\,:\,i\text{ odd and }1\leq x_i+x_{i+1}+x_n\leq 2\right\},\]
    and we can take $T=\left\{i\in[n-1]\,:\,i\text{ even and }x_i+x_{i+1}+x_n\leq 2\right\}$.
  \item[Case 2] $C=(O,i,i+1,\dots,j,j+1,O)$. Then
   \[T^*\cap[i,j]=\left\{k\in[i,j]\,:\,k\text{ odd and }1\leq x_k+x_{k+1}+x_n\leq 2\right\},\]  and
    we can take
    $T=\left(T^*\setminus[i,j]\right)\cup\left\{k\in[i,j]\,:\,k\text{ even and }1\leq
      x_k+x_{k+1}+x_n\leq 2\right\}$.
    \end{description}
    The basic idea is that $-\cost(C)$ is a lower bound for $\phi(T)-\phi(T^*)$. The verification is
    straightforward but rather tedious, and we have decided to provide the details in
    Appendix~\ref{app:proof_details}.
\end{proof}

\section{Complete split graphs: Proof of Theorem~\ref{thm:complete_split_graphs}}\label{sec:split_graphs}
In this section, we prove that for a complete split graph with clique $V_1$ and independent set
$V_2$, $\pi[f](P)=X(f)$ where $P$ is the polytope described by the following inequalities:
\begin{align}
  y_{ij} &\geq x_i+x_j-1 && i\in V_1,\,j\in V_2,\label{eq:McCormick_LB}\\
  y_{ij} &\leq \min\{x_i,\,x_j\} && 1\leq i<j\leq n,\label{eq:McCormick_UB}\\
y(E^*(V_1\cup S)) &\geq \alpha x(V_1\cup S)-\binom{\alpha+1}{2} && S\subseteq V_2,\,0\leq\lvert S\rvert\leq n_1-1,\,1\leq
                                                        \alpha\leq n_1-1. \label{eq:clique}
\end{align}
We will prove this by providing a construction for sets $X_i$ as required by
Theorem~\ref{thm:characterisation}. To avoid certain case distinctions, we start by showing
that we can assume that $\vect x$ is in the interior of $[0,1]^n$.
\begin{lemma}\label{lem:interior_of_cube}
  Suppose there exists $\vect x\in[0,1]^n$ such that whenever
  $X_1,\dots,X_n\subseteq[0,1]$ satisfy $\mu(X_i)=x_i$ for all $i\in[n]$ then $\LB_P(\vect
  x)<\sum_{ij\in E}\mu(X_i\cap X_j)$. Then there is such a vector $\vect x$ with $0<x_i<1$ for all $i\in[n]$. 
\end{lemma}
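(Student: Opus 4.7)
The plan is to reformulate the lemma as a statement about two continuous functions on $[0,1]^n$ and then invoke density of the interior. Define
\[
g(\vect x)=\inf\left\{\sum_{ij\in E}\mu(X_i\cap X_j)\,:\,X_1,\dots,X_n\in\mathcal L,\ \mu(X_i)=x_i\text{ for all }i\in[n]\right\}.
\]
Since every measurable subset of $[0,1]$ is a symmetric-difference limit of elements of $\mathcal L$, this coincides with the infimum taken over all measurable $X_i\subseteq[0,1]$ with the prescribed measures. The hypothesis of the lemma is therefore $\LB_P(\vect x)<g(\vect x)$ at some $\vect x\in[0,1]^n$, and the conclusion is that such an $\vect x$ can be found in $(0,1)^n$. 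My approach is to show that both $\LB_P$ and $g$ are continuous, so that the bad set $B=\{\vect x\in[0,1]^n\,:\,\LB_P(\vect x)<g(\vect x)\}$ is relatively open in $[0,1]^n$; since $(0,1)^n$ is dense in $[0,1]^n$, any nonempty relatively open subset meets $(0,1)^n$.

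Continuity of $\LB_P$ is a standard parametric-LP fact. $\LB_P(\vect x)$ is the value of the linear program $\min\sum_{ij\in E}y_{ij}$ over $(\vect x,\vect y)\in P$ with $\vect x$ fixed; the constraints defining $P$ are affine in $(\vect x,\vect y)$ and the feasible region in $\vect y$-space is a nonempty bounded polytope for every $\vect x\in[0,1]^n$ (nonemptiness holds because each defining inequality of $P$ is valid for $\BQP_n(G)$, hence satisfied by every convex combination of $0$-$1$ solutions). The value function of such an LP is piecewise linear, convex, and in particular Lipschitz. For $g$, I would establish $\lvert g(\vect x)-g(\vect x')\rvert\leq 2\lvert E\rvert\lVert\vect x-\vect x'\rVert_\infty$ directly: given near-optimal $X_1,\dots,X_n\in\mathcal L$ for $\vect x$, form $X'_i\in\mathcal L$ with $\mu(X'_i)=x'_i$ by inserting a half-open interval into $[0,1)\setminus X_i$ when $x'_i>x_i$ and deleting one from $X_i$ when $x'_i<x_i$ (such an insertion fits since $\mu([0,1)\setminus X_i)=1-x_i\geq x'_i-x_i$). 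Because $\mu(X_i\triangle X'_i)\leq\lVert\vect x-\vect x'\rVert_\infty=:\delta$, each $\mu(X'_i\cap X'_j)$ exceeds $\mu(X_i\cap X_j)$ by at most $2\delta$; summing over the edges and sending the slack in the first step to zero gives the desired Lipschitz estimate.

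Combining these two continuity assertions with the density of $(0,1)^n$ in $[0,1]^n$ completes the proof. I expect the main obstacle to be the careful verification of the Lipschitz bound for $g$: one has to ensure the perturbed sets stay in $\mathcal L$, that the required insertions always fit inside $[0,1)$ (and that deletions in fact decrease the objective rather than shuffling mass badly), and that the edgewise crude estimate $\mu(X'_i\cap X'_j)-\mu(X_i\cap X_j)\leq 2\delta$ really does hold after the modification. No new ideas seem to be required — only some bookkeeping — but this is where attention is needed.
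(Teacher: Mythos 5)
Your proposal is correct and takes essentially the same route as the paper: both arguments rest on continuity of the LP value function $\LB_P(\vect x)$ in the right-hand side together with a perturbation bound for the minimal total intersection measure $g(\vect x)$, the paper merely packaging this as a direct contradiction at a single perturbed point (replacing the sets attached to coordinates $x_i\in\{0,1\}$ by $\emptyset$ or $[0,1)$, at cost $\eps$ per edge) instead of via openness of the bad set plus density of $(0,1)^n$. The only details to tidy are that the mass $x'_i-x_i$ may have to be inserted as several half-open intervals of $[0,1)\setminus X_i$ rather than one (which changes nothing), and that identifying the lemma's hypothesis with the strict inequality $\LB_P(\vect x)<g(\vect x)$ uses that the infimum defining $g$ is attained, an assumption the paper also makes implicitly by writing it as a minimum.
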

\begin{proof}
  For $\vect x\in[0,1]^n$ set
  \[\overline\mu(\vect x)=\min\left\{\sum_{ij\in E}\mu(X_i\cap X_j)\,:\,\mu(X_i)=x_i\text{ for all
      }i\in V\right\}.\]
  Suppose the statement of the lemma is wrong, and let $\vect x\in[0,1]$ be a vector such that
  $\LB_P(\vect x)=\overline\mu(\vect x)-\delta$ with $\delta>0$. Let $\vect x'$ be a vector obtained
  from $\vect x$ by setting
  \[x'_i=
    \begin{cases}
      x_i & \text{if }0<x_i<1,\\
      \eps & \text{if }x_i=0,\\
      1-\eps & \text{if }x_i=1.
    \end{cases}    
  \]
  By assumption $\overline\mu(\vect x')=\LB_P(\vect x')$. Let $X'_1,\dots,X'_n\subset[0,1)$ with
  $\mu(X'_i)=x'_i$ for all $i\in V$ and
  \[\sum_{ij\in E}\mu(X'_i\cap X'_j)=\overline\mu(\vect x')=\LB_P(\vect x').\]
  We define sets $X_1,\dots,X_n$ by setting $X_i=X'_i$ if $0<x_i<1$, $X_i=\emptyset$ if $x_i=0$ and
  $X_i=[0,1)$ if $x_i=1$. For $\eps>0$ sufficiently small,
  \[\overline\mu(\vect x)\leq\sum_{ij\in E}\mu(X_i\cap X_j)\leq\sum_{ij\in E}\mu(X'_i\cap X'_j)+\eps
    n(n-1)=\LB_P(\vect x')+\eps n(n-1)<\LB_P(\vect x')+\delta/2.\]
  The linear programs defining $\LB_P(\vect x)$ and $\LB_P(\vect x')$ differ only in the right-hand
  sides of their constraints, hence by choosing $\eps$ sufficiently small, we can assume that
  $\LB_P(\vect x')<\LB(\vect x)+\delta/2$. This implies $\overline\mu(\vect x)<\LB_P(\vect
  x)+\delta=\overline\mu(\vect x)$, which is the required contradiction. 
\end{proof}
Using Lemma~\ref{lem:interior_of_cube} we can assume without loss of generality that
$1>x_{n_1+1}\geq x_{n_1+2}\geq\dots\geq x_{n}>0$ and $1>x_1\geq x_2\geq\dots\geq x_{n_1}>0$. We set
$X_i=[0,x_i)$ for $i\in V_2$, and construct the sets $X_i$ for $i\in V_1$ greedily. Suppose the sets
$X_{j}$ for $j<i$ have been specified. Then $X_i$ is chosen to minimize
\[\sum_{j=n_1+1}^n\mu(X_i\cap X_j)+\sum_{j=1}^{i-1}\mu(X_i\cap X_j).\]
More precisely, this can be done as described in Algorithm~\ref{alg:construction}.
\begin{algorithm}
  \caption{Construction of the sets $X_i$.}\label{alg:construction}
  \begin{tabbing}
    .....\=.....\=.....\=............................. \kill \\[-1ex]   
    Initialize $(a_0,a_1,\dots,a_{n_2},a_{n_2+1})=(1,x_{n_1+1},x_{n_1+2},\dots,x_{n},0)$\\[1ex]
    \textbf{for} $i=1,2,\dots,n_1$ \textbf{do}\\[1ex]
    \> $p\leftarrow\min\{k\,:\,a_k+x_i<1\}$\\[1ex]
    \> $X_i\leftarrow[a_{p-1},\,1)\cup[a_p,\,a_p+a_{p-1}+x_i-1)$\\[1ex]
    \> \textbf{if} $p=1$ \textbf{then}\\[1ex]
    \> \> $a_{1}\leftarrow a_{1}+x_i$\\[1ex]
    \> \textbf{else}\\[1ex]
    \> \>
    $(a_{p-1},a_{p},\dots,a_{n_2})\leftarrow(a_p+a_{p-1}+x_i-1,\,a_{p+1},\,a_{p+2},\dots,\,a_{n_2+1})$\\[2ex]
    \textbf{return} $X_1,\dots,X_{n_1}$
  \end{tabbing}
\end{algorithm}
\begin{example}\label{ex:split_graph_algorithm}
  Let $n_1=4$, $n_2=5$ and $\vect x=\left(0.85,0.8,0.7,0.5,0.8,0.6,0.5,0.3,0.1\right)$ The steps of
  Algorithm~\ref{alg:construction} are illustrated in Figures~\ref{fig:example_step_0}
  to~\ref{fig:example_step_4}.  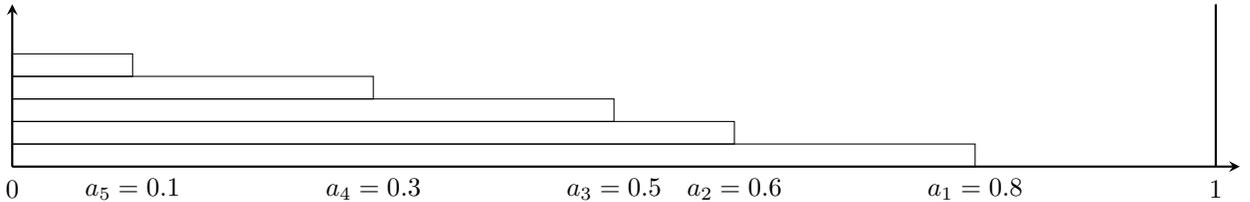
\begin{figure}[htb]
  \centering
  \begin{tikzpicture}[xscale=1.6,yscale=.3]
       \draw[thick,->] (0,0) to (10.2,0);
       \draw[thick,->] (0,0) to (0,7.2);
       \draw[thick] (10,0) to (10,7.2);
       \node at (0,-1) {{\small $0$}};
       \node at (10,-1) {{\small $1$}};
       \draw (0,0) rectangle (8,1);
       \draw (0,1) rectangle (6,2);
       \draw (0,2) rectangle (5,3);
       \draw (0,3) rectangle (3,4);
       \draw (0,4) rectangle (1,5);
       \node at (1,-1) {{\small $a_5=0.1$}};
       \node at (3,-1) {{\small $a_4=0.3$}};
       \node at (5,-1) {{\small $a_3=0.5$}};
       \node at (6,-1) {{\small $a_2=0.6$}};
       \node at (8,-1) {{\small $a_1=0.8$}};
  \end{tikzpicture}
  \caption{$X_5=[0,0.8)$, $X_6=[0,0.6)$, $X_7=[0,0.5)$, $X_8=[0,0.3)$, $X_9=[0,0.1)$.}
  \label{fig:example_step_0}
\end{figure}
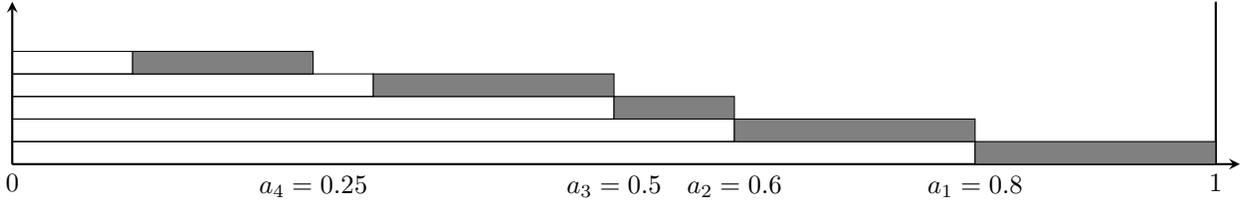
\begin{figure}[htb]
  \centering
  \begin{tikzpicture}[xscale=1.6,yscale=.3]
       \draw[thick,->] (0,0) to (10.2,0);
       \draw[thick,->] (0,0) to (0,7.2);
       \draw[thick] (10,0) to (10,7.2);
       \node at (0,-.8) {{\small $0$}};
       \node at (10,-.8) {{\small $1$}};
       \draw (0,0) rectangle (8,1);
       \draw (0,1) rectangle (6,2);
       \draw (0,2) rectangle (5,3);
       \draw (0,3) rectangle (3,4);
       \draw (0,4) rectangle (1,5);
       \draw[fill=gray] (8,0) rectangle (10,1);
       \draw[fill=gray] (6,1) rectangle (8,2);
       \draw[fill=gray] (5,2) rectangle (6,3);
       \draw[fill=gray] (3,3) rectangle (5,4);
       \draw[fill=gray] (1,4) rectangle (2.5,5);
       \node at (2.5,-1) {{\small $a_4=0.25$}};
       \node at (5,-1) {{\small $a_3=0.5$}};
       \node at (6,-1) {{\small $a_2=0.6$}};
       \node at (8,-1) {{\small $a_1=0.8$}};
  \end{tikzpicture}
  \caption{$X_1=[0.3,1)\cup[0.1,0.25)$.}
  \label{fig:example_step_1}
\end{figure}
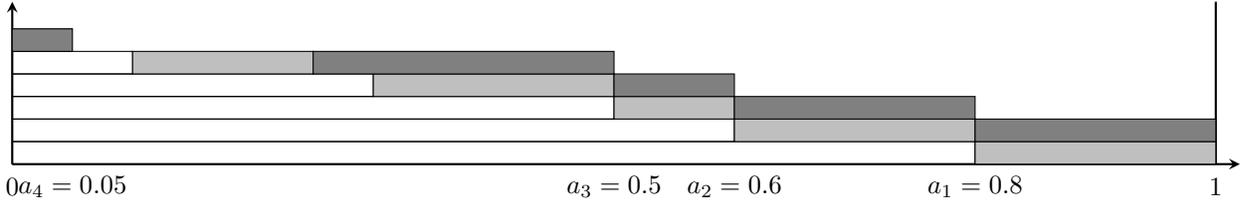
\begin{figure}[htb]
  \centering
  \begin{tikzpicture}[xscale=1.6,yscale=.3]
       \draw[thick,->] (0,0) to (10.2,0);
       \draw[thick,->] (0,0) to (0,7.2);
       \draw[thick] (10,0) to (10,7.2);
       \node at (0,-1) {{\small $0$}};
       \node at (10,-1) {{\small $1$}};
       \draw (0,0) rectangle (8,1);
       \draw (0,1) rectangle (6,2);
       \draw (0,2) rectangle (5,3);
       \draw (0,3) rectangle (3,4);
       \draw (0,4) rectangle (1,5);
       \draw[fill=lightgray] (8,0) rectangle (10,1);
       \draw[fill=lightgray] (6,1) rectangle (8,2);
       \draw[fill=lightgray] (5,2) rectangle (6,3);
       \draw[fill=lightgray] (3,3) rectangle (5,4);
       \draw[fill=lightgray] (1,4) rectangle (2.5,5);
       \draw[fill=gray] (8,1) rectangle (10,2);
       \draw[fill=gray] (6,2) rectangle (8,3);
       \draw[fill=gray] (5,3) rectangle (6,4);
       \draw[fill=gray] (2.5,4) rectangle (5,5);
       \draw[fill=gray] (0,5) rectangle (0.5,6);
       \node at (.5,-1) {{\small $a_4=0.05$}};
       \node at (5,-1) {{\small $a_3=0.5$}};
       \node at (6,-1) {{\small $a_2=0.6$}};
       \node at (8,-1) {{\small $a_1=0.8$}};
  \end{tikzpicture}
  \caption{$X_2=[0.25,1)\cup[0,0.05)$.}
  \label{fig:example_step_2}
\end{figure}
\begin{figure}[htb]
  \centering
  \begin{tikzpicture}[xscale=1.6,yscale=.3]
       \draw[thick,->] (0,0) to (10.2,0);
       \draw[thick,->] (0,0) to (0,7.2);
       \draw[thick] (10,0) to (10,7.2);
       \node at (0,-1) {{\small $0$}};
       \node at (10,-1) {{\small $1$}};
       \draw (0,0) rectangle (8,1);
       \draw (0,1) rectangle (6,2);
       \draw (0,2) rectangle (5,3);
       \draw (0,3) rectangle (3,4);
       \draw (0,4) rectangle (1,5);
       \draw[fill=lightgray] (8,0) rectangle (10,1);
       \draw[fill=lightgray] (6,1) rectangle (8,2);
       \draw[fill=lightgray] (5,2) rectangle (6,3);
       \draw[fill=lightgray] (3,3) rectangle (5,4);
       \draw[fill=lightgray] (1,4) rectangle (2.5,5);
       \draw[fill=lightgray] (8,1) rectangle (10,2);
       \draw[fill=lightgray] (6,2) rectangle (8,3);
       \draw[fill=lightgray] (5,3) rectangle (6,4);
       \draw[fill=lightgray] (2.5,4) rectangle (5,5);
       \draw[fill=lightgray] (0,5) rectangle (0.5,6);
       \draw[fill=gray] (8,2) rectangle (10,3);
       \draw[fill=gray] (6,3) rectangle (8,4);
       \draw[fill=gray] (5,4) rectangle (6,5);
       \draw[fill=gray] (0.5,5) rectangle (2.5,6);
       \node at (2.5,-1) {{\small $a_3=0.25$}};
       \node at (6,-1) {{\small $a_2=0.6$}};
       \node at (8,-1) {{\small $a_1=0.8$}};
  \end{tikzpicture}
  \caption{$X_3=[0.5,1)\cup[0.05,0.25)$.}
  \label{fig:example_step_3}
\end{figure}
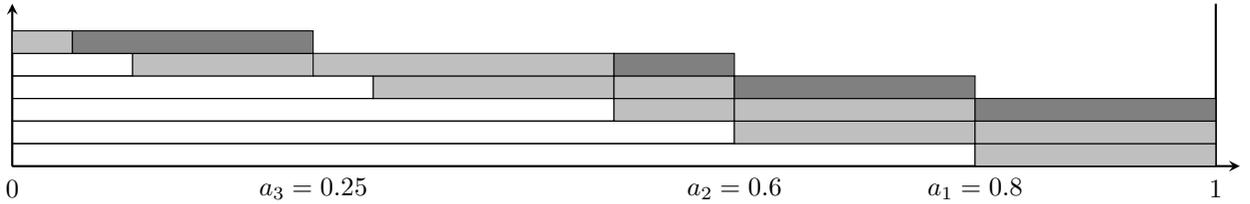
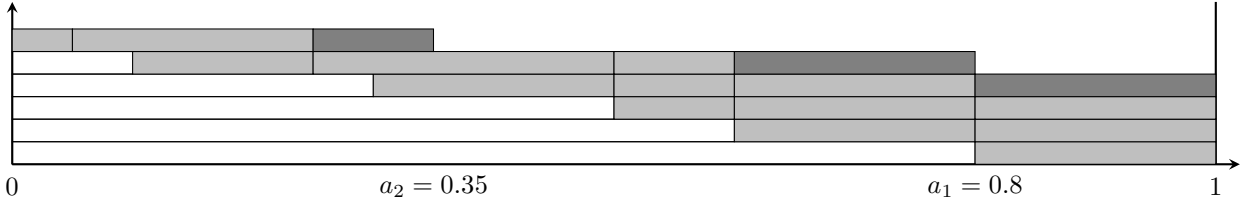
\begin{figure}[htb]
  \centering
  \begin{tikzpicture}[xscale=1.6,yscale=.3]
       \draw[thick,->] (0,0) to (10.2,0);
       \draw[thick,->] (0,0) to (0,7.2);
       \draw[thick] (10,0) to (10,7.2);
       \node at (0,-1) {{\small $0$}};
       \node at (10,-1) {{\small $1$}};
       \draw (0,0) rectangle (8,1);
       \draw (0,1) rectangle (6,2);
       \draw (0,2) rectangle (5,3);
       \draw (0,3) rectangle (3,4);
       \draw (0,4) rectangle (1,5);
       \draw[fill=lightgray] (8,0) rectangle (10,1);
       \draw[fill=lightgray] (6,1) rectangle (8,2);
       \draw[fill=lightgray] (5,2) rectangle (6,3);
       \draw[fill=lightgray] (3,3) rectangle (5,4);
       \draw[fill=lightgray] (1,4) rectangle (2.5,5);
       \draw[fill=lightgray] (8,1) rectangle (10,2);
       \draw[fill=lightgray] (6,2) rectangle (8,3);
       \draw[fill=lightgray] (5,3) rectangle (6,4);
       \draw[fill=lightgray] (2.5,4) rectangle (5,5);
       \draw[fill=lightgray] (0,5) rectangle (0.5,6);
       \draw[fill=lightgray] (8,2) rectangle (10,3);
       \draw[fill=lightgray] (6,3) rectangle (8,4);
       \draw[fill=lightgray] (5,4) rectangle (6,5);
       \draw[fill=lightgray] (0.5,5) rectangle (2.5,6);
       \draw[fill=gray] (8,3) rectangle (10,4);
       \draw[fill=gray] (6,4) rectangle (8,5);
       \draw[fill=gray] (2.5,5) rectangle (3.5,6);         
       \node at (3.5,-1) {{\small $a_2=0.35$}};
       \node at (8,-1) {{\small $a_1=0.8$}};
  \end{tikzpicture}
  \caption{$X_4=[0.6,1)\cup[0.25,0.35)$.}
  \label{fig:example_step_4}
\end{figure}
 From
  Figure~\ref{fig:example_step_4} we see that
  \[\sum_{ij\in E}\mu(X_i\cap X_j) =
    0.35\binom{6}{2}+0.45\binom{5}{2}+0.2\binom{4}{2}-0.6-2\times0.5-3\times0.3-4\times 0.1=8.05.\]
  In order to see that this is a lower bound for $y(E)$, whenever $(\vect x,\vect y)$
  satisfies~(\ref{eq:McCormick_LB}),~(\ref{eq:McCormick_UB}) and~(\ref{eq:clique}), we start
  with~(\ref{eq:clique}) for $S=\{7,8,9\}$ and $t=3$:
  \[y(E^*(V\setminus\{5,6\}))\geq 3\times 3.75-\binom{4}{2}=5.25.\] With~(\ref{eq:McCormick_UB}) for
  the pairs $(i,j)$ with $i,j\in\{7,8,9\}$ and $i<j$, we obtain
  \[y(E(V\setminus\{5,6\}))\geq 5.25-0.3-2\times 0.1=4.75.\] Finally, we add the edges incident with
  nodes $5$ or $6$, using~(\ref{eq:McCormick_LB}) for the lower bound:
  \[y(E(V))\geq 4.75+0.65+0.6+0.5+0.3+0.45+0.4+0.3+0.1=8.05.\]
\end{example}
To prove that the sets $X_i$ constructed by Algorithm~\ref{alg:construction} have the required
properties we will extend the algorithm so that it also produces a set $S\subseteq V_2$ with the
following properties:
\begin{enumerate}
\item $\lvert S\rvert\leq n_1-1$.
\item $\mu(X_i\cap X_j)=\max\{0,x_i+x_j-1\}$ for all $i\in V_1$, $j\in V_2\setminus S$,
\item There exists $\alpha\in\{0,1,\dots,n_1-1\}$ such that for every $t\in[0,1)$,
  \[\left\lvert\{i\in V_1\cup S\,:\,t\in X_i\}\right\rvert\in\{\alpha,\alpha+1\}.\]
\end{enumerate}
Actually, the number $\alpha$ in the third condition is
$\alpha=\left\lfloor x(V_1\cup S)\right\rfloor$. Assuming the existence of such a set $S$, the
theorem is proved as follows.
\begin{proof}[Proof of Theorem~\ref{thm:complete_split_graphs}]
  Let $\vect y\in[0,1]^{n(n-1)/2}$ be such that $(\vect x,\vect y)\in P$. We need to check that
  $y(E)\geq\sum_{ij\in E}\mu(X_i\cap X_j)$. Let the elements of $S$ be $j_1,j_2,\dots,j_k$
  such that $j_1<j_2<\dots<j_k$. By the third condition on $S$, there is a set $Y\subseteq[0,1)$
  with
  \[\left\lvert\{i\in V_1\cup S\,:\,t\in X_i\}\right\rvert=
    \begin{cases}
      \alpha & \text{for }t\in Y,\\
      \alpha+1 & \text{for }t\in[0,1)\setminus Y.
    \end{cases}
  \]
  Then $x(V_1\cup S)=\alpha+\mu([0,1)\setminus Y)=\alpha+1-\mu(Y)$, and
  \begin{multline*}
    \sum_{i,j\in V_1\cup S,\,i<j}\mu(X_i\cap
    X_j)=\mu(Y)\binom{\alpha}{2}+\left(1-\mu(Y)\right)\binom{\alpha+1}{2}=\binom{\alpha+1}{2}-\mu(Y)\alpha\\
    =\binom{\alpha+1}{2}-(\alpha+1-x(V_1\cup S))\alpha=\alpha x(V_1\cup S)-\binom{\alpha+1}{2}.
  \end{multline*}
  Together with the second condition on $S$, we obtain
  \begin{multline*}
    \sum_{ij\in E}\mu(X_i\cap
    X_j)=\alpha x(V_1\cup S)-\binom{\alpha+1}{2}-x_{j_2}-2x_{j_3}-\dots-(k-1)x_{j_k}\\
    +\sum_{i\in V_1}\sum_{j\in V_2\setminus S}\max\{0,x_i+x_j-1\}.
  \end{multline*}
  Now 
  \[ y(E)=y(E^*(V_1\cup S))-y(E^*(S))+\sum_{i\in V_1}\sum_{j\in V_2\setminus S}y_{ij},\]
  and the required inequality follows from
  \begin{align*}
    y(E^*(V_1\cup S)) &\geq \alpha x(V_1\cup S)-\binom{\alpha+1}{2} &&\text{by~(\ref{eq:clique})},\\
    y(E^*(S)) &\leq x_{j_2}+2x_{j_3}+\dots+(k-1)x_{j_k}
                                                                    &&\text{by~(\ref{eq:McCormick_UB}),\and}\\
    \sum_{i\in V_1}\sum_{j\in V_2\setminus S}y_{ij} &\geq \sum_{i\in V_1}\sum_{j\in V_2\setminus S}\max\{0,x_i+x_j-1\}&&\text{by~(\ref{eq:McCormick_LB}).}\qedhere
  \end{align*}  
\end{proof}
What remains to be done is to construct the set $S$. Algorithm~\ref{alg:extended_construction} does not exactly do that, but it
returns sets $A_p$ from which we can read off the set $S$ easily.
\begin{algorithm}
  \caption{Construction of the sets $X_i$ and $A_p$.}\label{alg:extended_construction}
  \begin{tabbing}
    .....\=.....\=.....\=............................. \kill \\[-1ex]   
    Initialize $(a_0,a_1,\dots,a_{n_2},a_{n_2+1})=(1,x_{n_1+1},x_{n_1+2},\dots,x_{n},0)$\\[1ex]
    Initialize $(A_1,\dots,A_{n_2},A_{n_2+1})=(\{n_1+1\},\{n_1+2\},\dots,\{n\},\{n+1\})$\\[1ex]
    \textbf{for} $i=1,2,\dots,n_1$ \textbf{do}\\[1ex]
    \> $p\leftarrow\min\{k\,:\,a_k+x_i<1\}$\\[1ex]
    \> $X_i\leftarrow[a_{p-1},\,1)\cup[a_p,\,a_p+a_{p-1}+x_i-1)$\\[1ex]
    \> \textbf{if} $p=1$ \textbf{then}\\[1ex]
    \> \> $a_{1}\leftarrow a_{1}+x_i$\\[1ex]
    \> \> $A_1\leftarrow A_1\cup\{i\}$\\[1ex]
    \> \textbf{else}\\[1ex]
    \> \> $a_{p-1}\leftarrow a_p+a_{p-1}+x_i-1$\\[1ex]
    \> \> $A_{p-1}\leftarrow A_p\cup A_{p-1}\cup\{i\}$\\[1ex]
    \> \> \textbf{for} $q=p,p+1,\dots,n_2$ \textbf{do}\\[1ex]
    \> \> \> $a_q\leftarrow a_{q+1}$\\[1ex]
    \> \> \> $A_q\leftarrow A_{q+1}$\\[2ex]
    \textbf{return} $X_1,\dots,X_{n_1}$ and $A_1,\dots,A_{n_2}$
  \end{tabbing}
\end{algorithm}
Let $k$ be the number of $A_p$ with $A_p\cap V_2\neq\emptyset$ when the algorithm terminates, that is,
$0<a_k\leq a_{k-1}\leq\dots\leq a_1<1$. Set $j_p=\max A_p$ for all $p\in[k]$, and let
$j^*=\min(A_{p_0}\cap V_2)$ where $p_0=\min\{p\,:\,\lvert A_p\rvert>1\}$ is the smallest index such
that $A_p$ has been updated by the algorithm. Note that $j_p\in V_2$ for $p<k$, and
$j_k\in\{n,n+1\}$. An important observation for our arguments below is
that $x_{j_p}$ lies between $a_{p+1}$ and $a_p$, and $x_{j^*}$ between $a_{p_0}$ and
$x_{j_{p_0-1}}=a_{p_0-1}$, that is, setting $x_{n+1}=0$ for convenience, we have
\begin{multline*}
  0=a_{k+1}\leq x_{j_k}\leq a_k\leq x_{j_{k-1}}\leq a_{k-1}\leq\dots \leq a_{p_0+1}\leq x_{j_{p_0}}\leq
  a_{p_0}\leq x_{j^*}\leq x_{j_{p_0-1}}=a_{p_0-1}\\
  \leq x_{j_{p_0-2}}= a_{p_0-2}\leq\dots \leq x_{j_2}=a_2\leq x_{j_1}=a_1<1.
\end{multline*}
We are now prepared to specify the set $S$:
\[S=
  \begin{cases}
    V_2\setminus\{j_1,j_2,\dots,j_k,j^*\} & \text{if }p_0\geq 2\text{ or }\lvert A_1\cap
    V_2\rvert=\lvert A_1\cap V_1\rvert+1,\\
    V_2\setminus\{j_1,j_2,\dots,j_k\} & \text{if }p_0=1\text{ and }\lvert A_1\cap
    V_2\rvert\leq \lvert A_1\cap V_1\rvert.
  \end{cases}
\]
Before proving that the set $S$ has the required properties we illustrate the construction in an
example.
\begin{example}\label{ex:split_graphs_2}
  Let $n_1=8$, $n_2=13$, and
\begin{multline*}
  \vect x=(0.9,\,0.85,\,0.53,\,0.49,\,0.44,\,0.23,\,0.16,\,0.1,\\
  0.96,\,0.89,\,0.82,\,0.75,\,0.67,\,0.59,\,0.55,\,0.45,\,0.38,\,0.29,\,0.22,\,0.15,\,0.07)
\end{multline*}
The algorithm proceeds as follows.
\begin{description}
\item[Step 0]
  \begin{align*}
    (a_0,\dots,a_{14}) &=
                         (1,\,0.96,\,0.89,\,0.82,\,0.75,\,0.67,\,0.59,\,0.55,\,0.45,\,0.38,\,0.29,\,0.22,\,0.15,\,0.07,\,0),\\
    (A_1,\dots,A_{14}) &= (\{9\},\,\{10\},\,\{11\},\,\{12\},\,\{13\},\,\{14\},\,\{15\},\,\{16\},\,\{17\},\,\{18\},\,\{19\},\,\{20\},\,\{21\},\,\{22\}).
  \end{align*}
\item[Step 1]
  \begin{align*}
    X_1 &= [0.07,0.12)\cup[0.15,1),\\
    (a_0,\dots,a_{13}) &=
                         (1,\,0.96,\,0.89,\,0.82,\,0.75,\,0.67,\,0.59,\,0.55,\,0.45,\,0.38,\,0.29,\,0.22,\,0.12,\,0),\\
    (A_1,\dots,A_{13}) &= (\{9\},\,\{10\},\,\{11\},\,\{12\},\,\{13\},\,\{14\},\,\{15\},\,\{16\},\,\{17\},\,\{18\},\,\{19\},\,\{1,20,21\},\,\{22\}).
  \end{align*}
\item[Step 2]
  \begin{align*}
    X_2 &= [0.12,0.19)\cup[0.22,1),\\
    (a_0,\dots,a_{12}) &= (1,\,0.96,\,0.89,\,0.82,\,0.75,\,0.67,\,0.59,\,0.55,\,0.45,\,0.38,\,0.29,\,0.19,\,0),\\
    (A_1,\dots,A_{12}) &= (\{9\},\,\{10\},\,\{11\},\,\{12\},\,\{13\},\,\{14\},\,\{15\},\,\{16\},\,\{17\},\,\{18\},\,\{1,2,19,20,21\},\,\{22\}).
  \end{align*}
\item[Step 3]
  \begin{align*}
    X_3 &= [0.45,0.53)\cup[0.55,1),\\
    (a_0,\dots,a_{11}) &= (1,\,0.96,\,0.89,\,0.82,\,0.75,\,0.67,\,0.59,\,0.53,\,0.38,\,0.29,\,0.19,\,0),\\
    (A_1,\dots,A_{11}) &= (\{9\},\,\{10\},\,\{11\},\,\{12\},\,\{13\},\,\{14\},\,\{3,15,16\},\,\{17\},\,\{18\},\,\{1,2,19,20,21\},\,\{22\}).
  \end{align*}
\item[Step 4]
  \begin{align*}
    X_4 &= [0.38,0.4)\cup[0.53,1),\\
    (a_0,\dots,a_{10}) &= (1,\,0.96,\,0.89,\,0.82,\,0.75,\,0.67,\,0.59,\,0.4,\,0.29,\,0.19,\,0),\\
    (A_1,\dots,A_{10}) &= (\{9\},\,\{10\},\,\{11\},\,\{12\},\,\{13\},\,\{14\},\,\{3,4,15,16,17\},\,\{18\},\,\{1,2,19,20,21\},\,\{22\}).
  \end{align*}
\item[Step 5]
  \begin{align*}
    X_5 &= [0.4,0.43)\cup[0.59,1),\\
    (a_0,\dots,a_{9}) &= (1,\,0.96,\,0.89,\,0.82,\,0.75,\,0.67,\,0.43,\,0.29,\,0.19,\,0),\\
    (A_1,\dots,A_{9}) &= (\{9\},\,\{10\},\,\{11\},\,\{12\},\,\{13\},\,\{3,4,5,14,15,16,17\},\,\{18\},\,\{1,2,19,20,21\},\,\{22\}).
  \end{align*}
\item[Step 6]
  \begin{align*}
    X_6 &= [0.75,0.8)\cup[0.82,1),\\
    (a_0,\dots,a_{8}) &= (1,\,0.96,\,0.89,\,0.8,\,0.67,\,0.43,\,0.29,\,0.19,\,0),\\
    (A_1,\dots,A_{8}) &= (\{9\},\,\{10\},\,\{6,11,12\},\,\{13\},\,\{3,4,5,14,15,16,17\},\,\{18\},\,\{1,2,19,20,21\},\,\{22\}).
  \end{align*}      
\item[Step 7]
  \begin{align*}
    X_7 &= [0.8,0.85)\cup[0.89,1),\\
    (a_0,\dots,a_{7}) &= (1,\,0.96,\,0.85,\,0.67,\,0.43,\,0.29,\,0.19,\,0),\\
    (A_1,\dots,A_{7}) &= (\{9\},\,\{6,7,10,11,12\},\,\{13\},\,\{3,4,5,14,15,16,17\},\,\{18\},\,\{1,2,19,20,21\},\,\{22\}).
  \end{align*}
\item[Step 8]
  \begin{align*}
    X_8 &= [0.85,0.91)\cup[0.96,1),\\
    (a_0,\dots,a_{6}) &= (1,\,0.91,\,0.67,\,0.43,\,0.29,\,0.19,\,0),\\
    (A_1,\dots,A_{6}) &= (\{6,7,8,9,10,11,12\},\,\{13\},\,\{3,4,5,14,15,16,17\},\,\{18\},\,\{1,2,19,20,21\},\,\{22\}).
  \end{align*}        
\end{description}
As $p_0=1$, $\lvert A_1\cap V_2\rvert=4=\lvert A_1\cap V_2\rvert+1$, we obtain
$S=\{10,11,14,15,16,19,20\}$. The outcome is illustrated in Figure~\ref{fig:example}.
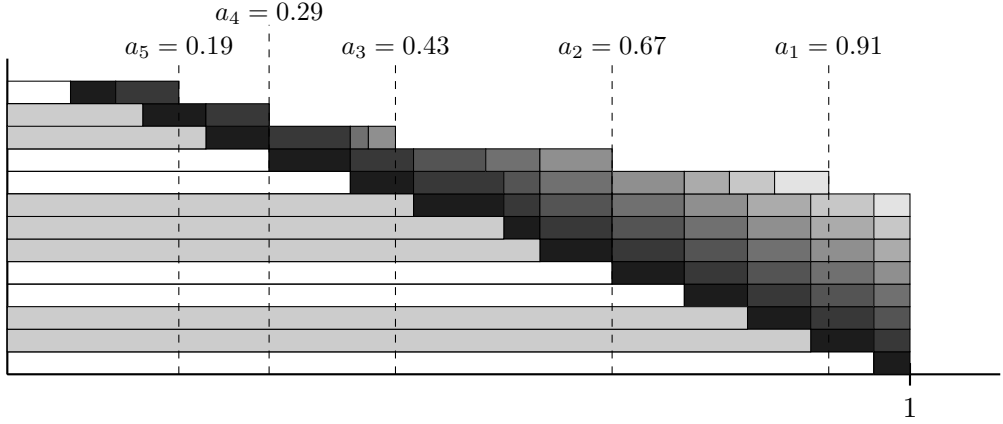
\begin{figure}[htb]
  \centering
  \begin{tikzpicture}[xscale=12,yscale=.3]
\draw[thick] (0,0) -- (1.1,0);
\draw[thick] (0,0) -- (0,14);
\draw[thick] (1,.5) -- (1,-.5);
\node at (1,-1.5) {1};
\draw (0,0) rectangle (24/25,1);
\draw (0,1) rectangle (89/100,2);
\draw (0,2) rectangle (41/50,3);
\draw (0,3) rectangle (3/4,4);
\draw (0,4) rectangle (67/100,5);
\draw (0,5) rectangle (59/100,6);
\draw (0,6) rectangle (11/20,7);
\draw (0,7) rectangle (9/20,8);
\draw (0,8) rectangle (19/50,9);
\draw (0,9) rectangle (29/100,10);
\draw (0,10) rectangle (11/50,11);
\draw (0,11) rectangle (3/20,12);
\draw (0,12) rectangle (7/100,13);
\draw[fill=white!11!black](24/25,0) rectangle (1,1);
\draw[fill=white!11!black](89/100,1) rectangle (24/25,2);
\draw[fill=white!11!black](41/50,2) rectangle (89/100,3);
\draw[fill=white!11!black](3/4,3) rectangle (41/50,4);
\draw[fill=white!11!black](67/100,4) rectangle (3/4,5);
\draw[fill=white!11!black](59/100,5) rectangle (67/100,6);
\draw[fill=white!11!black](11/20,6) rectangle (59/100,7);
\draw[fill=white!11!black](9/20,7) rectangle (11/20,8);
\draw[fill=white!11!black](19/50,8) rectangle (9/20,9);
\draw[fill=white!11!black](29/100,9) rectangle (19/50,10);
\draw[fill=white!11!black](11/50,10) rectangle (29/100,11);
\draw[fill=white!11!black](3/20,11) rectangle (11/50,12);
\draw[fill=white!11!black](7/100,12) rectangle (3/25,13);
\draw[fill=white!22!black](24/25,1) rectangle (1,2);
\draw[fill=white!22!black](89/100,2) rectangle (24/25,3);
\draw[fill=white!22!black](41/50,3) rectangle (89/100,4);
\draw[fill=white!22!black](3/4,4) rectangle (41/50,5);
\draw[fill=white!22!black](67/100,5) rectangle (3/4,6);
\draw[fill=white!22!black](59/100,6) rectangle (67/100,7);
\draw[fill=white!22!black](11/20,7) rectangle (59/100,8);
\draw[fill=white!22!black](9/20,8) rectangle (11/20,9);
\draw[fill=white!22!black](19/50,9) rectangle (9/20,10);
\draw[fill=white!22!black](29/100,10) rectangle (19/50,11);
\draw[fill=white!22!black](11/50,11) rectangle (29/100,12);
\draw[fill=white!22!black](3/25,12) rectangle (19/100,13);
\draw[fill=white!33!black](24/25,2) rectangle (1,3);
\draw[fill=white!33!black](89/100,3) rectangle (24/25,4);
\draw[fill=white!33!black](41/50,4) rectangle (89/100,5);
\draw[fill=white!33!black](3/4,5) rectangle (41/50,6);
\draw[fill=white!33!black](67/100,6) rectangle (3/4,7);
\draw[fill=white!33!black](59/100,7) rectangle (67/100,8);
\draw[fill=white!33!black](11/20,8) rectangle (59/100,9);
\draw[fill=white!33!black](9/20,9) rectangle (53/100,10);
\draw[fill=white!44!black](24/25,3) rectangle (1,4);
\draw[fill=white!44!black](89/100,4) rectangle (24/25,5);
\draw[fill=white!44!black](41/50,5) rectangle (89/100,6);
\draw[fill=white!44!black](3/4,6) rectangle (41/50,7);
\draw[fill=white!44!black](67/100,7) rectangle (3/4,8);
\draw[fill=white!44!black](59/100,8) rectangle (67/100,9);
\draw[fill=white!44!black](53/100,9) rectangle (59/100,10);
\draw[fill=white!44!black](19/50,10) rectangle (2/5,11);
\draw[fill=white!56!black](24/25,4) rectangle (1,5);
\draw[fill=white!56!black](89/100,5) rectangle (24/25,6);
\draw[fill=white!56!black](41/50,6) rectangle (89/100,7);
\draw[fill=white!56!black](3/4,7) rectangle (41/50,8);
\draw[fill=white!56!black](67/100,8) rectangle (3/4,9);
\draw[fill=white!56!black](59/100,9) rectangle (67/100,10);
\draw[fill=white!56!black](2/5,10) rectangle (43/100,11);
\draw[fill=white!67!black](24/25,5) rectangle (1,6);
\draw[fill=white!67!black](89/100,6) rectangle (24/25,7);
\draw[fill=white!67!black](41/50,7) rectangle (89/100,8);
\draw[fill=white!67!black](3/4,8) rectangle (4/5,9);
\draw[fill=white!78!black](24/25,6) rectangle (1,7);
\draw[fill=white!78!black](89/100,7) rectangle (24/25,8);
\draw[fill=white!78!black](4/5,8) rectangle (17/20,9);
\draw[fill=white!89!black](24/25,7) rectangle (1,8);
\draw[fill=white!89!black](17/20,8) rectangle (91/100,9);
\draw[fill=white!80!black](0,1) rectangle (89/100,2);
\draw[fill=white!80!black](0,2) rectangle (41/50,3);
\draw[fill=white!80!black](0,5) rectangle (59/100,6);
\draw[fill=white!80!black](0,6) rectangle (11/20,7);
\draw[fill=white!80!black](0,7) rectangle (9/20,8);
\draw[fill=white!80!black](0,10) rectangle (11/50,11);
\draw[fill=white!80!black](0,11) rectangle (3/20,12);
\draw[dashed] (91/100,0) -- (91/100,14);
\node at (0.91,14.5) {{\small $a_1=0.91$}};
\draw[dashed] (67/100,0) -- (67/100,14);
\node at (0.67,14.5) {{\small $a_2=0.67$}};
\draw[dashed] (43/100,0) -- (43/100,14);
\node at (0.43,14.5) {{\small $a_3=0.43$}};
\draw[dashed] (29/100,0) -- (29/100,15.5);
\node at (0.29,16) {{\small $a_4=0.29$}};
\draw[dashed] (19/100,0) -- (19/100,14);
\node at (0.19,14.5) {{\small $a_5=0.19$}};
\end{tikzpicture}
  \caption{The outcome of Algorithm~\ref{alg:extended_construction} for the vector $\vect x$ in Example~\ref{ex:split_graphs_2}.}
  \label{fig:example}
\end{figure}
\end{example}
The next three lemmas establish the three required properties of the set $S$.
\begin{lemma}\label{lem:size_of_S}
  $\lvert S\rvert\leq n_1-1$.
\end{lemma}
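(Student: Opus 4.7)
The argument is combinatorial, based on tracking Algorithm~\ref{alg:extended_construction}. Let $r$ be the number of iterations that take the \textbf{if} branch (with $p=1$) and $s=n_1-r$ the number that take the \textbf{else} branch; each of the latter reduces the count of alive sets by one, so the algorithm terminates with exactly $N=n_2+1-s$ alive sets $A_1,\dots,A_N$ which correspond to a partition of the original labels $\{n_1+1,\dots,n+1\}$ into consecutive intervals $I_1<I_2<\dots<I_N$. Because the artificial vertex $n+1$ starts in the top set and stays there after every shift, $n_2+1\in I_N$.

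The first step is to establish two accounting identities: for every $q<N$,
\[
|A_q\cap V_2|=|I_q|\qquad\text{and}\qquad |A_q\cap V_1|=|I_q|-1+r\cdot\delta_{q,1},
\]
where $\delta_{q,1}$ equals $1$ when $q=1$ and $0$ otherwise. The first holds because $A_q\cap V_2=\{n_1+t\,:\,t\in I_q\}$ when $n_2+1\notin I_q$; the second follows from the fact that each merge adds exactly one $V_1$-vertex to the absorbing block, together with the $r$ additional contributions to $A_1$ from the $p=1$ iterations. In particular $j_q=\max A_q\in V_2$ whenever $q<N$, and hence $|V_2\setminus\{j_1,\dots,j_k\}|=n_2-(N-1)=s$ regardless of whether we are in Case A ($A_N\cap V_2\neq\emptyset$, $k=N$, $j_N=n+1$) or Case B ($A_N=\{n+1\}$, $k=N-1$).

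In the first branch of the definition of $S$ I would then show that $j^*\notin\{j_1,\dots,j_k\}$, which yields $|S|=s-1\leq n_1-1$. Since $j^*\in A_{p_0}\cap V_2$, equality $j^*=j_q$ forces $q=p_0$. If $p_0=N$ this contradicts $j_N=n+1\notin V_2$; otherwise $p_0<N$, and by the minimality of $p_0$ together with the $V_1$-accounting identity (every $V_1$ vertex entering a non-singleton block arrives via a merge which simultaneously grows $|I_{p_0}|$ by at least one), we get $|I_{p_0}|\geq 2$, so $A_{p_0}$ contains at least two $V_2$-vertices and $j^*=\min(A_{p_0}\cap V_2)<\max A_{p_0}=j_{p_0}$.

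Finally, in the second branch of the definition of $S$ we have $|S|=s$, so the bound reduces to $r\geq 1$. Substituting the identities above into the case hypothesis $|A_1\cap V_2|\leq|A_1\cap V_1|$ yields $|I_1|\leq|I_1|-1+r$, i.e., $r\geq 1$, as required. The main technical obstacle is the degenerate subcase $N=1$, in which $A_1$ is itself the top and the identity $|A_1\cap V_2|=|I_1|$ must be modified to $|A_1\cap V_2|=|I_1|-1$; verifying compatibility between the two branches of the definition of $S$ and the resulting count of $|S|$ in this edge case is where most of the remaining technical work sits.
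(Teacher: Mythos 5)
Your accounting framework is sound and is in fact a sharper version of what the paper does: the paper's proof simply tracks the number of blocks meeting $V_2$ (it drops by one per iteration except for $p=1$ iterations and for merges that only absorb the block containing the artificial label $n+1$), concludes $k\geq n_2-n_1$, and splits into three cases on $k$, asserting in each case which branch of the definition of $S$ is active. Your identities $\lvert A_q\cap V_2\rvert=\lvert I_q\rvert$ and $\lvert A_q\cap V_1\rvert=\lvert I_q\rvert-1+r\delta_{q,1}$ (for $q<N$) encode exactly the same information and make the branch analysis cleaner. Two points, one minor and one serious. The minor one: your parenthetical justification that ``every $V_1$ vertex entering a non-singleton block arrives via a merge'' is false for the block sitting at position $1$, which also receives the $r$ if-branch vertices without any growth of $I_1$; so your derivation of $\lvert I_{p_0}\rvert\geq 2$ does not work as stated when $p_0=1$. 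It is rescued by the branch hypothesis itself: in the first branch with $p_0=1$ your identity turns $\lvert A_1\cap V_2\rvert=\lvert A_1\cap V_1\rvert+1$ into $r=0$, after which every update to $A_1$ is a merge and $\lvert I_1\rvert\geq2$ follows. For $p_0\geq 2$ the block never occupied position $1$, so the claim is fine there.

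The serious point is that the case you defer as ``remaining technical work'' is not routine, and it is exactly where the argument (and the paper's own proof) breaks. Take $N=1$ with $r=0$: then $s=n_2\leq n_1$, $A_1$ contains everything including $n+1$, $p_0=1$, $\lvert A_1\cap V_2\rvert=n_2\leq n_1=\lvert A_1\cap V_1\rvert$, so the second branch applies, $j_1=n+1\notin V_2$, and $\lvert S\rvert=n_2$, which exceeds $n_1-1$ whenever $n_2=n_1$. This is reachable: for $n_1=n_2=2$ and $\vect x=(0.55,\,0.5,\,0.6,\,0.4)$ the algorithm performs two merges, ends with the single block $A_1=\{1,2,3,4,5\}$, and produces $S=\{3,4\}$ with $\lvert S\rvert=2>n_1-1=1$. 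The paper's Case 2 disjunction ``$j_k=n$ or $p_0\geq 2$ or $\lvert A_1\cap V_2\rvert=\lvert A_1\cap V_1\rvert+1$'' also fails on this instance, so you should not expect the edge case to close by more careful counting alone; as stated, the lemma needs either an additional hypothesis such as $n_2>n_1$ or a repaired branch condition in the definition of $S$ (note that putting this instance into the first branch, i.e.\ also deleting $j^*$, would restore $\lvert S\rvert=n_1-1$).
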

\begin{proof}
 The number of sets $A_p$ with $A_p\cap V_2\neq\emptyset$ is initially equal to $n_2$ and in each iteration it drops by
 1, except when $a_1+x_i<1$ or $a_k+x_i\geq 1$. Therefore, $k\geq n_2-n_1$, and we have the
 following cases:
 \begin{description}
 \item[Case 1] $k=n_2-n_1$. Then $p_0\geq 2$ or $\lvert A_1\cap
    V_2\rvert=\lvert A_1\cap V_1\rvert+1$, hence $\lvert S\rvert=n_2-(k+1)=n_1-1$.
  \item[Case 2] $k=n_2-n_1+1$. Then $j_k=n$ or $p_0\geq 2$ or $\lvert A_1\cap
    V_2\rvert=\lvert A_1\cap V_1\rvert+1$, hence $\lvert S\rvert\leq n_2-k=n_1-1$.
  \item[Case 3] $k\geq n_2-n_1+2$. Then $\lvert S\rvert\leq n_2-(k-1)\leq n_1-1$.\qedhere    
 \end{description}
\end{proof}
\begin{lemma}\label{lem:small_intersections}
  $\mu(X_i\cap X_j)=\max\{0,x_i+x_j-1\}$ for all $i\in V_1$, $j\in V_2\setminus S$.
\end{lemma}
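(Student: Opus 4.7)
The lower bound $\mu(X_i\cap X_j)\geq\max\{0,x_i+x_j-1\}$ is immediate from inclusion--exclusion applied to $\mu(X_i\cup X_j)\leq 1$, so the content of the lemma is the matching upper bound. Since $X_j=[0,x_j)$ for $j\in V_2$, my plan is to compute $\mu(X_i\cap[0,t))$ directly from the description of $X_i$ supplied by Algorithm~\ref{alg:extended_construction}. Writing $a'_\bullet$ for the bucket heights at the start of the iteration in which $i$ is processed and $p$ for the index chosen by the algorithm (so $a'_p+x_i<1\leq a'_{p-1}+x_i$), we have $X_i=[a'_{p-1},1)\cup[a'_p,\,a'_p+a'_{p-1}+x_i-1)$, and a short case analysis gives
\[
  \mu\bigl(X_i\cap[0,t)\bigr)=
  \begin{cases}
    0 & \text{if }t\leq a'_p,\\
    t-a'_p & \text{if }a'_p<t\leq a'_p+a'_{p-1}+x_i-1,\\
    a'_{p-1}+x_i-1 & \text{if }a'_p+a'_{p-1}+x_i-1<t\leq a'_{p-1},\\
    x_i+t-1 & \text{if }t>a'_{p-1}.
  \end{cases}
\]
This expression coincides with the target $\max\{0,x_i+t-1\}$ exactly for $t\leq a'_p$ and $t\geq a'_{p-1}$, and strictly exceeds it on the open interval $(a'_p,a'_{p-1})$. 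Hence the lemma reduces to the combinatorial claim: at every iteration and for every $j\in V_2\setminus S$, one has $x_j\not\in(a'_p,a'_{p-1})$.

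To establish this I would carry an inductive invariant on the state of the algorithm: for each current bucket $A'_q$ containing a $V_2$-element, $a'_{q+1}\leq x_{\max(A'_q\cap V_2)}\leq a'_q$, with equality $x_{\max(A'_q\cap V_2)}=a'_q$ whenever $A'_q$ has never participated in a $p\geq 2$ merge. The invariant holds at initialisation (where $A'_q=\{n_1+q\}$ and $a'_q=x_{n_1+q}$) and is preserved by both branches: the $p=1$ update only enlarges $a'_1$ and adjoins a $V_1$-index, leaving $\max(A'_1\cap V_2)$ unchanged; the $p\geq 2$ update merges $A'_p$ and $A'_{p-1}$ into a bucket whose $V_2$-maximum equals $\max(A'_p\cap V_2)$ (every $V_2$-index exceeds the $V_1$-index $i$), now sandwiched between the reduced height $a'_p+a'_{p-1}+x_i-1$ and the height of the bucket immediately below.

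The combinatorial claim then follows by locating each $j\in V_2\setminus S$ in the current configuration. For $j=j_p$ with $p<p_0$, the final bucket $A_p$ is a singleton that is never involved in a $p\geq 2$ merge, so at every iteration $j$ occupies a singleton at a position $q^*\notin\{p_{\text{iter}}-1,p_{\text{iter}}\}$ with $x_{j}=a'_{q^*}$; a comparison of the index $q^*$ with $p_{\text{iter}}$ places $x_j$ either strictly above $a'_{p_{\text{iter}}-1}$ or at most $a'_{p_{\text{iter}}}$. For $j=j_p$ with $p\geq p_0$, the observation that $j$ remains the $V_2$-maximum of its current bucket rules out $q^*=p_{\text{iter}}-1$ (the post-merge maximum would otherwise be larger, contradicting maximality in $A_p^{\text{final}}$), and the remaining cases $q^*\geq p_{\text{iter}}$ and $q^*\leq p_{\text{iter}}-2$ both yield $x_{j}\in[a'_{q^*+1},a'_{q^*}]$ disjoint from $(a'_{p_{\text{iter}}},a'_{p_{\text{iter}}-1})$.

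The main obstacle is the handling of $j^*=\min(A_{p_0}\cap V_2)$, which is kept in $V_2\setminus S$ only under a subtle cardinality condition ($p_0\geq 2$ or $p_0=1$ with $\lvert A_1\cap V_2\rvert=\lvert A_1\cap V_1\rvert+1$). The key is to show that in this case $x_{j^*}$ arises as the current height $a'_{p_0-1}$ at the iteration that first assembles $A_{p_0}$ by a $p\geq 2$ merge, so that $x_{j^*}$ becomes and remains a boundary value thereafter; in the complementary case, all merges into $A_1$ proceed via the $p=1$ branch and $x_{j^*}$ never plays the role of a boundary, so it is safely absorbed into $S$. Verifying this dichotomy carefully is the crux of the argument, but once completed, the inductive invariant together with the position-tracking above completes the proof.
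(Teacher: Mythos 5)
Your reduction is sound and is essentially the one the paper uses implicitly: since $X_j=[0,x_j)$ for $j\in V_2$ and each $X_i$, $i\in V_1$, has the form $[a'_{p-1},1)\cup[a'_p,\,a'_p+a'_{p-1}+x_i-1)$, the identity $\mu(X_i\cap X_j)=\max\{0,x_i+x_j-1\}$ holds precisely when $x_j\notin(a'_p,a'_{p-1})$, and your piecewise formula for $\mu(X_i\cap[0,t))$ is correct. The paper reaches the same conclusion more quickly by invoking the ordering chain $0=a_{k+1}\leq x_{j_k}\leq a_k\leq\dots\leq a_{p_0}\leq x_{j^*}\leq a_{p_0-1}\leq\dots\leq x_{j_1}=a_1<1$ recorded just before the lemma together with the bucket membership $i\in A_q$: if $q\leq p$ then $X_i\subseteq[x_{j_p},1)$ and the intersection is empty, while if $q>p$ then $X_i\supseteq[x_{j_p},1)$ and the intersection has measure $x_i+x_{j_p}-1$ (and analogously for $j^*$ with threshold $p_0$). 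Your inductive invariant on bucket heights and $V_2$-maxima is in effect a proof of that chain, so the two arguments are the same in substance, and your treatment of the singleton buckets $\{j_p\}$, $p<p_0$, which are never touched by any merge, is clean.

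The genuine gap is the case $j=j^*$, which you explicitly defer (``verifying this dichotomy carefully is the crux''). This is not a routine verification: $j^*$ is exactly the element whose membership in $V_2\setminus S$ depends on the two-branch definition of $S$, and what you need is that at every iteration inserting some $i\in V_1$ at positions $p-1,p$, the value $x_{j^*}$ satisfies $x_{j^*}\geq a'_{p-1}$ or $x_{j^*}\leq a'_p$; equivalently, that $x_{j^*}$ remains sandwiched between the height of the bucket currently containing $j^*$ and the height of the bucket immediately above it, through all subsequent merges in which that bucket participates as upper or as lower partner. Moreover, your stated key claim — that $x_{j^*}$ ``arises as the current height $a'_{p_0-1}$'' at the assembling iteration — is not right as written: for $p_0=1$ one has $a'_{p_0-1}=a'_0=1\neq x_{j^*}$ (in Example~\ref{ex:split_graphs_2}, $x_{j^*}=x_9=0.96$ while $a_0=1$). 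What is true is that $x_{j^*}$ equals the height of the upper bucket (the still-singleton $\{j^*\}$) at the first merge that absorbs it, i.e.\ $a'_{p-1}$ for the merge index $p$ of that iteration, and the sandwiching property must then be propagated forward. Until this invariant — which is precisely the content of the paper's unproved observation $a_{p_0}\leq x_{j^*}\leq a_{p_0-1}$ — is actually established, the proof is incomplete at its most delicate point.
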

\begin{proof}
  Fix $j\in V_2\setminus S$, $i\in V_1$. Then $j=j_p$ for some $p$ or $j=j^*$, and $i\in A_q$ for
  some $q$. First assume $j=j_p$. If $q\leq p$ then $X_i\subseteq[x_j,1)$, hence
  $X_i\cap X_j=\emptyset$, and if $q>p$, then $X_i\supseteq[x_j,1)$, hence $\mu(X_i\cap
  X_j)=x_i+x_j-1$. Now assume $j=j^*$. If $q<p_0$ then $X_i\subseteq[x_j,1)$, hence
  $X_i\cap X_j=\emptyset$, and if $q\geq p_0$, then $X_i\supseteq[x_j,1)$, hence $\mu(X_i\cap
  X_j)=x_i+x_j-1$.
\end{proof}
Define a function $h:[0,1)\to\ints$ by $h(t)=\lvert\{i\in V_1\cup S\,:\,t\in X_i\}\rvert$. Our third
condition on $S$ says that the function $h$ takes only two consecutive values.
\begin{example}\label{ex:function_h}
  The graph of the function $h$ for the instance in Example~\ref{ex:split_graphs_2} is shown in Figure~\ref{fig:h}.
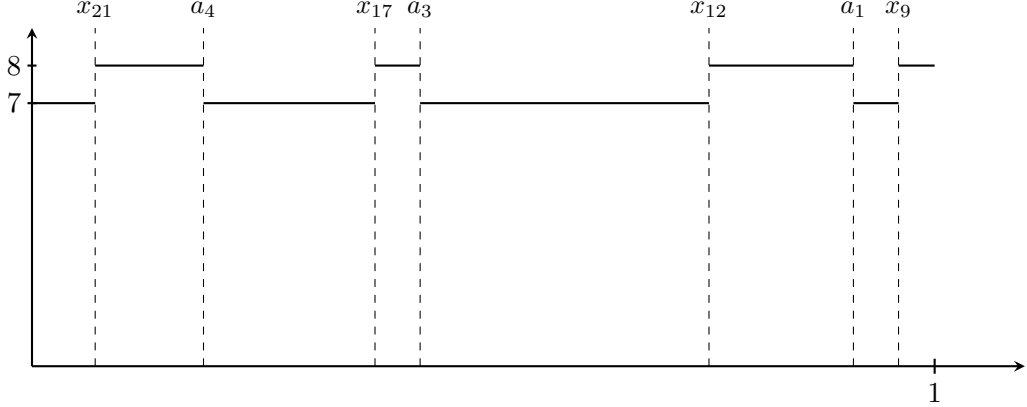
\begin{figure}[htb]
  \centering
  \begin{tikzpicture}[xscale=12,yscale=.5]
    \draw[thick,->] (0,0) -- (1.1,0);
    \draw[thick,->] (0,0) -- (0,9);
    \draw[thick] (1,.2) -- (1,-.2);
    \draw[thick] (-.005,7) -- (.005,7);
    \draw[thick] (-.005,8) -- (.005,8);
    \node at (1,-.7) {1};
    \node at (-.02,7) {7};
    \node at (-.02,8) {8};
    \draw[thick] (0,7) -- (0.07,7);
    \draw[thick] (0.07,8) -- (0.19,8);
    \draw[thick] (0.19,7) -- (0.38,7);
    \draw[thick] (0.38,8) -- (0.43,8);
    \draw[thick] (0.43,7) -- (0.75,7);
    \draw[thick] (0.75,8) -- (0.91,8);
    \draw[thick] (0.91,7) -- (0.96,7);
    \draw[thick] (0.96,8) -- (1,8);
    \draw[dashed] (0.07,0) -- (0.07,9); \node at (0.07,9.5) {{\small $x_{21}$}};
    \draw[dashed] (0.19,0) -- (0.19,9); \node at (0.19,9.5) {{\small $a_4$}};
    \draw[dashed] (0.38,0) -- (0.38,9); \node at (0.38,9.5) {{\small $x_{17}$}};
    \draw[dashed] (0.43,0) -- (0.43,9); \node at (0.43,9.5) {{\small $a_3$}};
    \draw[dashed] (0.75,0) -- (0.75,9); \node at (0.75,9.5) {{\small $x_{12}$}};
    \draw[dashed] (0.91,0) -- (0.91,9); \node at (0.91,9.5) {{\small $a_{1}$}};
    \draw[dashed] (0.96,0) -- (0.96,9); \node at (0.96,9.5) {{\small $x_{9}$}};
  \end{tikzpicture}
  \caption{The function $h$ for Example~\ref{ex:split_graphs_2}.}
  \label{fig:h}
\end{figure}
\end{example}

\begin{lemma}\label{lem:balancedness}
  There exists an integer $\alpha\in\{0,1,\dots,n_1-1\}$ such that $h(t)\in\{\alpha,\alpha+1\}$ for
  all $t\in[0,1)$.  
\end{lemma}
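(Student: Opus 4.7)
My plan is to analyze the function $h$ by matching up its breakpoints via the history of Algorithm~\ref{alg:extended_construction}. I would classify the breakpoints of $h$ on $(0,1)$ into three types: (a) ``start-of-main'' events at $t = a_{p_i-1}^{(i)}$, where $X_i$'s main piece $[a_{p_i-1}^{(i)}, 1)$ begins, (b) ``end-of-tail'' events at $t = a_{p_i-1}^{\mathrm{new}}$, where $X_i$'s tail terminates, and (c) ``end-of-$X_j$'' events at $t = x_j$ for $j \in S \cap V_2$. My goal is to show that at every such breakpoint the jump $h(t^+) - h(t^-)$ lies in $\{-1, 0, +1\}$, and further that across $[0,1)$ the sequence of nonzero jumps alternates in sign, so that $h$ only takes two consecutive integer values.

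The key observation I would exploit is a canonical pairing of events. Each end-of-tail of $X_i$ at value $v = a_{p_i-1}^{\mathrm{new}}$ corresponds to the freshly updated array entry at position $p_i-1$; unless this position is never again accessed, the next iteration $i'$ that touches it produces a matching start event at $v$ (either $X_{i'}$'s main piece if $p_{i'} = p_i$, or $X_{i'}$'s tail if $p_{i'} = p_i - 1$). Symmetrically, every start-of-main of $X_i$ at $v = a_{p_i-1}^{(i)}$ is matched either by an end-of-tail of an earlier iteration (if this value was itself an update) or by an end-of-$X_j$ event at $v = x_{n_1+q}$ (if position $p_i-1$ at time $i$ still held its original value). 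The two-branch definition of $S$ --- excluding $\{j_1, \ldots, j_k\}$ together with $j^*$ in the appropriate case --- is exactly calibrated so that the $V_2$ ends at $x_{j_p}$ and at $x_{j^*}$ are precisely the ones without $V_1$ partners, while the remaining $V_2$ ends $x_j$ for $j \in S$ do complete the pairing. Given this, $h$ is piecewise constant with at most unit jumps; combined with $\int_0^1 h(t)\, dt = x(V_1 \cup S)$ (which follows from $\mu(X_q) = x_q$), this pins down $\alpha = \lfloor x(V_1 \cup S) \rfloor$.

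The main obstacle will be verifying that the residual unmatched events --- unmatched ``end-of-tail'' events at certain final $a_q$ values and unmatched ``end-of-$X_j$'' events at values not hit by any main-piece start --- interleave so that consecutive unit jumps alternate between $+1$ and $-1$. This interleaving is where the case distinction on $p_0$ enters: when $p_0 \geq 2$ the final $A_1$ is a singleton and the rightmost column contributes a clean alternating pattern up to $t = 1$, whereas when $p_0 = 1$ the set $A_1$ contains a mix of $V_1$ and $V_2$ indices, and whether the parity closes up correctly depends on $|A_1 \cap V_1|$ versus $|A_1 \cap V_2|$. Handling these parities --- together with the ``fragmentation'' of earlier tails when subsequent iterations subdivide them (as for $X_1$ in Example~\ref{ex:split_graphs_2}, whose tail $[0.07,0.15)$ at the time of creation ends up as $[0.07, 0.12) \cup [0.15, 0.19)$ in the final picture) --- is the delicate combinatorial part of the argument.
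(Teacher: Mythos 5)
Your plan identifies the right structural ingredients --- the interleaving of the final array values $a_p$ with the excluded abscissae $x_{j_p}$ and $x_{j^*}$, and the role of the case distinction on $p_0$ and on $\lvert A_1\cap V_1\rvert$ versus $\lvert A_1\cap V_2\rvert$ --- but the assertion you actually need, namely that the nonzero unit jumps of $h$ alternate in sign across all of $[0,1)$, \emph{is} the lemma, and you defer it as ``the delicate combinatorial part'' without proving it. As written the argument is incomplete at its crux: the pairing of start-of-main events with end-of-tail events is plausible but never verified, and nothing in the proposal rules out two unmatched events of the same sign occurring consecutively. (Your parenthetical about the tail of $X_1$ being ``fragmented'' is also not quite right: in that example the tail of $X_1$ is $[0.07,0.12)$, and the sets $X_i$ never change after construction --- only the array intervals get subdivided. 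This is a small slip, but it is symptomatic of how error-prone the local bookkeeping is.) The integral identity $\int_0^1 h(t)\,dt=x(V_1\cup S)$ only identifies $\alpha$ \emph{after} the two-value property is known; it cannot substitute for it.

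The paper avoids the event-by-event analysis entirely. It first considers the full coverage function $h_0(t)=\lvert\{i\in V_1\cup V_2:\,t\in X_i\}\rvert$ and uses the fact that the algorithm maintains the invariant $h_0(t)=\gamma+p-1$ for $t\in[a_p,a_{p-1})$, i.e.\ $h_0$ is a unit staircase whose breakpoints are exactly the final values $a_1>\dots>a_k$. The excluded sets $X_j$ for $j\in V_2\setminus S$ are the nested intervals $[0,x_{j_p})$ (and $[0,x_{j^*})$ in the first case), so the number of excluded sets containing $t$ is likewise a unit staircase in $t$ with breakpoints at the $x_{j_p}$ and $x_{j^*}$; by the interleaving $a_{p+1}\leq x_{j_p}\leq a_p$ (and $a_{p_0}\leq x_{j^*}\leq a_{p_0-1}$) the difference $h=h_0-(\text{excluded count})$ oscillates between two consecutive values, giving $\alpha=\gamma-1$ or $\alpha=\gamma$ according to the case. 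If you want to salvage your approach, the cleanest repair is to prove the staircase invariant for $h_0$ by induction over the iterations (each iteration adds a set covering $[a_{p-1},1)\cup[a_p,\,a_p+a_{p-1}+x_i-1)$ and merges positions $p-1$ and $p$, which preserves the staircase); that invariant is exactly the global form of the pairing you were attempting to carry out locally, and once you have it the alternation you were worried about comes for free.
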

\begin{proof}
  We start with the function $h_0$ defined by $h_0(t)=\lvert\{i\in V_1\cup V_2\,:\,t\in
  X_i\}\rvert$. Denoting the value of $h_0$ on the interval $[a_1,1)$ by $\gamma$, we have
  $h_0(t)=\gamma+p-1$ for $t\in[a_{p},a_{p-1})$, $p=1,2,\dots,k+1$.
  \begin{description}
  \item[Case 1] $S=V_2\setminus\{j_1,j_2,\dots,j_k,j^*\}$. Then
    \[h(t)=
      \begin{cases}
        h_0(t)-(p-1)   &\text{for }t\in[a_p,a_{p-1}),\,p=1,2,\dots,p_0-1,\\
        h_0(t)-(p_0-1) &\text{for }t\in[x_{j^*},a_{p_0-1}),\\
        h_0(t)-p_0 &\text{for }t\in[a_{p_0},\,x_{j^*}),\\
        h_0(t)-(p-1) &\text{for }t\in[x_{j_{p-1}},\,a_{p-1}),\,p=p_0+1,\dots,k,\\
        h_0(t)-p &\text{for }t\in[a_p,x_{j_{p-1}}),\,p=p_0+1,\dots,k+1.
      \end{cases}      
    \]
    Substituting $h_0(t)=\gamma+p-1$ for $t\in[a_{p},a_{p-1})$, $p=1,2,\dots,k+1$, we obtain
    \[h(t)=
      \begin{cases}
        \gamma & \text{for }t\in[x_{j^*},1)\\
        \gamma-1 & \text{for }t\in[a_{p_0},x_{j^*}),\\
        \gamma &\text{for }t\in[x_{j_p},a_p),\,p=p_0,\dots,k,\\
        \gamma-1 &\text{for }t\in[a_{p+1},x_{j_p}),\,p=p_0,\dots,k,
      \end{cases}
    \]
    and this concludes the proof with $\alpha=\gamma-1$.
  \item[Case 2] $S=V_2\setminus\{j_1,j_2,\dots,j_k\}$. Then $p_0=1$ and $h(t)=h_0(t)-(p-1)$ for
    $t\in[x_{j_p},x_{j_{p-1}})$, $p=1,2,\dots,k+1$ (with $x_{j_{k+1}}=0$ and
    $x_{j_0}=1$). Substituting $h_0(t)=\gamma+p-1$ for $t\in[a_{p},a_{p-1})$, $p=1,2,\dots,k+1$, we
    obtain
    \[h(t)=
      \begin{cases}
        \gamma & \text{for }t\in[a_p,x_{j_{p-1}}),\,p=1,\dots,k,\\
        \gamma+1 & \text{for }t\in[x_{j_p},a_p),\,p=1,\dots,k+1,
      \end{cases}
    \]
    and this concludes the proof with $\alpha=\gamma$.\qedhere    
  \end{description}

\end{proof}

\section{Conclusion}\label{sec:conclusion}
In this paper, we have extended the results from \cite{Gupte2020} to derive extended formulations
for the graphs of quadratic functions corresponding to even wheel graphs and complete split
graphs. It is a natural question what happens for odd wheels. For odd wheels, the triangle
inequalities are not sufficient. In fact, even adding in all the facets from~\cite{Padberg1989} is
not enough. In Figure~\ref{fig:5-wheel} this is illustrated for the $5$-wheel. For the two points
shown in the picture we have $\pi[f](\vect x,\vect y)=(1/3,\,1/3,\,1/3,\,1/3,\,1/3,\,2/3,\,5/6)$ and
$\pi[f](\vect x,\vect y)=(2/3,\,2/3,\,2/3,\,2/3,\,2/3,\,1/3,\,5/2)$, respectively, hence
$\pi[f](P)\neq X(f)$, because $(\vect x,\vect y)\in\QP_6(W_5)$ implies
\begin{align}
  y(E) &\geq 2x_6+x_1+x_2+x_3+x_4+x_5-2,\label{eq:5-wheel_1}\\
  y(E) &\geq 3x_6+2(x_1+x_2+x_3+x_4+x_5)-5.\label{eq:5-wheel_2}
\end{align}

\def\ca{0.61803398875}
\def\cb{1.61803398875}
\def\sa{1.90211303259}
\def\sb{1.17557050458}
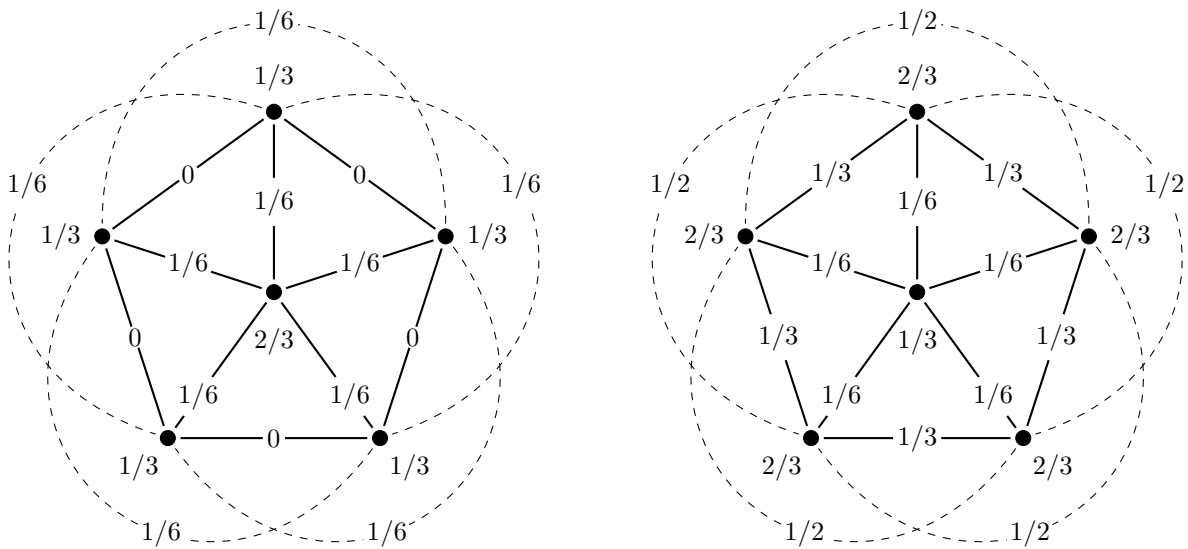
\begin{figure}[htb]
  \centering
  \begin{minipage}{.49\linewidth}
    \centering
    \begin{tikzpicture}[scale=1.2,every node/.style={circle,fill=black,draw,inner sep=2pt,outer sep=2pt}]
\tikzset{n/.style={circle,draw,inner sep=2pt,outer sep=2pt}}
\tikzset{l/.style={draw=none,fill=white,inner sep=.1pt,outer sep=.1pt}}

\clip (0,0) circle (3.5);

\node[label={[label distance=0mm]below:{\small $2/3$}}] (0) at (0,0) {};
\node[label={[label distance=-2mm]above:{\small $1/3$}}] (1) at (0,2) {};
\node[label={[label distance=-1mm]right:{\small $1/3$}}] (2) at (\sa,\ca) {};
\node[label={[label distance=-1mm]below right:{\small $1/3$}}] (3) at (\sb,-\cb) {};
\node[label={[label distance=-1mm]below left:{\small $1/3$}}] (4) at (-\sb,-\cb) {};
\node[label={[label distance=-1mm]left:{\small $1/3$}}] (5) at (-\sa,\ca) {};

\draw[thick] (0) -- (1) node[midway,l] {{\small $1/6$}};
\draw[thick] (0) -- (2) node[midway,l] {{\small $1/6$}};
\draw[thick] (0) -- (3) node[near end,l] {{\small $1/6$}};
\draw[thick] (0) -- (4) node[near end,l] {{\small $1/6$}};
\draw[thick] (0) -- (5) node[midway,l] {{\small $1/6$}};

\draw[thick] (1) -- (2) node[midway,l] {{\small $0$}};
\draw[thick] (2) -- (3) node[midway,l] {{\small $0$}};
\draw[thick] (3) -- (4) node[midway,l] {{\small $0$}};
\draw[thick] (4) -- (5) node[midway,l] {{\small $0$}};
\draw[thick] (1) -- (5) node[midway,l] {{\small $0$}};

\path[dashed] (1) edge[bend left=90,looseness=2] node[pos=0.45,l] {{\small $1/6$}} (3);
\path[dashed] (1) edge[bend right=90,looseness=2] node[pos=0.45,l] {{\small $1/6$}} (4);
\path[dashed] (2) edge[bend left=90,looseness=2] node[pos=0.6,l] {{\small $1/6$}} (4);
\path[dashed] (2) edge[bend right=90,looseness=2] node[midway,l] {{\small $1/6$}} (5);
\path[dashed] (3) edge[bend left=90,looseness=2] node[pos=0.4,l] {{\small $1/6$}} (5);
\end{tikzpicture}
\end{minipage}\hfill
\begin{minipage}{.49\linewidth}
  \centering
  \begin{tikzpicture}[scale=1.2,every node/.style={circle,fill=black,draw,inner sep=2pt,outer sep=2pt}]
\tikzset{n/.style={circle,draw,inner sep=2pt,outer sep=2pt}}
\tikzset{l/.style={draw=none,fill=white,inner sep=.1pt,outer sep=.1pt}}

\clip (0,0) circle (3.5);

\node[label={[label distance=0mm]below:{\small $1/3$}}] (0) at (0,0) {};
\node[label={[label distance=-2mm]above:{\small $2/3$}}] (1) at (0,2) {};
\node[label={[label distance=-1mm]right:{\small $2/3$}}] (2) at (\sa,\ca) {};
\node[label={[label distance=-1mm]below right:{\small $2/3$}}] (3) at (\sb,-\cb) {};
\node[label={[label distance=-1mm]below left:{\small $2/3$}}] (4) at (-\sb,-\cb) {};
\node[label={[label distance=-1mm]left:{\small $2/3$}}] (5) at (-\sa,\ca) {};

\draw[thick] (0) -- (1) node[midway,l] {{\small $1/6$}};
\draw[thick] (0) -- (2) node[midway,l] {{\small $1/6$}};
\draw[thick] (0) -- (3) node[near end,l] {{\small $1/6$}};
\draw[thick] (0) -- (4) node[near end,l] {{\small $1/6$}};
\draw[thick] (0) -- (5) node[midway,l] {{\small $1/6$}};

\draw[thick] (1) -- (2) node[midway,l] {{\small $1/3$}};
\draw[thick] (2) -- (3) node[midway,l] {{\small $1/3$}};
\draw[thick] (3) -- (4) node[midway,l] {{\small $1/3$}};
\draw[thick] (4) -- (5) node[midway,l] {{\small $1/3$}};
\draw[thick] (1) -- (5) node[midway,l] {{\small $1/3$}};

\path[dashed] (1) edge[bend left=90,looseness=2] node[pos=0.45,l] {{\small $1/2$}} (3);
\path[dashed] (1) edge[bend right=90,looseness=2] node[pos=0.45,l] {{\small $1/2$}} (4);
\path[dashed] (2) edge[bend left=90,looseness=2] node[pos=0.6,l] {{\small $1/2$}} (4);
\path[dashed] (2) edge[bend right=90,looseness=2] node[midway,l] {{\small $1/2$}} (5);
\path[dashed] (3) edge[bend left=90,looseness=2] node[pos=0.4,l] {{\small $1/2$}} (5);
\end{tikzpicture}
\end{minipage}
\caption{For the 5-wheel the relaxation of $\BQP_6$ obtained by using all the facets
  from~\cite{Padberg1989} is not sufficient to describe $X(f)$. The vector $(\vect x,\vect y)$
  corresponding to the left picture satisfies all these inequalities and~(\ref{eq:5-wheel_2}), but
  not~(\ref{eq:5-wheel_1}), while vector corresponding to the right picture satisfies the inequalities
  from~\cite{Padberg1989} and~(\ref{eq:5-wheel_1}), but not~(\ref{eq:5-wheel_2}).}
  \label{fig:5-wheel}
\end{figure}

For the 5-wheel it can be checked using polymake~\cite{Assarf2017}, that $\pi[f](P)=X(f)$
where $P$ is the polytope described by the McCormick inequalities, the inequalities
$y_{ij}+y_{jk}+y_{ik}\geq x_i+x_j+x_k-1$ for all triangles in $W_5$ and the
inequalities~(\ref{eq:5-wheel_1}) and~(\ref{eq:5-wheel_2}). This corrects the observation about the
5-wheel in\cite{Gupte2020}, where the triangle inequalities were missing.
\begin{problem}\label{prob:odd-wheel}
  Let $n-1$ be odd, let $f:[0,1]^n\to\reals$ be the function corresponding to the wheel graph
  $G=W_{n-1}$, and let $P\subseteq[0,1]^{3n-2}$ be the polytope described by the
  McCormick inequalities, the inequalities $y_{ij}+y_{jk}+y_{ik}\geq x_i+x_j+x_k-1$ for all
  triangles in $G$, and the two inequalities
  \begin{align*}
  y(E) &\geq \frac{n-2}{2}x_n+\sum_{i=1}^{n-1}x_i-\frac{n-2}{2}, & y(E) &\geq \frac{n}{2}x_n+2\sum_{i=1}^{n-1}x_i-(n-1).
  \end{align*}
  Is it true that $X(f)=\pi[f](P)$?
\end{problem}
As mentioned in the introduction, it would be interesting to characterize the graphs $G$ for which the
polytope $T(G)$ (see Definition~\ref{def:triangle_relaxation}) is sufficient. 
\begin{problem}\label{prob:characterisation}
  Characterise the graphs $G$ such that $\pi[f](T(G))=X(f)$.
\end{problem}
As a first step, one can try to generalise Theorem~\ref{thm:even_wheels} in various directions.
\begin{problem}\label{prob:ext_wheel}
  Let $n=n_1+n_2$ with $n_1$ even, and let $G$ be the graph obtained by joining every vertex of a
  cycle of length $n_1$ to all vertices of an independent set of size $n_2$. Is it true that $\pi[f](T(G))=X(f)$?
\end{problem}
\begin{problem}\label{prob:bipartite_plus_universal}
Let $G$ be a graph on $n$ vertices, where the graph induced on $\{1,\dots,n-1\}$ is bipartite, and
vertex $n$ is adjacent to all vertices in $\{1,\dots,n-1\}$. Is it true that $\pi[f](T(G))=X(f)$?  
\end{problem}
Theorem~\ref{thm:even_wheels} is the special case for Problem~\ref{prob:ext_wheel} with $n_2=1$, and
for Problem~\ref{prob:bipartite_plus_universal} with the induced bipartite graph being a single even
cycle. Another class of graphs for which it seems plausible that $\pi[f](T(G))=X(f)$ are the
$2$-trees, that is, the graphs obtained by starting with a triangle, and repeatedly adding a vertex
and joining to two adjacent vertices (maximal series-parallel graphs).
\begin{problem}
 Is it true that $\pi[f](T(G))=X(f)$ whenever $G$ is a 2-tree?
\end{problem}

\printbibliography

\begin{appendix}
  \section{Proof of Lemma~\ref{lem:no_negative_cycles}}\label{app:proof_details}
  In this appendix we provide the details for the proof of
  Lemma~\ref{lem:no_negative_cycles}. Recall that we have fixed a set $T^*\subseteq[n-1]$ satisfying
  the following conditions:
  \begin{enumerate}
\item $T^*\cap(T^*+1)=\emptyset$, and 
\item $\phi(T^*)=\Phi^*$, and
\item $1\leq x_i+x_{i+1}+x_n\leq 2$ for all $i\in T$, and
\item $T^*\cap\{i-1,i,i+1\}\neq\emptyset$ for all $i\in[n-1]$ with $1\leq x_i+x_{i+1}+x_n\leq 2$.
\end{enumerate}
We have then defined a network $N=N(\vect x,T^*)$, and we have to verify that the network does not
contain negative cost cycles. Let $C$ be a negative cost cycle with the minimum number of arcs. We
discuss the cases indicated in the proof outline separately.  
\subsection{Case 1} $C=(1,2,\dots,n-1)$. By the definition of the arc set of $N$, all elements of
$T^*$ are odd because the arc $(i,i+1)$ does not exist for even $i\in T^*$. The last two
properties of $T^*$ listed above imply
\[T^*=\left\{i\in[n-1]\,:\,i\text{ is odd and }1\leq x_i+x_{i+1}+x_n\leq 2\right\}.\]
We set $T=\left\{i\in[n-1]\,:\,i\text{ is even and }1\leq x_i+x_{i+1}+x_n\leq 2\right\}$, and want
to argue that $\phi(T)>\phi(T^*)$. We start by writing down the cost of $C$ in a convenient form:
\begin{multline*}
  \cost(C)=-\sum_{i\in T^*}M'_i-\sum_{i\in[n-1]\setminus T^*\text{ odd}}m'_i+\sum_{i\in[n-1]\text{
      even}}M'_i \\
  \sum_{i\in T^*}\left(x_n-\max\{1,x_i+x_{i+1}\}\right)+\sum_{\text{odd }i\in[n-1]\setminus T^*}\left(x_n-\min\{1,x_i+x_{i+1}\}\right)+\sum_{\text{even }i\in[n-1]}\left(\max\{1,x_i+x_{i+1}\}-x_n\right).
\end{multline*}
We observe that the $x_n$ cancel, and we split the second sum into the sum over $i\in S_0$ and $i\in
S_2$, where
\begin{align*}
S_0 &= \{i\in[n-1]\,:\,i\text{ odd and }x_i+x_{i+1}+x_n<1\}, & S_2 &= \{i\in[n-1]\,:\,i\text{ odd and }x_i+x_{i+1}+x_n>2\}.   
\end{align*}
In addition, we use
\[\min\{1,x_i+x_{i+1}\}=\max\{1,x_i+x_{i+1}\}-
  \begin{cases}
    1-x_i-x_{i+1} & \text{for }i\in S_0,\\
    x_i+x_{i+1}-1 & \text{for }i\in S_2.
  \end{cases}
\]
Putting this together, we obtain
\begin{multline}\label{eq:cycle_cost}
  \cost(C)=\sum_{\text{even }i\in[n-1]}\max\{1,x_i+x_{i+1}\}-\sum_{\text{odd
    }i\in[n-1]}\max\{1,x_i+x_{i+1}\}\\
  +\sum_{i\in S_0}(1-x_i-x_{i+1})+\sum_{i\in S_2}(x_i+x_{i+1}-1).
\end{multline}
Now let's write down $\phi(T^*)$ and $\phi(T)$, and then $\phi(T)-\phi(T^*)$. We are done once we
are convinced that $\phi(T)-\phi(T^*)>0$. The contributions to $\phi(T^*)$ are as follows:
\begin{itemize}
\item An $i\in T^*$ contributes $x_i+x_{i+1}+x_n-1$.
\item An $i\in S_0$ contributes $0=x_i+x_{i+1}+x_n-1+(1-x_i-x_{i+1}-x_n)$.
\item An $i\in S_2$ contributes $2x_i+2x_{i+1}+2x_n-3=x_i+x_{i+1}+x_n-1+(x_i+x_{i+1}+x_n-2)$.
\item An even $i\in[n-1]$ contributes $\max\{0,x_i+x_i-1\}=\max\{1,x_i+x_{i+1}\}-1$.
\end{itemize}
We obtain
\begin{multline*}
  \phi(T^*)=\sum_{\text{odd }i\in[n-1]}(x_i+x_{i+1}+x_n-1)+\sum_{\text{even
    }i\in[n-1]}\left(\max\{1,x_i+x_{i+1}\}-1\right)\\
  +\sum_{i\in S_0}(1-x_i-x_{i+1}-x_n)+\sum_{i\in S_2}(x_i+x_{i+1}+x_n-2),
\end{multline*}
which simplifies to
\begin{multline*}
  \phi(T^*)=\sum_{i\in[n-1]}x_i+\frac{n-1}{2}(x_n-2)+\sum_{\text{even
    }i\in[n-1]}\max\{1,x_i+x_{i+1}\}\\
  +\sum_{i\in S_0}(1-x_i-x_{i+1}-x_n)+\sum_{i\in S_2}(x_i+x_{i+1}+x_n-2).
\end{multline*}
Similarly,
\begin{multline*}
  \phi(T)=\sum_{i\in[n-1]}x_i+\frac{n-1}{2}(x_n-2)+\sum_{\text{odd
    }i\in[n-1]}\max\{1,x_i+x_{i+1}\}\\
  +\sum_{i\in S'_0}(1-x_i-x_{i+1}-x_n)+\sum_{i\in S'_2}(x_i+x_{i+1}+x_n-2),
\end{multline*}
where
\begin{align*}
S'_0 &= \{i\in[n-1]\,:\,i\text{ even and }x_i+x_{i+1}+x_n<1\}, & S_2 &= \{i\in[n-1]\,:\,i\text{ even and }x_i+x_{i+1}+x_n>2\}.   
\end{align*}
As a consequence,
\begin{multline*}
  \phi(T)-\phi(T^*)=\sum_{\text{odd
    }i\in[n-1]}\max\{1,x_i+x_{i+1}\}-\sum_{\text{even
    }i\in[n-1]}\max\{1,x_i+x_{i+1}\}\\ +\sum_{i\in S'_0}(1-x_i-x_{i+1}-x_n)+\sum_{i\in S'_2}(x_i+x_{i+1}+x_n-2)-\sum_{i\in S_0}(1-x_i-x_{i+1}-x_n)-\sum_{i\in S_2}(x_i+x_{i+1}+x_n-2).
\end{multline*}
Using~(\ref{eq:cycle_cost}) this can be rewritten as
\[\phi(T)-\phi(T^*)=-\cost(C)+\sum_{i\in S'_0}(1-x_i-x_{i+1}-x_n)+\sum_{i\in
    S'_2}(x_i+x_{i+1}+x_n-2)+\lvert S_0\rvert x_n+\lvert S_2\rvert(1-x_n).\]
This implies the required inequality $\phi(T)-\phi(T^*)>0$ because $-\cost(C)>0$ and the remaining
terms on the right hand side are non-negative.
  \subsection{Case 2} $C=(O,i,i+1,\dots,j,j+1,O)$. The assumption that $C$ is a shortest cycle of
  negative cost leads to some restrictions on $i$ and $j$ as it implies
  $\cost(O,i)+\cost(i,i+1)<\cost(O,i+1)$ and $\cost(j,j+1)+\cost(j+1,O)<\cost(j,O)$.
  \begin{lemma}\label{lem:case_3_possible_starts_1}
    $\{i,i+1\}\cap T^*\neq\emptyset$.
  \end{lemma}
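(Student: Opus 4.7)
The plan is to prove the lemma by contradiction: assuming $\{i,i+1\}\cap T^*=\emptyset$, I will violate the strict minimality inequality $\cost(O,i)+\cost(i,i+1)<\cost(O,i+1)$ that follows from $C$ being a shortest negative-cost cycle (otherwise $C$ could be shortened by replacing the two arcs $(O,i)$, $(i,i+1)$ by the single arc $(O,i+1)$ without raising the cost, producing a shorter cycle of cost at most $\cost(C)<0$). The argument splits by the parity of $i$.

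When $i$ is odd, the assumption that neither $i$ nor $i+1$ lies in $T^*$ forces $i+1\notin T^*\cup(T^*+1)$, so the cost definitions from Section~\ref{sec:wheels} give $\cost(O,i)=M_i$, $\cost(i,i+1)=-m'_i$, and $\cost(O,i+1)=-M_{i+1}$. The desired contradiction $\cost(O,i)+\cost(i,i+1)\geq\cost(O,i+1)$ therefore reduces to $M_i+M_{i+1}\geq m'_i$, and I would verify this by splitting on whether $x_i\leq 1-x_n$, $x_{i+1}\leq 1-x_n$, and $x_i+x_{i+1}\leq 1$. In each of the resulting regions the inequality is immediate from the definitions of $M_k$ and $m'_k$, and no use of Property~4 of $T^*$ is needed.

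When $i$ is even, the arc $(i,i+1)$ carries cost $M'_i$ (because $i$ is even and $i\notin T^*$), while $\cost(O,i+1)=M_{i+1}$ (because $i+1$ is odd, regardless of its $T^*$-status). I would further split on whether $i-1\in T^*$. If $i-1\notin T^*$, then combined with $i,i+1\notin T^*$ and Property~4 of $T^*$, the quantity $x_i+x_{i+1}+x_n$ must lie outside $[1,2]$; in either remaining range direct substitution gives $\cost(O,i)+\cost(i,i+1)\geq\cost(O,i+1)$, because the inequality reduces to $x_i+x_{i+1}+x_n\leq 1$ in the range $<1$ and to $x_i+x_{i+1}+x_n\geq 2$ in the range $>2$, both of which hold. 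If instead $i-1\in T^*$, then $\cost(O,i)=-m_i$ and the required inequality becomes $M'_i-m_i\geq M_{i+1}$, which follows from a routine case split on whether $x_i\gtrless x_n$ and $x_{i+1}\gtrless 1-x_n$ using the definitions of $M'_i$, $m_i$, $M_{i+1}$.

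The main obstacle is bookkeeping rather than any deep idea: the four piecewise-linear quantities $M_k$, $m_k$, $M'_k$, $m'_k$ each have two pieces, so a handful of parity/region combinations must be checked. The one structural ingredient worth highlighting is that Property~4 of $T^*$ rules out precisely the middle range $x_i+x_{i+1}+x_n\in[1,2]$ in the $i$-even, $i-1\notin T^*$ subcase, which would otherwise fail to yield the required inequality.
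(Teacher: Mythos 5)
Your proposal matches the paper's proof: both argue by contradiction from the shortest-cycle inequality $\cost(O,i)+\cost(i,i+1)\geq\cost(O,i+1)$, split into the same three cases ($i$ odd; $i$ even with $i\in T^*+1$; $i$ even with $i\notin T^*+1$), invoke Property~4 of $T^*$ only in the last of these to exclude $x_i+x_{i+1}+x_n\in[1,2]$, and reduce each case to the same elementary piecewise-linear inequalities in $M_k$, $m_k$, $M'_k$, $m'_k$. The argument is correct as outlined.
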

  \begin{proof}
    Suppose $\{i,i+1\}\cap T^*=\emptyset$. We distinguish the three cases shown in Figure~\ref{fig:case_3_possible_starts_1}.
    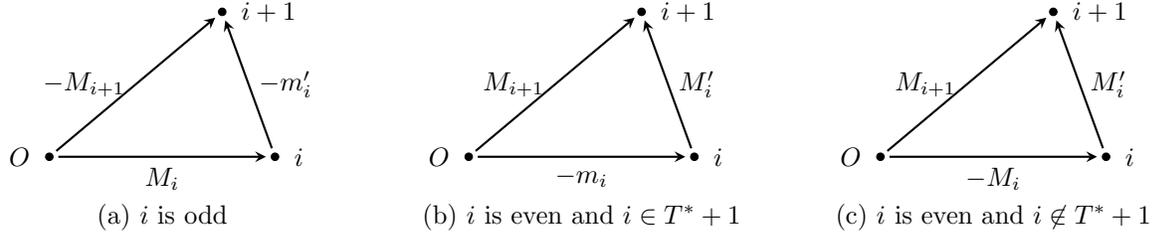
\begin{figure}[htb]
      \begin{minipage}{.32\linewidth}
        \begin{tikzpicture}
          \node[draw,fill,circle,outer sep=2pt,inner sep=1pt,label={left:{\small $O$}}] (O) at (0,0) {};
          \node[draw,fill,circle,outer sep=2pt,inner sep=1pt,label={right:{\small $i$}}]
          (i) at (0:3) {};
          \node[draw,fill,circle,outer sep=2pt,inner sep=1pt,label={right:{\small $i+1$}}]
          (j) at (40:3) {};
          \draw[thick,->] (O) to node[below] {{\small $M_i$}} (i);
          \draw[thick,->] (i) to node[right] {{\small $-m'_i$}} (j);
          \draw[thick,->] (O) to node[left] {{\small $-M_{i+1}$}} (j);
          \node at (1.5,-.8) {{\small (a) $i$ is odd}};
        \end{tikzpicture}
      \end{minipage}
      \begin{minipage}{.32\linewidth}
        \begin{tikzpicture}
          \node[draw,fill,circle,outer sep=2pt,inner sep=1pt,label={left:{\small $O$}}] (O) at (0,0) {};
          \node[draw,fill,circle,outer sep=2pt,inner sep=1pt,label={right:{\small $i$}}]
          (i) at (0:3) {};
          \node[draw,fill,circle,outer sep=2pt,inner sep=1pt,label={right:{\small $i+1$}}]
          (j) at (40:3) {};
          \draw[thick,->] (O) to node[below] {{\small $-m_i$}} (i);
          \draw[thick,->] (i) to node[right] {{\small $M'_i$}} (j);
          \draw[thick,->] (O) to node[left] {{\small $M_{i+1}$}} (j);
          \node at (1.5,-.8) {{\small (b) $i$ is even and $i\in T^*+1$}};
        \end{tikzpicture}
      \end{minipage}
      \begin{minipage}{.32\linewidth}
        \begin{tikzpicture}
          \node[draw,fill,circle,outer sep=2pt,inner sep=1pt,label={left:{\small $O$}}] (O) at (0,0) {};
          \node[draw,fill,circle,outer sep=2pt,inner sep=1pt,label={right:{\small $i$}}]
          (i) at (0:3) {};
          \node[draw,fill,circle,outer sep=2pt,inner sep=1pt,label={right:{\small $i+1$}}]
          (j) at (40:3) {};
          \draw[thick,->] (O) to node[below] {{\small $-M_i$}} (i);
          \draw[thick,->] (i) to node[right] {{\small $M'_i$}} (j);
          \draw[thick,->] (O) to node[left] {{\small $M_{i+1}$}} (j);
          \node at (1.5,-.8) {{\small (c) $i$ is even and $i\not\in T^*+1$}};
        \end{tikzpicture}
      \end{minipage}
      \caption{The three cases in the proof of Lemma~\ref{lem:case_3_possible_starts_1}.}
      \label{fig:case_3_possible_starts_1}
    \end{figure}
    \begin{description}
    \item[Case (a)] $i$ is odd. We need to show that
      \[\min\{x_i,1-x_n\}-\min\{1,x_i+x_{i+1}\}+x_n\geq-\min\{x_{i+1},1-x_n\}=\max\{-x_{i+1},x_n-1\}.\]
      \begin{itemize}
      \item If $x_i+x_n\leq 1$ then the left hand side is $x_i-\min\{1,x_i+x_{i+1}\}+x_n\geq
        x_n-x_{i+1}$.
      \item If $x_i+x_n> 1$ then the left hand side is $1-\min\{1,x_i+x_{i+1}\}\geq 0$.
      \end{itemize}
    \item[Case (b)] $i$ is even and $i\in T^*+1$. We need to show that
      \[-\max\{0,x_i-x_n\}+\max\{1,x_i+x_{i+1}\}-x_n\geq\min\{x_{i+1},1-x_n\}.\]
      \begin{itemize}
      \item If $x_i\leq x_n$ then the left hand side is
        $\max\{1,x_i+x_{i+1}\}-x_n\geq 1-x_n$.
      \item If $x_i>x_n$ then the left hand side is $\max\{1,x_i+x_{i+1}\}-x_i\geq x_{i+1}$.
      \end{itemize}
    \item[Case (c)] $i$ is even and $i\not\in T^*+1$. We need to show that
      \[-\min\{x_i,1-x_n\}+\max\{1,x_i+x_{i+1}\}-x_n\geq\min\{x_{i+1},1-x_n\}.\]
      From $T^*\cap\{i-1,i,i+1\}=\emptyset$ it follows that $x_i+x_{i+1}+x_n<1$ or $x_i+x_{i+1}+x_n>2$.
      \begin{itemize}
      \item If $x_i+x_{i+1}+x_n<1$ then the left hand side is $1-x_i-x_n\geq x_{i+1}$.
      \item If $x_i+x_{i+1}+x_m>2$ then the left hand side is $x_i+x_{i+1}-1\geq 1-x_n$.\qedhere
      \end{itemize}
    \end{description}
  \end{proof}
  \begin{lemma}\label{lem:case_3_possible_starts_2}\hfill
    \begin{enumerate}
    \item If $i\in T^*$ then $i$ is odd and either $x_i+\max\{x_{i+1},x_n\}<1$ or
      $x_i+\min\{x_{i+1},x_n\}>1$.
    \item If $i+1\in T^*$ then $i$ is even and $i-1\not\in T^*$. 
    \end{enumerate}
  \end{lemma}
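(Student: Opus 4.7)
The plan is to exploit the minimality of $C$ at its initial vertex, together with the explicit arc-cost formulas from Figures~\ref{fig:flow_network_arcs_1}--\ref{fig:flow_network_arcs_3}, and to derive both conclusions by direct case analysis on the thresholds appearing in the definitions of $m_i$, $M_i$, $m_i'$, $M_i'$.

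For the parity claims I would first observe that $C$ traverses the arc $(i,i+1)$, and that for $i\in T^*$ the only $\sigma_i^-$-arc is $(i+1,i)$ when $i$ is even and $(i,i+1)$ when $i$ is odd; hence $i\in T^*$ immediately forces $i$ to be odd, which is the parity half of~(1). Likewise, when $j>i$ the cycle uses the arc $(i+1,i+2)$, and this arc exists for $i+1\in T^*$ only when $i+1$ is odd, giving $i$ even and thus the parity half of~(2) in that sub-case. The residual sub-case $j=i$ must be handled separately, since both arcs $(O,i)$ and $(i+1,O)$ exist irrespective of parity. For this sub-case I would compute $\cost(C)=M_i-m_i'+M_{i+1}$ directly, under the working hypothesis that $i+1\in T^*$ is even, and verify on each of the four regions determined by $x_i\leq 1-x_n$ versus $x_i>1-x_n$ and $x_{i+1}\leq 1-x_n$ versus $x_{i+1}>1-x_n$ that $\cost(C)\geq 0$, contradicting the negativity of $C$.

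For the inequality half of~(1), the key ingredient is the shortcut inequality
\[\cost(O,i)+\cost(i,i+1)<\cost(O,i+1),\]
which minimality forces, because deleting the arcs $(O,i)$, $(i,i+1)$ and inserting $(O,i+1)$ produces a cycle with one fewer arc and hence non-negative cost. Substituting the costs for $i$ odd in $T^*$ and $i+1$ even in $T^*+1$ transforms this into $M_i-M_i'+m_{i+1}<0$. I would split on the four combinations $x_i\leq 1-x_n$ versus $x_i>1-x_n$ and $x_i+x_{i+1}\leq 1$ versus $x_i+x_{i+1}>1$: in two of them the inequality is infeasible, in the branch where $x_i\leq 1-x_n$ the surviving condition is $x_i+x_n<1$ together with $x_i+x_{i+1}<1$ (yielding $x_i+\max\{x_{i+1},x_n\}<1$), and in the branch where $x_i>1-x_n$ the surviving condition is $x_i+x_n>1$ together with $x_i+x_{i+1}>1$ (yielding $x_i+\min\{x_{i+1},x_n\}>1$). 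For $j=i$ the same dichotomy drops out of $\cost(C)=M_i-M_i'+M_{i+1}<0$; the only extra branch that could survive would force $x_i+x_{i+1}+x_n>2$, which is ruled out by the standing assumption $x_i+x_{i+1}+x_n\leq 2$ for $i\in T^*$.

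For the remaining claim $i-1\notin T^*$ in part~(2) I argue by contradiction. Assuming $i-1\in T^*$ places $i$ in $T^*+1$, so Figure~\ref{fig:flow_network_arcs_2} gives $\cost(O,i)=-m_i$; since $i$ is even and $i\notin T^*$ we still have $\cost(i,i+1)=M_i'$, and since $i+1\in T^*$ is odd we have $\cost(O,i+1)=M_{i+1}$. The shortcut inequality then reads $-m_i+M_i'<M_{i+1}$. I would dispose of each of the four sub-cases determined by $x_i\leq x_n$ versus $x_i>x_n$ and $x_i+x_{i+1}\leq 1$ versus $x_i+x_{i+1}>1$ by showing that the inequality, combined with the definitions of the maxima and minima, produces incompatible comparisons among $x_i$, $x_{i+1}$, $x_n$ (for instance, one branch simultaneously demands $x_i\leq x_n$, $x_{i+1}\leq 1-x_n$ and $x_i+x_{i+1}>1$, which is inconsistent). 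Finally, in the sub-case $j=i$ I would compute $\cost(C)=-m_i+M_i'-m_{i+1}$ and verify on the four regions determined by the comparisons of $x_i$ and $x_{i+1}$ with $x_n$ that $\cost(C)\geq x_n\geq 0$, again contradicting negativity.

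The main obstacle is purely the volume of bookkeeping. The four quantities $m_i$, $M_i$, $m_i'$, $M_i'$ are piecewise-linear in $x_i$, $x_{i+1}$, $x_n$, so each of the inequalities above unfolds into up to eight sub-cases, and the contradictions typically arise from combining two almost-vacuous linear inequalities to force a strict order on $x_i$ and $x_n$ that conflicts with another sub-case hypothesis. Making sure that no branch is overlooked, and that each branch that \emph{does} satisfy the shortcut inequality really does yield one of the two alternatives stated in~(1), is the delicate part.
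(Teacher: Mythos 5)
Your proposal follows essentially the same route as the paper: parity from arc existence, the forced triangle $j=i$ when $i$ is odd and $i+1\in T^*$, and the shortcut inequality $\cost(O,i)+\cost(i,i+1)<\cost(O,i+1)$ extracted from the minimality of $C$, resolved by the same case splits on the thresholds in $m_i$, $M_i$, $m_i'$, $M_i'$. The only blemish is the intermediate claim $\cost(C)\geq x_n$ in your last $j=i$ sub-case, which fails for instance when $x_i=x_{i+1}=0.1$ and $x_n=0.9$ (then $\cost(C)=0.1$); however only $\cost(C)\geq 0$ is needed and that does hold in all four regions, and that sub-case is in any event already covered by the shortcut argument because the resulting $2$-cycle $(O,i+1,O)$ has cost $M_{i+1}-m_{i+1}\geq 0$.
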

  \begin{proof}
    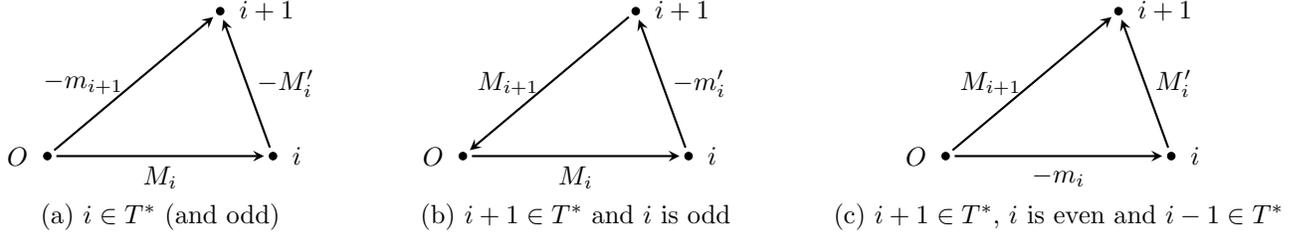
\begin{figure}[htb]
      \centering
      \begin{minipage}{.32\linewidth}
        \begin{tikzpicture}
          \node[draw,fill,circle,outer sep=2pt,inner sep=1pt,label={left:{\small $O$}}] (O) at (0,0) {};
          \node[draw,fill,circle,outer sep=2pt,inner sep=1pt,label={right:{\small $i$}}]
          (i) at (0:3) {};
          \node[draw,fill,circle,outer sep=2pt,inner sep=1pt,label={right:{\small $i+1$}}]
          (j) at (40:3) {};
          \draw[thick,->] (O) to node[below] {{\small $M_i$}} (i);
          \draw[thick,->] (i) to node[right] {{\small $-M'_i$}} (j);
          \draw[thick,->] (O) to node[left] {{\small $-m_{i+1}$}} (j);
          \node at (1.5,-.8) {{\small (a) $i\in T^*$ (and odd)}};
        \end{tikzpicture}
      \end{minipage}
      \begin{minipage}{.32\linewidth}
        \begin{tikzpicture}
          \node[draw,fill,circle,outer sep=2pt,inner sep=1pt,label={left:{\small $O$}}] (O) at (0,0) {};
          \node[draw,fill,circle,outer sep=2pt,inner sep=1pt,label={right:{\small $i$}}]
          (i) at (0:3) {};
          \node[draw,fill,circle,outer sep=2pt,inner sep=1pt,label={right:{\small $i+1$}}]
          (j) at (40:3) {};
          \draw[thick,->] (O) to node[below] {{\small $M_i$}} (i);
          \draw[thick,->] (i) to node[right] {{\small $-m'_i$}} (j);
          \draw[thick,<-] (O) to node[left] {{\small $M_{i+1}$}} (j);
          \node at (1.5,-.8) {{\small (b) $i+1\in T^*$ and $i$ is odd}};
        \end{tikzpicture}
      \end{minipage}
      \begin{minipage}{.32\linewidth}
        \begin{tikzpicture}
          \node[draw,fill,circle,outer sep=2pt,inner sep=1pt,label={left:{\small $O$}}] (O) at (0,0) {};
          \node[draw,fill,circle,outer sep=2pt,inner sep=1pt,label={right:{\small $i$}}]
          (i) at (0:3) {};
          \node[draw,fill,circle,outer sep=2pt,inner sep=1pt,label={right:{\small $i+1$}}]
          (j) at (40:3) {};
          \draw[thick,->] (O) to node[below] {{\small $-m_i$}} (i);
          \draw[thick,->] (i) to node[right] {{\small $M'_i$}} (j);
          \draw[thick,->] (O) to node[left] {{\small $M_{i+1}$}} (j);
          \node at (1.5,-.8) {{\small (c) $i+1\in T^*$, $i$ is even and $i-1\in T^*$}};
        \end{tikzpicture}
      \end{minipage}
      \caption{Illustrations for the proof of Lemma~\ref{lem:case_3_possible_starts_2}}
      \label{fig:case_3_possible_starts_2}
    \end{figure}
    \begin{enumerate}
    \item If $i\in  T^*$ then $i$ has to be odd, since otherwise the arc $(i,i+1)$ does not exist.
      Assume that
    $x_i+\max\{x_{i+1},x_n\}\geq 1$ and $x_i+\min\{x_{i+1},x_n\}\leq 1$. We have to show that
    \[\min\{x_i,1-x_n\}-\max\{1,x_{i}+x_{i+1}\}+x_n\geq
      -\max\{0,x_{i+1}-x_n\}=\min\{0,x_n-x_{i+1}\}.\]
    \begin{itemize}
    \item If $x_i+x_{i+1}\geq 1$ and $x_i+x_n\leq 1$, then the left hand side is $x_n-x_{i+1}$.
    \item If $x_i+x_{i+1}\leq 1$ and $x_i+x_n\geq 1$, then the left hand side is $0$.
    \end{itemize}
    \item Suppose $i+1\in T^*$. If $i$ is odd, then $i+1$ is even and there is no arc $(i+1,i+2)$. This forces $j=i$, so that
      the cycle $C$ is the triangle shown in Figure~\ref{fig:case_3_possible_starts_2}(b). Then
      \[\cost(C)=\min\{x_i,1-x_n\}-\min\{1,x_i+x_{i+1}\}+x_n+\min\{x_{i+1},1-x_n\}.\]
      To see that this cannot be negative we verify
      \[\min\{x_i,1-x_n\}+\min\{x_{i+1},1-x_n\}\geq \min\{1,x_i+x_{i+1}\}-x_n.\]
      Consequently, we can assume that $i$ is even. In order to rule out $i\in T^*+1$, see
      Figure~\ref{fig:case_3_possible_starts_2}(c), we verify
      \[-\max\{0,x_i-x_n\}+\max\{1,x_i+x_{i+1}\}-x_n\geq\min\{x_{i+1},1-x_n\}.\]
      \begin{itemize}
      \item If $x_i\leq x_n$ then the left hand side is $\max\{1,x_i+x_{i+1}\}-x_n\geq 1-x_n$.
      \item If $x_i>x_n$ then the left hand side is $\max\{1,x_i+x_{i+1}\}-x_i\geq x_{i+1}$. \qedhere 
      \end{itemize}
    \end{enumerate}    
  \end{proof}
  \begin{lemma}\label{lem:case_3_possible_ends_1}
    $j+1\not\in T^*$
  \end{lemma}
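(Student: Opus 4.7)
The plan is to argue by contradiction, analogously to Lemma~\ref{lem:case_3_possible_starts_1}: assume $j+1 \in T^*$ and show that the two-arc tail $(j, j+1, O)$ of $C$ is at least as costly as the single arc $(j, O)$, which does exist in $N(\vect x, T^*)$. The cycle $(O, i, i+1, \dots, j, O)$ obtained by the shortcut then has cost $\leq \cost(C) < 0$ and strictly fewer arcs, contradicting the minimality of $C$; in the borderline case $j = i$ the shortened cycle is $(O, i, O)$, and the bound $\cost(O, i) + \cost(i, O) \geq 0$ (immediate from the definitions of $m_i, M_i$) still gives a contradiction with $\cost(C) < 0$. The hypothesis $j+1 \in T^*$, together with $T^* \cap (T^*+1) = \emptyset$, already forces $j \notin T^*$; what remains is to split on the parity of $j+1$ and, when needed, on whether $j \in T^*+1$.

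If $j+1$ is odd (hence $j$ is even), reading off Figures~\ref{fig:flow_network_arcs_1} and~\ref{fig:flow_network_arcs_3} gives $\cost(j, j+1) = M'_j$, $\cost(j+1, O) = -m_{j+1}$, and $\cost(j, O) = M_j$ (the last one holds regardless of whether $j \in T^*+1$, since $j$ is even). The inequality I need to check is
\[
\max\{1, x_j + x_{j+1}\} - x_n - \max\{0, x_{j+1} - x_n\} \geq \min\{x_j, 1 - x_n\},
\]
handled by the four sub-cases according to the signs of $x_j + x_{j+1} - 1$ and $x_{j+1} - x_n$. Each sub-case is elementary: for instance, when $x_j + x_{j+1} > 1$ and $x_{j+1} > x_n$ the LHS simplifies to $x_j$, giving the bound trivially; when $x_j + x_{j+1} \leq 1$ and $x_{j+1} > x_n$ it becomes $1 - x_{j+1} \geq x_j$, again immediate; and the two remaining sub-cases either reduce to $1 - x_n \geq \min\{x_j, 1-x_n\}$ or follow from the inequality $x_j + x_{j+1} \geq 1$ after eliminating the impossible sub-sub-case $x_j \leq 1 - x_n$, $x_{j+1} < x_n$, $x_j + x_{j+1} > 1$.

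If $j+1$ is even (hence $j$ is odd, $j \notin T^*$), then $\cost(j, j+1) = -m'_j$, $\cost(j+1, O) = M_{j+1}$, and $\cost(j, O)$ equals either $-m_j$ (if $j \in T^*+1$) or $-M_j$ (otherwise). Since $m_j \leq M_j$ pointwise, it suffices to prove the stronger of the two resulting inequalities, namely $-m'_j + M_{j+1} \geq -m_j$, which unpacks to
\[
\min\{x_{j+1}, 1 - x_n\} + \max\{0, x_j - x_n\} + x_n \geq \min\{1, x_j + x_{j+1}\}.
\]
A case split on $x_j \lessgtr x_n$ and $x_{j+1} \lessgtr 1 - x_n$ disposes of this by direct verification; the only branch that requires a short thought is $x_j \leq x_n$, $x_{j+1} \leq 1 - x_n$, $x_j + x_{j+1} > 1$, where LHS $= x_{j+1} + x_n \geq 1$ follows from combining the first and third conditions. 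The main obstacle is not conceptual but purely bookkeeping: matching parities and $T^*$/$(T^*+1)$-membership to the correct arcs in Figures~\ref{fig:flow_network_arcs_1}--\ref{fig:flow_network_arcs_3}, and then patiently enumerating the elementary sub-cases.
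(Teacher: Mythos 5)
Your proposal is correct and follows essentially the same route as the paper: both exploit the minimality of $C$ to reduce the claim to the arc-cost inequality $\cost(j,j+1)+\cost(j+1,O)\geq\cost(j,O)$, and then verify it by the same parity/$T^*$-membership case split (your consolidation of the two odd-$j$ subcases via $-m_j\geq -M_j$ is a minor streamlining the paper also uses in the companion Lemma~\ref{lem:case_3_possible_ends_2}). The arc costs you read off the network and the elementary inequalities you check match the paper's Cases (a)--(c) exactly.
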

  \begin{proof}
    Suppose $j+1\in T^*$. We distinguish the three cases indicated in Figure~\ref{fig:case_3_possible_ends_1}.
    \begin{figure}[htb]
      \begin{minipage}{.32\linewidth}
        \begin{tikzpicture}
          \node[draw,fill,circle,outer sep=2pt,inner sep=1pt,label={left:{\small $O$}}] (O) at (0,0) {};
          \node[draw,fill,circle,outer sep=2pt,inner sep=1pt,label={right:{\small $j$}}]
          (i) at (0:3) {};
          \node[draw,fill,circle,outer sep=2pt,inner sep=1pt,label={right:{\small $j+1$}}]
          (j) at (40:3) {};
          \draw[thick,<-] (O) to node[below] {{\small $M_j$}} (i);
          \draw[thick,->] (i) to node[right] {{\small $M'_j$}} (j);
          \draw[thick,<-] (O) to node[left] {{\small $-m_{j+1}$}} (j);
          \node at (1.5,-.8) {{\small (a) $j$ is even}};
        \end{tikzpicture}
      \end{minipage}
      \begin{minipage}{.32\linewidth}
        \begin{tikzpicture}
          \node[draw,fill,circle,outer sep=2pt,inner sep=1pt,label={left:{\small $O$}}] (O) at (0,0) {};
          \node[draw,fill,circle,outer sep=2pt,inner sep=1pt,label={right:{\small $j$}}]
          (i) at (0:3) {};
          \node[draw,fill,circle,outer sep=2pt,inner sep=1pt,label={right:{\small $j+1$}}]
          (j) at (40:3) {};
          \draw[thick,<-] (O) to node[below] {{\small $-m_j$}} (i);
          \draw[thick,->] (i) to node[right] {{\small $-m'_j$}} (j);
          \draw[thick,<-] (O) to node[left] {{\small $M_{j+1}$}} (j);
          \node at (1.5,-.8) {{\small (b) $j$ is odd and $j\in T^*+1$}};
        \end{tikzpicture}
      \end{minipage}
      \begin{minipage}{.32\linewidth}
        \begin{tikzpicture}
          \node[draw,fill,circle,outer sep=2pt,inner sep=1pt,label={left:{\small $O$}}] (O) at (0,0) {};
          \node[draw,fill,circle,outer sep=2pt,inner sep=1pt,label={right:{\small $j$}}]
          (i) at (0:3) {};
          \node[draw,fill,circle,outer sep=2pt,inner sep=1pt,label={right:{\small $j+1$}}]
          (j) at (40:3) {};
          \draw[thick,<-] (O) to node[below] {{\small $-M_j$}} (i);
          \draw[thick,->] (i) to node[right] {{\small $-m'_j$}} (j);
          \draw[thick,<-] (O) to node[left] {{\small $M_{j+1}$}} (j);
          \node at (1.5,-.8) {{\small (c) $j$ is odd and $j\not\in T^*+1$}};
        \end{tikzpicture}
      \end{minipage}
      \caption{The three cases in the proof of Lemma~\ref{lem:case_3_possible_ends_1}.}
      \label{fig:case_3_possible_ends_1}
    \end{figure}
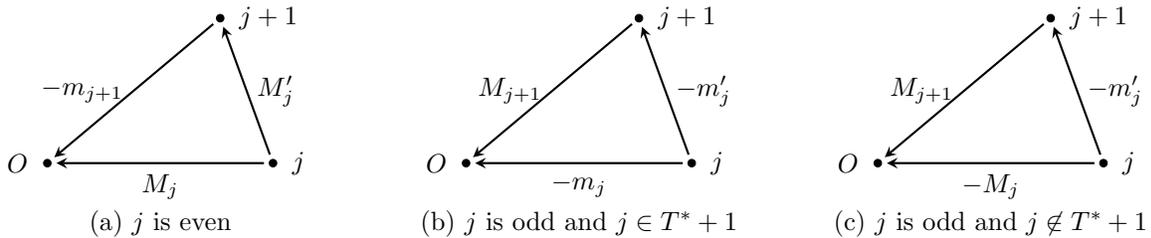
    \begin{description}
    \item[Case (a)] $j$ is even. We need to show that
      \[\max\{1,x_j+x_{j+1}\}-x_n-\max\{0,x_{j+1}-x_n\}\geq\min\{x_j,1-x_n\}.\]
      \begin{itemize}
      \item If $x_{j+1}\leq x_n$ then the left hand side is $\max\{1,x_j+x_{j+1}\}-x_n\geq 1-x_n$.
      \item If $x_{j+1}> x_n$ then the left hand side is $\max\{1,x_j+x_{j+1}\}-x_{j+1}\geq x_j$.
      \end{itemize}
    \item[Case (b)] $j$ is odd and $j\in T^*+1$. We need to show that
      \[-\min\{1,x_j+x_{j+1}\}+x_n+\min\{x_{j+1},1-x_n\}\geq-\max\{0,x_j-x_n\}=\min\{0,x_n-x_j\}.\]
      \begin{itemize}
      \item If $x_{j+1}+x_n\leq 1$ then the left hand side is $x_n+x_{j+1}-\min\{1,x_j+x_{j+1}\}\geq
        x_n-x_j$.
      \item If $x_{j+1}+x_n> 1$ then the left hand side is $1-\min\{1,x_j+x_{j+1}\}\geq 0$. 
      \end{itemize}
    \item[Case (c)] $j$ is odd and $j\not\in T^*+1$. We need to show that
      \[-\min\{1,x_j+x_{j+1}\}+x_n+\min\{x_{j+1},1-x_n\}\geq-\min\{x_j,1-x_n\}=\max\{-x_j,x_n-1\}.\]
      \begin{itemize}
      \item If $x_{j+1}+x_n\leq 1$ then the left hand side is $x_n+x_{j+1}-\min\{1,x_j+x_{j+1}\}\geq
        x_n-x_j$.
      \item If $x_{j+1}+x_n> 1$ then the left hand side is $1-\min\{1,x_j+x_{j+1}\}\geq 0$.\qedhere
      \end{itemize}
    \end{description}
  \end{proof}
  \begin{lemma}\label{lem:case_3_possible_ends_2}\hfill
    \begin{enumerate}
    \item If $j\in T^*$ then $j$ is odd and either $x_{j+1}+\max\{x_j,x_n\}<1$ or
      $x_{j+1}+\min\{x_j,x_n\}>1$.
    \item If $j\not\in T^*$ then $j$ is even. 
    \end{enumerate}
  \end{lemma}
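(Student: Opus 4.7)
The plan is to mirror the approach used in Lemmas~\ref{lem:case_3_possible_starts_1},~\ref{lem:case_3_possible_starts_2}, and~\ref{lem:case_3_possible_ends_1}: in each part, examine the two tail arcs $(j,j+1)$ and $(j+1,O)$ of $C$, and argue by contradiction. If the stated condition fails, we will show $\cost(j,j+1)+\cost(j+1,O)\geq\cost(j,O)$, so that removing these two arcs and replacing them with the single arc $(j,O)$ produces a cycle with strictly fewer arcs and cost at most $\cost(C)<0$, contradicting the minimality of $C$.

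For part~1, suppose $j\in T^*$. The arc $(j,j+1)$ must exist in $N(\vect x,T^*)$, and inspection of the arc set shows that for $k\in T^*$ this arc is present only when $k$ is odd; this gives the first assertion. The relevant costs are then $\cost(j,j+1)=x_n-\max\{1,x_j+x_{j+1}\}$, $\cost(j+1,O)=\min\{x_{j+1},1-x_n\}$ (since $j+1\in T^*+1$ and $j+1$ is even), and $\cost(j,O)=-\max\{0,x_j-x_n\}$. Assume for contradiction that neither $x_{j+1}+\max\{x_j,x_n\}<1$ nor $x_{j+1}+\min\{x_j,x_n\}>1$ holds, so that the two sums $x_j+x_{j+1}$ and $x_n+x_{j+1}$ straddle $1$. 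The verification splits into the two symmetric subcases $x_j\geq x_n$ and $x_j\leq x_n$, and in each a one-line substitution shows that
\[
\cost(j,j+1)+\cost(j+1,O)-\cost(j,O)=0,
\]
which suffices to execute the shortening argument.

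For part~2, suppose $j\not\in T^*$ and assume for contradiction that $j$ is odd. First, $j\neq i$: otherwise $i=j\not\in T^*$ and $i+1=j+1\not\in T^*$ by Lemma~\ref{lem:case_3_possible_ends_1}, contradicting Lemma~\ref{lem:case_3_possible_starts_1}. So $j>i$ and the arc $(j-1,j)$ lies on $C$. If $j\in T^*+1$, then $j-1\in T^*$ and $j-1$ is even (since $j$ is odd); but for even $k\in T^*$ the network contains only the arc $(k+1,k)$ and not $(k,k+1)$, so $(j-1,j)$ is missing, a contradiction. Hence $j\not\in T^*\cup(T^*+1)$, and combined with Lemma~\ref{lem:case_3_possible_ends_1} we also have $j+1\not\in T^*\cup(T^*+1)$. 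The inequality to verify is then
\[
\min\{x_j,1-x_n\}+\min\{x_{j+1},1-x_n\}+x_n\geq\min\{1,x_j+x_{j+1}\},
\]
which follows by splitting on whether $x_j$ and $x_{j+1}$ exceed $1-x_n$; a short check in each of the four possibilities gives the inequality and hence $\cost(j,j+1)+\cost(j+1,O)\geq\cost(j,O)$, completing the contradiction argument.

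The main obstacle is bookkeeping rather than conceptual difficulty: the cost on each arc depends in a slightly different way on the parity of its endpoint and on whether that endpoint lies in $T^*$, $T^*+1$, or neither, so one must carefully read off the correct cost from the correct diagram before substituting. Once the right cost expressions are in place, the inequalities are elementary manipulations of $\max$ and $\min$ expressions, entirely analogous to those appearing in the proofs of Lemmas~\ref{lem:case_3_possible_starts_1} and~\ref{lem:case_3_possible_starts_2}.
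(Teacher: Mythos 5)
Your proof is correct and follows essentially the same route as the paper: both parts are derived from the minimality of $C$ via the comparison $\cost(j,j+1)+\cost(j+1,O)\geq\cost(j,O)$, and your cost expressions and the resulting case analyses check out (in part 1 the two subcases indeed give equality, and in part 2 the displayed inequality holds in all four min-splittings). The only (harmless) deviation is in part 2, where the paper handles the subcase $j\in T^*+1$ by observing $-m_j\geq-M_j$ and verifying the stronger inequality with right-hand side $-m_j$, whereas you rule that subcase out structurally by noting that the arc $(j-1,j)$ would not exist for even $j-1\in T^*$; both arguments are valid.
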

  \begin{proof}
    \begin{figure}[htb]
      \centering
      \begin{minipage}{.32\linewidth}
        \begin{tikzpicture}
          \node[draw,fill,circle,outer sep=2pt,inner sep=1pt,label={left:{\small $O$}}] (O) at (0,0) {};
          \node[draw,fill,circle,outer sep=2pt,inner sep=1pt,label={right:{\small $j$}}]
          (i) at (0:3) {};
          \node[draw,fill,circle,outer sep=2pt,inner sep=1pt,label={right:{\small $j+1$}}]
          (j) at (40:3) {};
          \draw[thick,<-] (O) to node[below] {{\small $-m_j$}} (i);
          \draw[thick,->] (i) to node[right] {{\small $-M'_j$}} (j);
          \draw[thick,<-] (O) to node[left] {{\small $M_{j+1}$}} (j);
          \node at (1.5,-.8) {{\small (a) $j\in T^*$ (and odd)}};
        \end{tikzpicture}
      \end{minipage}
      \begin{minipage}{.32\linewidth}
        \begin{tikzpicture}
          \node[draw,fill,circle,outer sep=2pt,inner sep=1pt,label={left:{\small $O$}}] (O) at (0,0) {};
          \node[draw,fill,circle,outer sep=2pt,inner sep=1pt,label={right:{\small $j$}}]
          (i) at (0:3) {};
          \node[draw,fill,circle,outer sep=2pt,inner sep=1pt,label={right:{\small $j+1$}}]
          (j) at (40:3) {};
          \draw[thick,<-] (O) to node[below] {{\small $-m_j$}} (i);
          \draw[thick,->] (i) to node[right] {{\small $-m'_j$}} (j);
          \draw[thick,<-] (O) to node[left] {{\small $M_{j+1}$}} (j);
          \node at (1.5,-.8) {{\small (b) $j$ is odd and $j\in T^*+1$}};
        \end{tikzpicture}
      \end{minipage}
      \begin{minipage}{.32\linewidth}
        \begin{tikzpicture}
          \node[draw,fill,circle,outer sep=2pt,inner sep=1pt,label={left:{\small $O$}}] (O) at (0,0) {};
          \node[draw,fill,circle,outer sep=2pt,inner sep=1pt,label={right:{\small $j$}}]
          (i) at (0:3) {};
          \node[draw,fill,circle,outer sep=2pt,inner sep=1pt,label={right:{\small $j+1$}}]
          (j) at (40:3) {};
          \draw[thick,<-] (O) to node[below] {{\small $-M_j$}} (i);
          \draw[thick,->] (i) to node[right] {{\small $-m'_j$}} (j);
          \draw[thick,<-] (O) to node[left] {{\small $M_{j+1}$}} (j);
          \node at (1.5,-.8) {{\small (c) $j$ is odd and $j\not\in T^*\cup(T^*+1)$}};
        \end{tikzpicture}
      \end{minipage}
      \caption{Illustrations for the proof of Lemma~\ref{lem:case_3_possible_ends_2}}
      \label{fig:case_3_possible_ends_2}
    \end{figure}
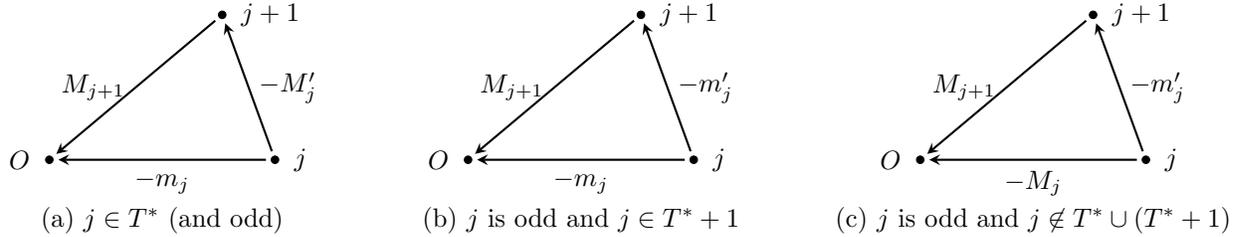
    \begin{enumerate}
    \item If $j\in T^*$ then $j$ must be odd for the arc $(j,j+1)$ to exist. Now suppose
      $x_{j+1}+\max\{x_j,x_n\}\geq 1$ and $x_{j+1}+\min\{x_j,x_n\}\leq 1$. We need to show that (see
      Figure~\ref{fig:case_3_possible_ends_2}(a))
      \[-\max\{1,x_j+x_{j+1}\}+x_n+\min\{x_{j+1},1-x_n\}\geq-\max\{0,x_j-x_n\}=\min\{0,x_n-x_j\}.\]
      \begin{itemize}
      \item If $x_{j+1}+x_j\geq 1$ and $x_{j+1}+x_n\leq 1$ then the left hand side is $x_n-x_j$.
      \item If $x_{j+1}+x_j\leq 1$ and $x_{j+1}+x_n\geq 1$ then the left hand side is $0$. 
      \end{itemize}
    \item Suppose $j\not\in T^*$ and $j$ is odd (see Figure~\ref{fig:case_3_possible_ends_2}(b) and
      (c)). Since $-m_j\geq-M_j$ it is sufficient to verify 
      \[-\min\{1,x_j+x_{j+1}\}+x_n+\min\{x_{j+1},1-x_n\}\geq -\max\{0,x_j-x_n\}=\min\{0,x_n-x_j\}.\]
      \begin{itemize}
      \item If $x_{j+1}+x_n\leq 1$ then the left hand side is $x_n+x_{j+1}-\min\{1,x_j+x_{j+1}\}\geq
        x_n-x_j$.
      \item If $x_{j+1}+x_n> 1$ then the left hand side is $1-\min\{1,x_j+x_{j+1}\}\geq 0$.\qedhere
      \end{itemize}
    \end{enumerate}
  \end{proof}
  By Lemmas~\ref{lem:case_3_possible_starts_1},~\ref{lem:case_3_possible_starts_2},~\ref{lem:case_3_possible_ends_1}
  and \ref{lem:case_3_possible_ends_2} we are left with the following possibilities (illustrated in Figure ):
  \begin{enumerate}
  \item $i$ odd and $j$ odd. Then $i\in T^*$ and $j\in T^*$. 
  \item $i$ odd and $j$ even. Then $i\in T^*$ and $j\not\in T^*$, $j+1\not\in T^*$.
  \item $i$ even and $j$ odd. Then $i+1\in T^*$, $i-1\not\in T^*$ and $j\in T^*$. 
  \item $i$ even and $j$ even. Then $i+1\in T^*$, $i-1\not\in T^*$ and $j\not\in T^*$, $j+1\not\in T^*$.
  \end{enumerate}
  \begin{figure}[htb]
    \begin{minipage}{.49\linewidth}
      \begin{tikzpicture}
          \node[draw,fill,circle,outer sep=2pt,inner sep=1pt,label={left:{\small $O$}}] (O) at (0,0) {};
          \node[draw,fill,circle,outer sep=2pt,inner sep=1pt,label={right:{\small $i$}}]
          (i) at (0:3) {};
          \node[draw,fill,circle,outer sep=2pt,inner sep=1pt,label={30:{\small $i+1$}}]
          (i1) at (30:3) {};
          \node[draw,fill,circle,outer sep=2pt,inner sep=1pt,label={90:{\small $j$}}]
          (j) at (90:3) {};
          \node[draw,fill,circle,outer sep=2pt,inner sep=1pt,label={120:{\small $j+1$}}]
          (j1) at (120:3) {};
          \draw[thick,->] (O) to node[below] {{\small $M_i$}} (i);
          \draw[thick,->] (i) to node[right] {{\small $-M'_i$}} (i1);
          \draw[thick,->] (j) to node[above] {{\small $-M'_j$}} (j1);
          \draw[thick,->] (j1) to node[left] {{\small $M_{j+1}$}} (O);
          \node at (1.5,-.8) {{\small (a) $i$ and $j$ odd.}};
          \draw [thin,dashed,domain=30:90] plot ({3*cos(\x)}, {3*sin(\x)});
        \end{tikzpicture}
      \end{minipage}\hfill
      \begin{minipage}{.49\linewidth}
      \begin{tikzpicture}
          \node[draw,fill,circle,outer sep=2pt,inner sep=1pt,label={left:{\small $O$}}] (O) at (0,0) {};
          \node[draw,fill,circle,outer sep=2pt,inner sep=1pt,label={right:{\small $i$}}]
          (i) at (0:3) {};
          \node[draw,fill,circle,outer sep=2pt,inner sep=1pt,label={30:{\small $i+1$}}]
          (i1) at (30:3) {};
          \node[draw,fill,circle,outer sep=2pt,inner sep=1pt,label={90:{\small $j$}}]
          (j) at (90:3) {};
          \node[draw,fill,circle,outer sep=2pt,inner sep=1pt,label={120:{\small $j+1$}}]
          (j1) at (120:3) {};
          \draw[thick,->] (O) to node[below] {{\small $M_i$}} (i);
          \draw[thick,->] (i) to node[right] {{\small $-M'_i$}} (i1);
          \draw[thick,->] (j) to node[above] {{\small $M'_j$}} (j1);
          \draw[thick,->] (j1) to node[left] {{\small $-M_{j+1}$}} (O);
          \node at (1.5,-.8) {{\small (b) $i$ odd and $j$ even.}};
          \draw [thin,dashed,domain=30:90] plot ({3*cos(\x)}, {3*sin(\x)});
        \end{tikzpicture}
      \end{minipage}
      \begin{minipage}{.49\linewidth}
      \begin{tikzpicture}
          \node[draw,fill,circle,outer sep=2pt,inner sep=1pt,label={left:{\small $O$}}] (O) at (0,0) {};
          \node[draw,fill,circle,outer sep=2pt,inner sep=1pt,label={right:{\small $i$}}]
          (i) at (0:3) {};
          \node[draw,fill,circle,outer sep=2pt,inner sep=1pt,label={30:{\small $i+1$}}]
          (i1) at (30:3) {};
          \node[draw,fill,circle,outer sep=2pt,inner sep=1pt,label={90:{\small $j$}}]
          (j) at (90:3) {};
          \node[draw,fill,circle,outer sep=2pt,inner sep=1pt,label={120:{\small $j+1$}}]
          (j1) at (120:3) {};
          \draw[thick,->] (O) to node[below] {{\small $-M_i$}} (i);
          \draw[thick,->] (i) to node[right] {{\small $M'_i$}} (i1);
          \draw[thick,->] (j) to node[above] {{\small $-M'_j$}} (j1);
          \draw[thick,->] (j1) to node[left] {{\small $M_{j+1}$}} (O);
          \node at (1.5,-.8) {{\small (c) $i$ even and $j$ odd.}};
          \draw [thin,dashed,domain=30:90] plot ({3*cos(\x)}, {3*sin(\x)});
        \end{tikzpicture}
      \end{minipage}\hfill
      \begin{minipage}{.49\linewidth}
      \begin{tikzpicture}
          \node[draw,fill,circle,outer sep=2pt,inner sep=1pt,label={left:{\small $O$}}] (O) at (0,0) {};
          \node[draw,fill,circle,outer sep=2pt,inner sep=1pt,label={right:{\small $i$}}]
          (i) at (0:3) {};
          \node[draw,fill,circle,outer sep=2pt,inner sep=1pt,label={30:{\small $i+1$}}]
          (i1) at (30:3) {};
          \node[draw,fill,circle,outer sep=2pt,inner sep=1pt,label={90:{\small $j$}}]
          (j) at (90:3) {};
          \node[draw,fill,circle,outer sep=2pt,inner sep=1pt,label={120:{\small $j+1$}}]
          (j1) at (120:3) {};
          \draw[thick,->] (O) to node[below] {{\small $-M_i$}} (i);
          \draw[thick,->] (i) to node[right] {{\small $M'_i$}} (i1);
          \draw[thick,->] (j) to node[above] {{\small $M'_j$}} (j1);
          \draw[thick,->] (j1) to node[left] {{\small $-M_{j+1}$}} (O);
          \node at (1.5,-.8) {{\small (d) $i$ and $j$ even.}};
          \draw [thin,dashed,domain=30:90] plot ({3*cos(\x)}, {3*sin(\x)});
        \end{tikzpicture}
    \end{minipage}
    \caption{The four types of cycles in Case 3.}
    \label{fig:types_case_3}
  \end{figure}
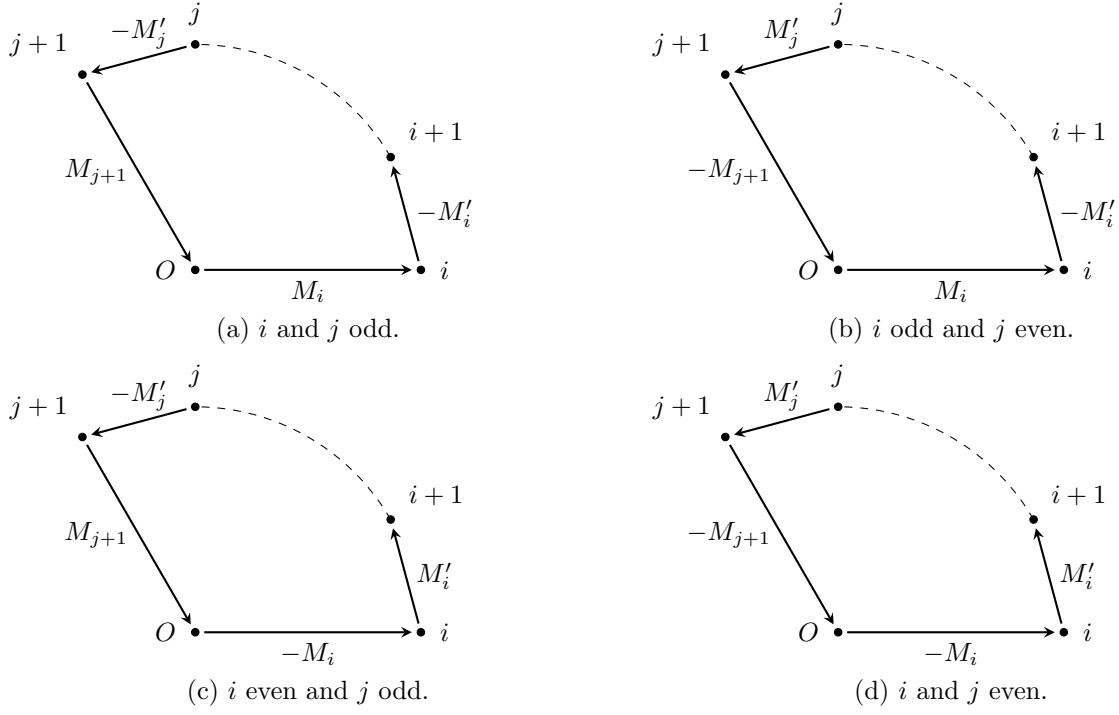
  In every case, the elements of $T^*\cap[i,j]$ are the odd $k\in[i,j]$ with $1\leq
  x_k+x_{k+1}+x_n\leq 2$, and the set
  \[T = \left(T^*\setminus[i,j])\cup\{k\in[i,j]\,:\,k\text{ even  and }1\leq x_k+x_{k+1}+x_n\leq
      2\right\}\]
  is feasible, that is, $T\cap(T+1)=\emptyset$. We will write down $\cost(C)$ and
  $\phi(T)-\phi(T^*)$, and then see that $\cost(C)<0$ implies $\phi(T)>\phi(T^*)$, as required.
  Similar to what we did in the dicussion of Case 1, we introduce the following notation:
  \begin{align*}
    S_0 &= \{k\in[i,j]\,:\,k\text{ odd and }x_k+x_{k+1}+x_n<1\}, & S_2 &= \{k\in[i,j]\,:\,k\text{
                                                                         odd and
                                                                         }x_k+x_{k+1}+x_n>2\},\\
    S'_0 &= \{k\in[i,j]\,:\,k\text{ even and }x_k+x_{k+1}+x_n<1\}, & S'_2 &= \{k\in[i,j]\,:\,k\text{
                                                                         even and
                                                                         }x_k+x_{k+1}+x_n>2\}.    
  \end{align*}
  In all cases the cost is a slight modification of
  \[\cost_0(C)=\sum_{\text{even }k\in[i,j]}\max\{1,x_k+x_{k+1}\}-\sum_{\text{odd
      }k\in[i,j]}\max\{1,x_k+x_{k+1}\}+\sum_{k\in S_0}(1-x_k-x_{k+1})+\sum_{k\in
      S_2}(x_k+x_{k+1}-1).\]
  More precisely,
  \begin{align}
    \cost(C) &= \cost_0(C)+\min\{x_i,1-x_n\}+\min\{x_{j+1},1-x_n\}+x_n &&\text{if both $i$ and $j$
                                                                          are
                                                                          odd},\label{eq:cost_oo}\\
    \cost(C) &= \cost_0(C)+\min\{x_i,1-x_n\}-\min\{x_{j+1},1-x_n\} &&\text{if $i$ is odd and $j$
                                                                          is
                                                                      even},\label{eq:cost_oe}\\
    \cost(C) &= \cost_0(C)-\min\{x_i,1-x_n\}+\min\{x_{j+1},1-x_n\} &&\text{if $i$ is even and $j$
                                                                          is
                                                                      odd},\label{eq:cost_eo}\\
    \cost(C) &= \cost_0(C)-\min\{x_i,1-x_n\}-\min\{x_{j+1},1-x_n\}-x_n &&\text{if both $i$ and $j$
                                                                          are even}.\label{eq:cost_ee}
  \end{align}
  Next we will show that
   \begin{multline}\label{eq:punchline}
    \phi(T)-\phi(T^*)=-\cost(C)+\sum_{k\in S'_0}(1-x_k-x_{k+1}-x_n)+\sum_{k\in
      S'_2}(x_k-x_{k+1}-x_n-2)\\
    +\lvert S_0\rvert x_n+\lvert S_2\rvert(1-x_n),
  \end{multline}
  and this will conclude the proof in Case 3. Let $\Phi_0$ be the common part of $\phi(T^*)$ and $\phi(T)$, that is everything beyond the
  interval $[i,j+1]$. If $i$ and $j$ are both odd, then 
     \begin{multline*}
    \phi(T^*)=\Phi_0+\sum_{k\in[i,j+1]}x_k+x_n(j-i+2)/2-(j-i+1)+\sum_{\text{even
      }k\in[i,j]}\max\{1,x_k+x_{k+1}\}\\
    +\sum_{k\in S_0}(1-x_k-x_{k+1}-x_n)+\sum_{k\in S_2}(x_k+x_{k+1}+x_n-2),
  \end{multline*}
  and
   \begin{multline*}
     \phi(T)=\Phi_0+\max\{0,x_i+x_n-1\}+\max\{0,x_{j+1}+x_n-1\}+\sum_{k\in[i+1,j]}x_k+x_n(j-i)/2-(j-i+1)\\
     +\sum_{\text{odd }k\in[i,j]}\max\{1,x_k+x_{k+1}\}+\sum_{k\in S'_0}(1-x_k-x_{k+1}-x_n)+\sum_{k\in S'_2}(x_k+x_{k+1}+x_n-2).
  \end{multline*}
  Using $\max\{0,x_k+x_n-1\}-x_k=-\min\{x_k,1-x_n\}$ for $k=i$ and $k=j+1$, we obtain 
  \begin{multline*}
    \phi(T)-\phi(T^*)=\sum_{\text{odd }k\in[i,j]}\max\{1,x_k+x_{k+1}\}-\sum_{\text{even
      }k\in[i,j]}\max\{1,x_k+x_{k+1}\}\\
    -\min\{x_i,1-x_n\}-\min\{x_{j+1},1-x_n\}-x_n +\sum_{k\in S'_0}(1-x_k-x_{k+1}-x_n)+\sum_{k\in
      S'_2}(x_k-x_{k+1}-x_n-2)\\
    -\sum_{k\in S_0}(1-x_k-x_{k+1}-x_n)-\sum_{k\in S_2}(x_k-x_{k+1}-x_n-2),
  \end{multline*}
  and with~(\ref{eq:cost_oo}) we obtain~(\ref{eq:punchline}). If $i$ is even and $j$ is odd, then
  the same calculations lead to
  \begin{multline*}
    \phi(T)-\phi(T^*)=\sum_{\text{odd }k\in[i,j]}\max\{1,x_k+x_{k+1}\}-\sum_{\text{even
      }k\in[i,j]}\max\{1,x_k+x_{k+1}\}\\
    -\min\{x_i,1-x_n\}+\min\{x_{j+1},1-x_n\} +\sum_{k\in S'_0}(1-x_k-x_{k+1}-x_n)+\sum_{k\in
      S'_2}(x_k-x_{k+1}-x_n-2)\\
    -\sum_{k\in S_0}(1-x_k-x_{k+1}-x_n)-\sum_{k\in S_2}(x_k-x_{k+1}-x_n-2),
  \end{multline*}
  and with~(\ref{eq:cost_oe}) we obtain~(\ref{eq:punchline}). If $i$ is even and $j$ is odd, then
  \begin{multline*}
    \phi(T)-\phi(T^*)=\sum_{\text{odd }k\in[i,j]}\max\{1,x_k+x_{k+1}\}-\sum_{\text{even
      }k\in[i,j]}\max\{1,x_k+x_{k+1}\}\\
    +\min\{x_i,1-x_n\}-\min\{x_{j+1},1-x_n\}-x_n +\sum_{k\in S'_0}(1-x_k-x_{k+1}-x_n)+\sum_{k\in
      S'_2}(x_k-x_{k+1}-x_n-2)\\
    -\sum_{k\in S_0}(1-x_k-x_{k+1}-x_n)-\sum_{k\in S_2}(x_k-x_{k+1}-x_n-2),
  \end{multline*}
  and with~(\ref{eq:cost_eo}) we obtain~(\ref{eq:punchline}). Finally, if both $i$ and $j$ are even,
  then
  \begin{multline*}
    \phi(T)-\phi(T^*)=\sum_{\text{odd }k\in[i,j]}\max\{1,x_k+x_{k+1}\}-\sum_{\text{even
      }k\in[i,j]}\max\{1,x_k+x_{k+1}\}\\
    +\min\{x_i,1-x_n\}+\min\{x_{j+1},1-x_n\}+x_n +\sum_{k\in S'_0}(1-x_k-x_{k+1}-x_n)+\sum_{k\in
      S'_2}(x_k-x_{k+1}-x_n-2)\\
    -\sum_{k\in S_0}(1-x_k-x_{k+1}-x_n)-\sum_{k\in S_2}(x_k-x_{k+1}-x_n-2),
  \end{multline*}
  and with~(\ref{eq:cost_ee}) we obtain~(\ref{eq:punchline}).
\end{appendix}

\end{document}